\documentclass[11pt]{amsart}

\usepackage{amsmath,amssymb,amscd,amsthm,amsxtra}
\usepackage[dvips]{graphics,epsfig}

\headheight=8pt
\topmargin=0pt
\textheight=624pt
\textwidth=432pt
\oddsidemargin=18pt
\evensidemargin=18pt

\newtheorem{theorem}{Theorem} [section]
\newtheorem{maintheorem}{Theorem}
\newtheorem{lemma}[theorem]{Lemma}
\newtheorem{proposition}[theorem]{Proposition}
\newtheorem{remark}[theorem]{Remark}

\newtheorem{definition}{Definition}
\newtheorem{corollary}[theorem]{Corollary}

\DeclareMathOperator*{\intt}{\int}
\DeclareMathOperator*{\iintt}{\iint}
\DeclareMathOperator*{\supp}{supp}
\DeclareMathOperator{\med}{med}

\DeclareMathOperator*{\Area}{Area}
\DeclareMathOperator{\MAX}{MAX}

\newcommand{\Z}{\mathbb{Z}}
\newcommand{\R}{\mathbb{R}}

\newcommand{\T}{\mathbb{T}}

\newcommand{\al}{\alpha}
\newcommand{\dl}{\delta}

\newcommand{\eps}{\varepsilon}

\newcommand{\g}{\gamma}
\newcommand{\G}{\Gamma}
\newcommand{\ld}{\lambda}

\newcommand{\s}{\sigma}
\newcommand{\ft}{\widehat}
\newcommand{\wt}{\widetilde}

\newcommand{\dx}{\partial_x}
\newcommand{\dt}{\partial_t}
\newcommand{\dd}{\partial_\dl}
\newcommand{\invft}[1]{\overset{\vee}{#1}}




\newcommand{\col} [2] {
\left( \begin{smallmatrix} #1 \\ #2 \end{smallmatrix} \right)}





\newcommand{\jb}[1]
{\langle #1 \rangle}


\newcommand{\mtx}[4]
{\left( \begin{smallmatrix} #1 &  #2 \\ #3 &  #4 \end{smallmatrix} \right)}

\allowdisplaybreaks[2]

\sloppy

\hfuzz  = 0.5cm

\begin{document}

\title
[Diophantine Condition in LWP Theory of KdV Systems]
{\bf Diophantine Conditions in Well-Posedness Theory of Coupled KdV-Type Systems:  Local Theory}

\author{Tadahiro Oh}

\address{Tadahiro Oh\\
Department of Mathematics\\
University of Toronto\\
40 St. George St, Rm 6290,
Toronto, ON M5S 2E4, Canada}

\email{oh@math.toronto.edu}

\subjclass[2000]{ 35Q53}

\keywords{KdV;  well-posedness; ill-posedness; bilinear estimate; Diophantine condition}

\begin{abstract}
We consider the local well-posedness problem of a one-parameter family of  coupled KdV-type systems
both in the periodic and non-periodic setting. 
In particular, we show that certain resonances occur, closely depending on the value of a coupling parameter $\al$ when $\al \ne 1$. 
In the periodic setting, we use the Diophantine conditions to characterize the resonances, 
and establish sharp local well-posedness of the system in $H^s(\T_\ld), s \geq s^\ast$,
where $s^\ast = s^\ast(\al) \in (\frac{1}{2}, 1]$ is determined by the Diophantine characterization of certain constants derived from the coupling parameter $\al$.
We also present a sharp local (and global) result in $L^2(\R)$.
In the appendix, we briefly discuss the local well-posedness result in $H^{-\frac{1}{2}}(\T_\ld) $ for $\al= 1$ without the mean 0 assumption,
by introducing the vector-valued $X^{s, b}$ spaces.
\end{abstract}

\maketitle

\section{Introduction}

In this paper, we consider the local well-posedness (LWP) in both periodic and non-periodic settings
of coupled KdV systems of the form:
\begin{equation} \label{KDVsystem1}
\begin{cases}
u_t + a_{11} u_{xxx} + a_{12} v_{xxx} +  b_1 u u_x + b_2 u v_x + b_3 u_x v + b_4 v v_x = 0 \\ 
v_t + a_{21} u_{xxx} + a_{22} v_{xxx} + b_5 u u_x + b_6 u v_x + b_7 u_x v + b_8 v v_x  = 0 \\
(u, v) \big|_{t = 0} = (u_0, v_0) , 
\end{cases}
\end{equation}

\noindent
where $A = \bigl(\begin{smallmatrix}
a_{11} & a_{12} \\ 
a_{21} & a_{22}
\end{smallmatrix} \bigr)$
is self-adjoint and non-singular, and $u$ and $v$ are real-valued functions.
There are several systems of this type: the Gear-Grimshaw system \cite{GG}, the Hirota-Satsuma system \cite{HS}, 
the Majda-Biello system \cite{MB}, etc.
Now, write $A$ as  $A = M^{-1} D M$ where $D = \big(\begin{smallmatrix} d_1 & 0 \\ 0 & d_2 \end{smallmatrix}\big)$
with $d_j \in \R\setminus \{0\}$ and $M$ is orthogonal.  
Then, by letting
$ M  \big(\begin{smallmatrix} u\\v \end{smallmatrix}\big) (x, d_1^{-1} t) 
\mapsto \big(\begin{smallmatrix} u \\v \end{smallmatrix}\big) (x, t)$,
one can reduce \eqref{KDVsystem1} to 
\begin{equation} \label{KDVsystem2}
\begin{cases}
u_t +  u_{xxx} + \wt{b_1} u u_x + \wt{b_2} u v_x + \wt{b_3} u_x v + \wt{b_4} v v_x  = 0 \\ 
v_t + \al v_{xxx} + \wt{b_5} u u_x + \wt{b_6} u v_x + \wt{b_7} u_x v + \wt{b_8} v v_x  = 0 \\
(u, v) \big|_{t = 0} = (u_0, v_0),  
\end{cases}
\end{equation}

\noindent
where $\al  = \frac{d_2}{d_1} \in \R\setminus \{0\}$, 
$(x, t) \in \T_\ld\times\R$ or $\R \times\R$,
with $\T_\ld = [0, 2\pi\ld)$ for some $\ld > 0$.
Note that we do not consider the case $\al = 0$ (i.e. $d_2 = 0$) or  $\al =\pm\infty$ (i.e. $d_1 = 0$), 
since \eqref{KDVsystem2} is not dispersive in those cases.

When $\al = 1$, the basic techniques developed for the LWP of KdV by Kenig-Ponce-Vega \cite{KPV4} can be applied to \eqref{KDVsystem2}
as discussed below, thus yielding the LWP of  \eqref{KDVsystem2}  in $H^{-\frac{1}{2}}(\T_\ld) \times H^{-\frac{1}{2}}(\T_\ld)$
and $H^{-\frac{3}{4}+}(\R) \times H^{-\frac{3}{4}+}(\R)$.
However, when $\al \ne 1$, we show that there is an interval $I_0$ 
such that particular resonances occur for $\al \in I_0 \setminus \{1\}$
and that the regularities for the LWP in both periodic and non-periodic settings
 need to be much higher than those for $\al = 1$.
In particular, 
we use the Diophantine conditions
 to quantify this regularity in the periodic setting.

As a model example, we consider the local well-posedness (LWP) problem of the following system:
\begin{equation} \label{MB}
\begin{cases}
u_t + u_{xxx} + vv_x = 0\\
v_t + \al v_{xxx} + (uv)_x = 0 \\
(u, v) \big|_{t = 0} = (u_0, v_0),  
\end{cases}
 \ (x, t) \in \T_\ld\times\R \text{ or } \R\times\R,
\end{equation}

\noindent
where $ \T_\ld = [0, 2\pi\ld)$ with $\ld \geq 1$, $0< \alpha \leq1$, and $u$ and $v$ are real-valued functions.
We consider the Cauchy problem \eqref{MB} with $(u_0, v_0)  \in H^s(\T_\ld) \times H^s(\T_\ld)$ or $ H^s(\mathbb{R}) \times H^s(\mathbb{R})$. 
While we state and prove our results only for $0 < \al \leq 1$ for the physical reason mentioned below, 
we discuss the corresponding results for other values of $\al \ne 0$.
In particular, we have $I_0$ = $(0, 4]$ for \eqref{MB}. 
See  Remark \ref{JJREMARK6}.

The system \eqref{MB} has been proposed by Majda and Biello \cite{MB} 
as a reduced asymptotic model to study the nonlinear resonant interactions of 
long wavelength equatorial Rossby waves and barotropic Rossby waves with a significant mid-latitude projection,
in the presence of suitable horizontally and vertically sheared zonal mean flows.
We henceforth  refer to \eqref{MB} as  {\it the Majda-Biello System}.
In \cite{MB},  the values of $\al $ are numerically determined and they are
$0.899$, $0.960$, and $0.980$ for different equatorial Rossby waves.
Of particular interest to us is the periodic case because of more challenging mathematical nature of the periodic problem 
as well as the physical relevance of the proposed model (the spatial period for the system before scaling is set as $40, 000$ km in \cite{MB}.)

First, we review the recent well-posedness results for the Korteweg-de Vries (KdV) equation:
\begin{equation} \label{KDV}
\left\{
\begin{array}{l}
u_t + u_{xxx} + u u_x = 0\\
u(x, 0) = u_0(x) \in H^s{(\T)} \text{ or } H^s{(\mathbb{R})}.
\end{array}
\right.
\end{equation}

\noindent
Bourgain \cite{BO1} proved the LWP in $L^2(\T)$ and $L^2(\R)$
via the contraction mapping principle.
Kenig-Ponce-Vega \cite{KPV4} improved Bourgain's result and established the LWP in $H^{-\frac{1}{2}}(\T)$ and $H^{-\frac{3}{4}+}(\R)$.
Colliander-Keel-Staffilani-Takaoka-Tao \cite{CKSTT4} proved the corresponding global well-posedness results via the $I$-method. 
More recently, Christ-Colliander-Tao \cite{CCT} proved the LWP in $H^{-\frac{3}{4}}(\R)$ via the modified Miura transform and the LWP of the modified KdV,
and
Kappeler-Topalov \cite{KT} proved the global well-posedness of the KdV in $H^{-1}(\T)$, 
using the complete integrability of the equation. (See \cite{CKSTT4} and the references therein.)

In this paper,  we use the contraction mapping principle to show the LWP of \eqref{MB}, following the basic arguments in 
\cite{BO1}, \cite{KPV4}, \cite{CKSTT4}.
Before discussing how the KdV theory immediately yields the LWP of \eqref{MB} (and \eqref{KDVsystem2}) when $\al = 1$, 
we list some basic properties of the Majda-Biello system \eqref{MB}.
Several conservation laws are known for the system:
\[ E_1 = \int u \, dx, \quad E_2 = \int v \, dx, \quad E_3 = \int u^2 + v^2 dx, \quad E_4 =\frac{1}{2} \int u_x^2 + \al v_x^2 - u v^2 dx, \] 

\noindent
where $E_4$ is the Hamiltonian of the system. No other conservation laws 
seem apparent; whether the Majda-Biello system  \eqref{MB} is completely integrable is unknown.
Also, the system has scaling which is similar to that of KdV and the critical Sobolev index $s_c$ is $-\frac{3}{2}$ just like KdV.

In \cite{BO1}, Bourgain  introduced a new weighted space-time Sobolev space $X^{s, b}$
whose norm is given by
\begin{equation} \label{Xsb}
\| u \|_{X^{s, b}(Z \times \mathbb{R})} = \big\| \jb{\xi}^s \jb{\tau - \xi^3}^b \ft{u}(\xi, \tau) \big\|_{L^2_{\xi, \tau}(Z^\ast \times \R)},
\end{equation}

\noindent
where $\jb{ \: \cdot \:} = 1 + |  \cdot  | $, and the spatial Fourier domain $Z^\ast = \R$ if the spatial domain $Z = \R$ and $Z^\ast = \Z/\ld$ if $Z = \T_\ld$.
Kenig-Ponce-Vega \cite{KPV4} proved that the sharp bilinear estimate
\begin{equation} \label{KPVbilinear1}
\| \dx (u v)  \|_{X^{s, b -1}(\mathbb{R} \times \mathbb{R})} \lesssim \|u\|_{X^{s, b}}\|v\|_{X^{s, b}} 
\end{equation}
holds for $s > -\frac{3}{4}$ and $b = \frac{1}{2}+$. 
Let $S(t) = e^{-t \dx^3}$ and $\eta(t)$ be a smooth cutoff function supported on $[-2, 2]$ whose value is 1 on $[-1, 1]$.
Then, along with the linear estimates \cite{KPV6}:
\begin{equation*} 
\begin{cases}
\vphantom{\bigg|}\| \eta(t)S(t) u_0 \|_{X^{s, b}(\R\times \R)} \lesssim \| u_0 \|_{H^s_x(\R)} \\
\Big\| \eta (t) \int_0^t S(t-t') N\big( u ( t') \big) dt' \Big\|_{X^{s, b} (\R \times \R)}
\lesssim \left\| N ( u ) \right\|_{X^{s, b-1},}
\end{cases}
\end{equation*}

\noindent
the bilinear estimate \eqref{KPVbilinear1} and scaling establish the LWP of KdV \eqref{KDV} in $H^s(\mathbb{R})$ for $s > -\frac{3}{4}$.
Clearly, the same argument applies to the non-periodic Majda-Biello system \eqref{MB} when $\al = 1$.  
Hence, when $\al = 1$, \eqref{MB} is locally well-posed in $H^s(\mathbb{R}) \times H^s(\R)$ for $s > -\frac{3}{4}$.

In the periodic case, we have a weaker linear estimate on the Duhamel term \cite{BO1}, \cite{CKSTT4}:
\begin{equation*} 
\left\| \eta (t) \int_0^t S(t-t') N\big( u ( t') \big) dt' \right\|_{X^{s, \frac{1}{2}}(\T_\ld \times \mathbb{R})}
\lesssim \left\| N ( u ) \right\|_{Z^s (\T_\ld \times \mathbb{R}),}
\end{equation*}

\noindent
where 
$ \| u \|_{Z^s(\T_\ld \times \mathbb{R})} = \| u \|_{X^{s, -\frac{1}{2}} (\T_\ld \times \mathbb{R})
}+ \|  \jb{\xi}^s \jb{\tau - \xi^3}^{-1}\ft{u}(\xi, \tau) \|_{L^2_{\xi}L^1_\tau (\Z/\ld \times \mathbb{R})}. $
This is due to the fact that $b = \frac{1}{2}$.  
Strictly speaking,  we need to use 
$\| u \|_{Y^s} = \| u \|_{X^{s, \frac{1}{2} }} + \| \jb{\xi}^s \ft{u}(\xi, \tau) \|_{L^2_{\xi}L^1_\tau}$,
instead of $X^{s, \frac{1}{2}}$ norm, 
in order to control the $C( [-T, T]; H^s (\T_\ld) )$ norm of solutions.
However, this modification does not cause a serious problem, 
and 
we  use $X^{s, \frac{1}{2} }$ norm for simplicity in the current discussion. 
The crucial bilinear estimate for the LWP of KdV  is then: 
\begin{equation} \label{KPVZbilinear}
\| \dx (u v)  \|_{Z^s(\T_\ld \times \mathbb{R} )} \lesssim \ld^{0+}\|u\|_{X^{s, \frac{1}{2}}}\|v\|_{X^{s, \frac{1}{2}}.} 
\end{equation}
Assuming the mean 0 condition on $u$ and $v$, it is shown in \cite{KPV4} and  \cite{CKSTT4} that the bilinear estimate
\eqref{KPVZbilinear}
 holds for $ s \geq -\frac{1}{2}$ with $b = \frac{1}{2}$
and fails for any $b \in \mathbb{R} $ if  $s < -\frac{1}{2}$.
A key ingredient is the algebraic identity
\begin{equation} \label{P1algebra}
\xi^3 - \xi_1^3 - \xi_2^3 = 3 \xi \xi_1\xi_2,    \qquad \rm{for} \qquad  \xi = \xi_1 + \xi_2.
\end{equation}

When $\al = 1$, the same estimates can be applied to the periodic Majda-Biello system \eqref{MB} with the mean 0 assumption.
Hence, along with the conservation of the means $E_1$ and $E_2$, it follows that, when $\al = 1$, \eqref{MB} is locally well-posed in 
$H^{-\frac{1}{2}} (\T_\ld)\times H^{-\frac{1}{2}} (\T_\ld)$ for the mean 0 initial conditions.
We address the  LWP theory {\it without} the mean 0 assumption in the appendix.

Now, let's turn to the case $0<\al<1$.
In this case, we have two linear semigroups $S(t)= e^{ -t \dx^3}$ and $S_\al(t)= e^{ -\al t \dx^3}$ corresponding to the linear equations for $u$ and $v$,
respectively.
Thus, the Fourier transform of the solutions to the linear  equation for $u$ is supported on $\{ \tau = \xi^3\}$ and 
that to the linear equation for $v$ is supported on $\{ \tau = \al \xi^3\}$. 
Since $\al \ne 1$, they are supported on {\it distinct} curves, and this causes nontrivial resonance interactions
which are not present when $\al = 1$.
It turns out that the Cauchy problem \eqref{MB} is locally well-posed in $H^s(\T_\ld) \times  H^s(\T_\ld)$
for $s\geq s^\ast = s^\ast(\al)$, where $s^\ast(\al) $ ranges over $ (1/2, 1]$ depending on the value of the coupling parameter $\al$.
In order to precisely quantify $s^\ast$ in terms of $\al$, we need to go through some preliminaries.

For $\al \ne 1$,  we need to define two distinct $X^{s, b}$ spaces. 
For $s, b \in \mathbb{R}$, let $X^{s, b}(\mathbb{T}_\ld\times\mathbb{R})$ and $X_\al^{s, b}(\mathbb{T}_\ld\times\mathbb{R})$ be the completion of 
the Schwartz class $\mathcal{S} (\mathbb{T} \times \mathbb{R})$ with respect to the norms
\begin{align} 
\|u\|_{X^{s, b}(\mathbb{T_\ld} \times \mathbb{R})} 
& = \big\| \jb{\xi}^s \jb{\tau - \xi^3}^b \ft{u}(\xi, \tau) \big\|_{L^2_{\xi, \tau}(\mathbb{Z/\ld} \times \mathbb{R})} 
\\ \|v\|_{X_{\al}^{s, b}(\mathbb{T_\ld} \times \mathbb{R})} 
& = \big\| \jb{\xi}^s \jb{\tau - \al \xi^3}^b \ft{v}(\xi, \tau) \big\|_{L^2_{\xi, \tau}(\mathbb{Z/\ld} \times \mathbb{R})} . 
\end{align}

\noindent
The study of the periodic Majda-Biello system \eqref{MB} will be based  on the iteration in the spaces $X^{s, \frac{1}{2}}\times X_{\al}^{s, \frac{1}{2}}$.

For the following argument, assume $ \ld = 1$ for simplicity.
(For general $\ld \geq 1$, the implicit constants in \eqref{bilinear1} and \eqref{bilinear2} below depend on $\ld$.
See Propositions \ref{PROP:bilinear1} and \ref{PROP:bilinear2}.)
From the standard estimates on the Duhamel terms, the LWP of \eqref{MB} follows once we establish the following bilinear estimates:
\begin{align} \label{bilinear1}
\| \dx (v_1 v_2)  \|_{X^{s, b-1} (\mathbb{T} \times \mathbb{R} )} & \lesssim \|v_1\|_{X_\al^{s, b}(\mathbb{T} \times \mathbb{R} )}
\|v_2\|_{X_\al^{s, b}(\mathbb{T} \times \mathbb{R} )} 
\\ \label{bilinear2}
\| \dx (u v)  \|_{X_\al^{s, b-1} (\mathbb{T} \times \mathbb{R} )} & \lesssim \|u\|_{X^{s, b}(\mathbb{T} \times \mathbb{R} )}
\|v\|_{X_\al^{s, b}(\mathbb{T} \times \mathbb{R} ).} 
\end{align}

\noindent
First, consider the first bilinear estimate \eqref{bilinear1}.  
As in the KdV case \cite{BO1}, \cite{KPV4}, \cite{CKSTT4}, we define the bilinear operator $\mathcal{B}_{s, b} (\cdot, \cdot)$ by
\[ \mathcal{B}_{s, b} (f, g) (\xi, \tau) =  \frac{\xi \jb{\xi}^s }{\jb{\tau - \xi^3}^{1-b}}\frac{1}{2\pi}
\sum_{\xi_1+ \xi_2 = \xi} \intt_{\tau_1 + \tau_2 = \tau} \frac{f(\xi_1, \tau_1) g(\xi_2, \tau_2)}{\jb{\xi_1}^s 
\jb{\xi_2}^s \jb{\tau_1 - \al \xi_1^3}^b  \jb{\tau_2 - \al \xi_2^3}^b} d\tau_1.
\]
Then, \eqref{bilinear1} holds if and only if
\begin{equation} \label{c_1dual} 
\left\| \mathcal{B}_{s,b}(f, g) \right\|_{L^2_{\xi, \tau}} \lesssim \| f \|_{L^2_{\xi, \tau}} \| g \|_{L^2_{\xi, \tau}} . 
\end{equation}

\noindent
As in the KdV case,  $\dx$ appears on the left hand side of \eqref{bilinear1} and thus 
we need to make up for this loss of derivative from 
$\jb{\tau - \xi^3}^{1-b} \jb{\tau_1 - \al \xi_1^3}^b  \jb{\tau_2 - \al \xi_2^3}^b$ in the denominator.
Recall that we basically gained $\frac{3}{2}$ derivatives in the KdV case (at least for $b = \frac{1}{2}$ with $\xi, \xi_1, \xi_2 \ne 0$) 
thanks to the algebraic identity \eqref{P1algebra}.
However, when $\al \ne 1$, we no longer have such an identity and 
we have
\begin{align} \label{resonance1}
\max \big(  \jb{\tau & - \xi^3},    \jb{\tau_1 - \al \xi_1^3},   \jb{\tau_2 - \al \xi_2^3} \big) 
\sim \jb{\tau - \xi^3}+  \jb{\tau_1 - \al \xi_1^3}+   \jb{\tau_2 - \al \xi_2^3} 
\notag \\
& \gtrsim \big| (\tau - \xi^3) - (\tau_1 - \al \xi_1^3) - (\tau_2 - \al \xi_2^3) \big|  
 = | \xi^3 -  \al \xi_1^3 - \al \xi_2^3 | ,  
\end{align}
where $\xi = \xi_1 + \xi_2$ and  $\tau = \tau_1 + \tau_2$.
Note that the last expression in \eqref{resonance1} can be 0 for infinitely many (nonzero) values of $\xi, \: \xi_1,$ and $ \xi_2$,
causing {\it resonances}. 
By solving {\it the resonance equation}:
\begin{equation} \label{JJreseq1}
\xi^3 - \al \xi_1^3 - \al \xi_2^3 = 0 \text{ with }  \xi = \xi_1 + \xi_2, 
\end{equation} 
we have $( \xi_1, \xi_2) = (c_1 \xi, c_2 \xi)$ or $(c_2 \xi, c_1 \xi)$, where 
\begin{equation} \label{c_1}  
c_1 = \tfrac{1}{2} + \tfrac{\sqrt{-3 + 12 \al^{-1}}}{6} \ \text{ and } \
c_2 = \tfrac{1}{2} - \tfrac{\sqrt{-3 + 12 \al^{-1}}}{6}.
\end{equation}
Note that $c_1, c_2 \in \mathbb{R}$ (with $ \ c_1 + c_2 = 1$)  
 if and only if $0 < \al \leq 4.$

If $c_1 \in \mathbb{Q}$ (and thus $c_2 \in \mathbb{Q}$), then there are infinitely many values of $\xi \in \mathbb{Z}$ 
such that  $c_1\xi, \: c_2 \xi \in \mathbb{Z}$.  This causes resonances for infinitely many values of $\xi$, and 
thus we do {\it not} expect any gain of derivative from $\jb{\tau - \xi^3} \jb{\tau_1 - \al \xi_1^3}  \jb{\tau_2 - \al \xi_2^3}  $ in this case.

If $c_1 \in \mathbb{R} \setminus \mathbb{Q}$,  then $c_1 \xi$ is not an integer for any $\xi \in \mathbb{Z}$.
Thus,  $\xi - \al \xi_1^3 - \al \xi_2^3 \ne 0 $ for any $\xi, \: \xi_1, \xi_2 \in \mathbb{Z}$. 
However,  generally speaking, $\xi - \al \xi_1^3 - \al \xi_2^3$ can be  arbitrarily close to 0,
since $c_1 \xi$ can be arbitrarily close to an integer.
Therefore, we need  to measure {\it how ``close" $c_1$ is to rational numbers}.
We'll use the following definition regarding the Diophantine conditions commonly used in dynamical systems.

\begin{definition} [{Arnold \cite{AR}}]
A real number $\rho$ is called of type $(K, \nu)$ \textup{(}or simply of type ${\nu}$\textup{)} if there exist positive $K$ and $\nu$ such that 
for all pairs of integers $(m, n)$, we have
\begin{equation} \label{lowerbd}
\left| \rho - \frac{m}{n} \right| \geq \frac{K}{ |n|^{2+\nu}} .
\end{equation}
\end{definition}

\noindent
Also, for our purpose, we define {\it the minimal type index} of a given real number $\rho$.
\begin{definition}
Given a real number $\rho$, define the minimal type index ${\nu_{\rho}}$ of ${\rho}$ by
\[ \nu_{\rho} = \begin{cases}
\infty \text{, if } \rho \in \mathbb{Q} \\
\inf \{ \nu > 0 : \rho \text{ is of type } \nu \} \text{, if } \rho \notin \mathbb{Q} 
\end{cases} \]
\end{definition}

\begin{remark} \label{JJDIO} \rm
Then, by Dirichlet Theorem {\cite[p.112]{AR}} and {\cite[p.116, lemma 3]{AR}},
it follows that $\nu_\rho \geq 0$ for {\it  any} $\rho \in \mathbb{R}$ and  $\nu_\rho = 0$ for {\it almost every } $\rho \in \mathbb{R}$. 
\end{remark}

Using the minimal type index of $c_1$, we  prove, in Propositions \ref{PROP:bilinear1} and \ref{PROP:counterexample1},  
the bilinear estimate \eqref{bilinear1} 
holds for $s \geq \min ( 1, \frac{1}{2} + \frac{1}{2} \nu_{c_1} + ) $ with $ b = \frac{1}{2}$, 
and fails for  $b \in \R$ if $s < \min ( 1, \frac{1}{2} + \frac{1}{2} \nu_{c_1}  )$.

Let us now turn to the second bilinear estimate \eqref{bilinear2}.
 In this case, the resonance equation is given by
\begin{equation} \label{resonance2}
 \al \xi^3 - \xi_1^3 - \al \xi_2^3 = 0 \text{ with } \xi = \xi_1 + \xi_2.
\end{equation}

\noindent By solving \eqref{resonance2}, we obtain
$(\xi_1, \xi_2) = \big(d_1 \xi, (1-d_1) \xi\big),  \big(d_2 \xi, (1-d_2) \xi\big), (0, \xi)$, where 
\begin{equation} \label{d_1 and d_2}
d_1 = \tfrac{- 3 \al + \sqrt{3 \al(4 - \al)}}{2 (1 - \al)} \ \text{ and } 
\ d_2 = \tfrac{- 3 \al - \sqrt{3 \al(4 - \al)}}{2 (1 - \al)}.
\end{equation}
Note that $d_1, d_2 \in \mathbb{R}$ if and only if $\al \in [ 0, 1) \cup (1, 4]$. 

Using the minimal type indices $\nu_{d_1}$ and $\nu_{d_2}$, we  prove, in Propositions \ref{PROP:bilinear2} and \ref{PROP:counterexample2},  
the bilinear estimate \eqref{bilinear2} 
holds for $s \geq \min ( 1, \frac{1}{2} + \frac{1}{2} \max( \nu_{d_1} ,  \nu_{d_2})+ ) $ with $ b = \frac{1}{2}$
under the mean 0 assumption on $u$,
and fails for any $b \in \R$ if $s < \min ( 1, \frac{1}{2} + \frac{1}{2} \max( \nu_{d_1} ,  \nu_{d_2}) )$.
Note that we need to take the maximum of $\nu_{d_1}$ and $\nu_{d_2}$ 
since we have $d_1 + d_2 \notin \mathbb{Q}$ in general, unlike $c_1 + c_2 = 1$.

Finally, we  state the main local well-posedness result.
Let $s_0 = \frac{1}{2} + \frac{1}{2} \max( \nu_{c_1}, \nu_{d_1},  \nu_{d_2})$.
Note that $s_0 = \frac{1}{2}$ for almost every $\alpha \in (0, 1)$
in view of Remarks \ref{JJDIO} and \ref{JJREMARK2}.

\begin{maintheorem} \label{THM:YLWP} 
Let $0 < \al < 1$ and $\ld \geq 1$. Assume the mean 0 condition on $u_0$.
Then, the Majda-Biello system \eqref{MB}  is locally well-posed in $H^s(\mathbb{T}_\ld) \times H^s(\mathbb{T}_\ld)$
for 
$s \geq s^\ast(\al) := \min ( 1,s_0 + ). $ 
In particular, for almost every $\al \in (0, 1)$, the Majda-Biello system is locally well-posed in 
$H^s(\mathbb{T}_\ld) \times H^s(\mathbb{T}_\ld)$ for $s > \frac{1}{2} $.
Moreover, the (real-valued) Majda-Biello system is globally well-posed
in $H^1(\mathbb{T}_\ld) \times H^1(\mathbb{T}_\ld) $
due to the conservation of the Hamiltonian.
\end{maintheorem}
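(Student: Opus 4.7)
The plan is to close a standard contraction-mapping argument on the Duhamel formulation of \eqref{MB} in the product space $X^{s,\frac{1}{2}} \times X_\al^{s,\frac{1}{2}}$, feeding in the bilinear estimates already stated in the introduction. Concretely, with the smooth time cutoff $\eta(t)$ from earlier, I would rewrite the system as
\begin{align*}
u(t) &= \eta(t) S(t) u_0 - \eta(t) \int_0^t S(t-t')(v v_x)(t')\, dt', \\
v(t) &= \eta(t) S_\al(t) v_0 - \eta(t) \int_0^t S_\al(t-t')\bigl((uv)_x\bigr)(t')\, dt',
\end{align*}
and then estimate each term using the linear and Duhamel bounds on $X^{s,\frac{1}{2}}$ and $X_\al^{s,\frac{1}{2}}$. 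These are identical in form to the ones recalled for periodic KdV, modulo the standard passage to the $Y^s$-type auxiliary norm needed to control the $C([-T,T];H^s)$ topology; this technical refinement is handled exactly as for scalar KdV.

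The nonlinear step reduces to Propositions \ref{PROP:bilinear1} and \ref{PROP:bilinear2}. Since $s \geq s^\ast(\al)=\min(1,s_0+)$ with $s_0 = \tfrac12 + \tfrac12\max(\nu_{c_1},\nu_{d_1},\nu_{d_2})$, the hypothesis of each bilinear estimate is satisfied, so we have $\|\dx(v_1 v_2)\|_{X^{s,-\frac{1}{2}}} \lesssim \ld^{0+} \|v_1\|_{X_\al^{s,\frac{1}{2}}} \|v_2\|_{X_\al^{s,\frac{1}{2}}}$ and the analogous bound for $\dx(uv)$ in $X_\al^{s,-\frac{1}{2}}$. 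Introducing the usual time-localization factor $T^\theta$ with some $\theta>0$ and shrinking $T$ gives a strict contraction on a closed ball; uniqueness, continuous dependence on data, and persistence of regularity then follow from the same quadratic estimates applied to differences.

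Before invoking \eqref{bilinear2}, I would need to check that the mean-zero condition on $u$ is preserved both by the flow and by each Picard iterate. Integrating the first equation of \eqref{MB} over $\T_\ld$ and using $\int v v_x\, dx = 0$ shows $\int u(t)\,dx \equiv \int u_0\, dx = 0$; the same holds at every Picard stage because the Duhamel term for $u$ is a perfect $x$-derivative. For the global extension in $H^1\times H^1$, I would combine the conserved mass $E_3$ and Hamiltonian $E_4$: writing
\[ \tfrac12 \|u_x\|_{L^2}^2 + \tfrac{\al}{2} \|v_x\|_{L^2}^2 = E_4 + \tfrac12 \int u v^2 \, dx, \]
bounding $|\int u v^2\,dx| \leq \|u\|_{L^2} \|v\|_{L^4}^2 \lesssim \|u\|_{L^2}\bigl(\|v\|_{L^2}^{3/2}\|v_x\|_{L^2}^{1/2} + \|v\|_{L^2}^2\bigr)$ by the one-dimensional Gagliardo-Nirenberg inequality on $\T_\ld$, and absorbing $\|v_x\|_{L^2}^{1/2}$ via Young's inequality (using $\al>0$) produces an a priori bound on $\|u\|_{H^1}^2 + \|v\|_{H^1}^2$ in terms of $E_3$ and $E_4$. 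Iterating the local theorem yields the global solution. The main obstacle is less the iteration itself---routine once the bilinear estimates are granted---and more the bookkeeping needed to track the $\ld^{0+}$ loss and the $Y^s$ refinement so that the local existence time depends only on $\|(u_0,v_0)\|_{H^s}$ in a way compatible with the global step.
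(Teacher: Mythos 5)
Your outline of the global-in-time step (mean-zero preservation, and the $H^1$ a priori bound from $E_3$ and $E_4$ via Gagliardo--Nirenberg) is fine and consistent with what the paper asserts. The genuine gap is in the local contraction itself: you close it by ``introducing the usual time-localization factor $T^\theta$ with some $\theta>0$ and shrinking $T$.'' That mechanism is not available here. The iteration is run at the endpoint $b=\tfrac{1}{2}$ (Propositions \ref{PROP:bilinear1} and \ref{PROP:bilinear2} are stated for $Z^s$ against $Y^s$, i.e.\ exactly at $b=\tfrac12$, and in the periodic setting one cannot move to $b>\tfrac12$ or weaken the left-hand side below $b-1=-\tfrac12$), so no positive power of $T$ can be extracted from time localization. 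The paper instead follows \cite{CKSTT4}: smallness is manufactured by rescaling the torus $\T_\ld \mapsto \T_{\s\ld}$, which shrinks the data norm like $\s^{-\frac{3}{2}}$, and the contraction is then run on a time interval of size $\sim 1$ for small data.

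This is also where your ``bookkeeping of the $\ld^{0+}$ loss'' remark understates the real issue. The bilinear constants are not $\ld^{0+}$ in general: when $s_0<1$ they are $C_1(\ld), C_2(\ld) \sim \ld^{s_0+}$ by \eqref{YC1} and \eqref{YC2}, so after rescaling one must check that
\[
\max\bigl(C_1(\s\ld), C_2(\s\ld)\bigr)\,\|(u_0^\s, v_0^\s)\|_{H^s(\T_{\s\ld})} \lesssim (\s\ld)^{s_0+}\,\s^{-\frac{3}{2}}\,\|(u_0,v_0)\|_{H^s(\T_\ld)}
\]
can be made small as $\s\to\infty$, which works precisely because $s_0 < \tfrac{3}{2}$. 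As Remark \ref{JJREMARK5} emphasizes, the whole local theory for non-small data hinges on this competition between the $\ld$-growth of the bilinear constants and the $\s^{-\frac32}$ scaling gain; without the rescaling step (and the quantitative control $C_j(\ld)\ll\ld^{3/2}$), the contraction does not close.
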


This result is sharp in the following sense.
Consider the solution map $\Phi_t: (u_0, v_0)  \in H^s \times H^s 
\longmapsto \big( u(t), v(t) \big) \in H^s \times H^s$ for $|t| \ll 1$.
Then, we have the following necessary conditions on the Sobolev index $s$ 
(This kind of result is often referred to as {\it ill-posedness} results. c.f. \cite{KPV5}, \cite{TZ}.)

\begin{maintheorem} \label{THM:illposedonT}
\textup{(a)} If $\max ( \nu_{c_1}, \nu_{d_1}, \nu_{d_2}) > 1$ and the solution map is $C^2$, then $s \geq 1.$
In particular, this result applies to the case when any of $c_1, d_1,$ or $ d_2$ is a rational number.

\noindent
\textup{(b)} If $\max ( \nu_{c_1}, \nu_{d_1}, \nu_{d_2}) \leq 1$ and the solution map is $C^3$, then 
$s \geq s_0$.
\end{maintheorem}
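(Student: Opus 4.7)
The plan is to apply the standard smoothness–obstruction technique of Bourgain and Kenig--Ponce--Vega: if $\Phi_t$ is $C^k$ at the origin, then each Fr\'echet derivative $D^j\Phi_t(0)$, $1\leq j\leq k$, is a bounded $j$-linear map from $(H^s(\mathbb{T}_\ld)\times H^s(\mathbb{T}_\ld))^j$ into $H^s(\mathbb{T}_\ld)\times H^s(\mathbb{T}_\ld)$. Expanding a solution with initial data $\eps(\phi,\psi)$ as $(u,v)=\sum_{j\geq 1}\eps^j(u_j,v_j)$ and matching powers of $\eps$ in \eqref{MB} identifies $(u_j,v_j)$, up to a factor of $j!$, with the successive Picard iterates, whose explicit Fourier representations can be tested against sharp spike data to extract necessary conditions on $s$.

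For Part (a), the hypothesis $\nu:=\max(\nu_{c_1},\nu_{d_1},\nu_{d_2})>1$ produces \emph{asymptotically exact} resonances at the level of a single Duhamel integration. Suppose for concreteness $\nu_{c_1}>1$. By the definition of the minimal type index, for each $\nu'\in(1,\nu_{c_1})$ there are infinitely many integers $n\to\infty$ admitting approximants $m/n$ of $c_1$ with $|c_1n-m|=o(n^{-1-\nu'})$. Take $\phi\equiv 0$ and $\psi=n^{-s}\bigl(e^{imx}+e^{i(n-m)x}\bigr)$, so that $\|\psi\|_{H^s}\sim 1$. The Fourier coefficient at $\xi=n$ of
\[ u_2(t)=-\tfrac{1}{2}\int_0^t S(t-s)\,\dx\bigl(S_\al(s)\psi\bigr)^2\,ds \]
carries the phase factor $(e^{it\Phi}-1)/\Phi$ with $\Phi=n^3-\al m^3-\al(n-m)^3$. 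A Taylor expansion around $m=c_1n$ gives $|\Phi|\lesssim n^2|c_1n-m|=o(n^{1-\nu'})\to 0$, so $t\Phi\to 0$ at fixed $t$ and the factor converges to $it$; consequently $\|u_2(t)\|_{H^s}\gtrsim t\,n^{1-s}$, which forces $s\geq 1$. The cases $\nu_{d_1}>1$ or $\nu_{d_2}>1$ are handled identically, replacing $u_2$ by $v_2=-\int_0^tS_\al(t-s)\dx(u_1v_1)\,ds$, taking $\phi$ nonzero, and exploiting the near-exact resonance from \eqref{resonance2} attached to $d_1$ or $d_2$.

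For Part (b), only near-resonances of size $|\Phi|\gtrsim n^{1-\nu}$ are available since $\nu\leq 1$, so the second-iterate computation merely yields $s\geq \nu$, which is weaker than the claimed $s_0=(1+\nu)/2$. I therefore invoke the $C^3$ hypothesis and examine the cubic iterates $u_3$ and $v_3$, given by nested double Duhamel integrals whose Fourier representations carry two successive resonance denominators $\Phi_{\mathrm{in}}, \Phi_{\mathrm{out}}$ (each of $c_1$- or $d$-type). Taking $(\phi,\psi)$ as a sum of three well-chosen spikes at a common scale $n\to\infty$, I select the triple so that whichever denominator corresponds to the index attaining $\nu$ is pushed down to the Diophantine limit $n^{1-\nu+}$, while the remaining frequency degrees of freedom are tuned so that the outer time integration contributes an $O(t)$ (non-cancelling) factor rather than a decaying $1/\Phi$. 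A Fourier computation parallel to those of Propositions \ref{PROP:counterexample1} and \ref{PROP:counterexample2} then produces $\|u_3(t)\|_{H^s}$ or $\|v_3(t)\|_{H^s}\gtrsim t\,n^{(1+\nu)-2s+}$, and boundedness of the cubic form forces $s\geq (1+\nu)/2=s_0$.

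The main obstacle lies in Part (b), specifically in designing the three-spike configuration so that the near-resonance in one Duhamel step \emph{survives} the nested time integration. A direct evaluation shows that the cubic iterate naturally produces a difference of the form $(e^{it\Phi_3}-1)/\Phi_3 - (e^{it\Phi_2}-1)/\Phi_2$ with $\Phi_3-\Phi_2=\Phi_{\mathrm{in}}$, and this difference absorbs one factor of $\Phi_{\mathrm{in}}$ by Taylor expansion; naively making \emph{both} denominators simultaneously small therefore gains nothing over the bilinear case. The delicate point is to arrange the outer frequencies so that the surviving contribution to the difference scales correctly, producing the expected exponent $1+\nu-2s$ rather than $2\nu-2s$. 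One must then run this argument separately for each of the three cases according to whether $\nu$ is attained by $\nu_{c_1}, \nu_{d_1}$, or $\nu_{d_2}$, placing the near-resonance at either the inner or the outer Duhamel stage of $u_3$ or $v_3$ as appropriate; the remaining Fourier extractions follow the pattern established in the counterexample propositions cited above.
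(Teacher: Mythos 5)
Part (a) of your proposal is essentially the paper's own argument and is correct: with $\phi\equiv 0$ and $\psi$ a pair of spikes at the near-resonant frequencies (the paper uses $[c_1N]$ and $[c_2N]$, the nearest integers to $c_1N$ and $c_2N$), the second Picard iterate at output frequency $N$ carries the factor $(e^{it\Phi}-1)/\Phi$ with $\Phi=\al[c_1N]^3+\al[c_2N]^3-N^3$, and when $\nu_{c_1}>1$ the subsequence from \eqref{resres11} makes $\Phi\to 0$, so the factor tends to $it$ and $\|\phi_2(t)\|_{H^s}\gtrsim tN^{1-s}$ forces $s\geq 1$; the $d_j$ cases are the same with the roles of the two equations exchanged, as you say.

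In part (b), however, you stop exactly at the decisive step. Your skeleton (pass to the third iterate, push the inner denominator to the Diophantine limit $N^{1-\nu+\eps}$, make the outer time integration exactly resonant) is precisely what the paper does — with $\phi\equiv 0$ and $\psi=N^{-s}(\cos([c_1N]x)+\cos([c_2N]x))$ one has $\phi_1\equiv\psi_2\equiv 0$ and $\psi_3=3\int_0^tS_\al(t-t')\dx(\phi_2\psi_1)\,dt'$, and the product $\phi_2\psi_1$ contains a component sitting exactly on the curve $\tau=\al\xi^3$ at $\xi=[c_jN]$. But you correctly note that the two terms of $\phi_2$ in \eqref{phi2} both land within $|\Phi_{\mathrm{in}}|$ of that curve, producing $\frac{1}{\Phi_{\mathrm{in}}}\bigl(t-\frac{e^{-it\Phi_{\mathrm{in}}}-1}{-i\Phi_{\mathrm{in}}}\bigr)$, and that a Taylor expansion would absorb the $1/\Phi_{\mathrm{in}}$ and destroy the gain — and you then leave the resolution as an unexecuted plan ("tune the remaining frequency degrees of freedom"). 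The missing observation, which is what actually closes the argument, is that under the hypothesis $\nu\leq 1$ the Diophantine \emph{lower} bound \eqref{resres111} gives $|\Phi_{\mathrm{in}}|\gtrsim N^{1-\nu-\eps}$; so along the subsequence where $|\Phi_{\mathrm{in}}|\lesssim N^{1-\nu+\eps}$ one still has $|\Phi_{\mathrm{in}}|\to\infty$ for $\nu<1$, the Taylor expansion is not valid, the companion term $\frac{e^{-it\Phi_{\mathrm{in}}}-1}{-i\Phi_{\mathrm{in}}}=O(|\Phi_{\mathrm{in}}|^{-1})=o(t)$ is genuinely lower order, and the exactly resonant term survives to give $\|\psi_3(t)\|_{H^s}\gtrsim tN^{1+\nu-2s-\eps}$ as in \eqref{psi3}. (The borderline $\nu=1$ must be checked separately, but there either branch of the dichotomy $|\Phi_{\mathrm{in}}|t\gtrless 1$ yields $s\geq 1=s_0$.) Without this, the cancellation you flag is not excluded and part (b) does not close. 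Two smaller points: two spikes suffice (your three-spike ansatz only multiplies the bookkeeping and reintroduces nonzero $\phi_1,\psi_2$ that must then be controlled), and the three cases according to which of $\nu_{c_1},\nu_{d_1},\nu_{d_2}$ attains the maximum are indeed handled by permuting which equation hosts the inner and outer Duhamel steps, using \eqref{resonance2} in place of the $c_1$-resonance.
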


Therefore, if we require that the solution map is smooth (at least $C^3$), then the LWP theory on $\T_\ld$ is complete
(except for the endpoint case $s = \frac{1}{2} + \frac{1}{2} \max ( \nu_{c_1}, \nu_{d_1}, \nu_{d_2})$ 
when $\max ( \nu_{c_1}, \nu_{d_1}, \nu_{d_2}) < 1$ which leads to a number-theoretic question. See Remark \ref{JJREMARK3}.)

We'd like to point out the following. 
On the one hand the regularity index $s^\ast(\al)$ for the local well-posedness is $\frac{1}{2}+$ for almost every $\al \in (0, 1)$. 
On the other hand,  for any interval $I \subset (0, 1)$, there exists $\al \in I$ such that $s^\ast(\al) = 1$.
This shows that the well-posedness (below $H^1$) of the periodic Majda-Biello system  \eqref{MB}
is very unstable under a slightest perturbation of the parameter $\al$.
Also, note that $\al = 1$ is a special value since \eqref{MB} is LWP for $s \geq -\frac{1}{2}$ when $\al = 1$.

Lastly, note that we have stated Theorem \ref{THM:YLWP} on $\mathbb{T}_\ld$ for $\ld \geq 1$
since in proving the global well-posedness on $\mathbb{T}$ (i.e. $\ld = 1$) via the I-method, 
we need to have LWP for $\ld \geq 1$ due to the scaling argument.  See \cite{OH2}. 
However, Theorem \ref{THM:YLWP} itself holds on $\mathbb{T}_\ld$ for any $\ld > 0$
as long as $\ld$ is fixed.

\medskip

Now, let's discuss the LWP in the non-periodic setting for $0 < \al < 1$.
In this case, the LWP of  \eqref{MB} follows once we prove the bilinear estimates:
\begin{align} \label{Rbilinear1}
\| \dx (v_1 v_2)  \|_{X^{s, b-1} ( \mathbb{R}^2 )} & \lesssim \|v_1\|_{X_\al^{s, b}( \mathbb{R}^2 )}\|v_2\|_{X_\al^{s, b}( \mathbb{R}^2 )} 
\\ \label{Rbilinear2}
\| \dx (u v)  \|_{X_\al^{s, b-1} ( \mathbb{R}^2 )} & \lesssim \|u\|_{X^{s, b}( \mathbb{R}^2 )}\|v\|_{X_\al^{s, b}( \mathbb{R}^2 )} .
\end{align}

\noindent
As in the periodic case, we obtain two resonance equations
$  \xi^3 -  \al \xi_1^3 - \al \xi_2^3 = 0 $
and 
$ \al \xi^3 - \xi_1^3 - \al \xi_2^3 = 0$
with $\xi = \xi_1 + \xi_2$,
giving rise to $c_1$, $d_1$, and $d_2$.
Since the spatial Fourier variable $\xi$ is not discrete in this case,
the rational/irrational character of $c_1, d_1,$ and  $d_2$ is irrelevant. 
In Propositions \ref{PROP:Rbilinear} and \ref{PROP:Rcounterexample}, 
we  prove the sharp bilinear estimates \eqref{Rbilinear1} and  \eqref{Rbilinear2} for $s \geq 0$ with some $b = \frac{1}{2}+$. 
Thus, we obtain

\begin{maintheorem} \label{THM:LWPonR}
Let $0<\al <1$.
The Majda-Biello system \eqref{MB}   is locally well-posed in $H^s(\mathbb{R}) \times H^s(\mathbb{R})$ for $s \geq 0$. 
Moreover, the (real-valued) Majda-Biello system is globally well-posed
in $L^2(\mathbb{R}) \times L^2(\mathbb{R}) $
due to the $L^2$-conservation law.
\end{maintheorem}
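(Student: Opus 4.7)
The plan is to obtain the local theorem by a standard vector-valued contraction mapping argument in $X^{s,\frac{1}{2}+}(\R^2) \times X_\al^{s,\frac{1}{2}+}(\R^2)$, invoking the linear estimates of \cite{KPV6} for both free evolutions $S(t)$ and $S_\al(t)$. With a smooth time cutoff $\eta(t/T)$, the Picard iteration becomes a contraction on a ball of radius $\sim \|(u_0,v_0)\|_{H^s\times H^s}$ for $T=T(\|(u_0,v_0)\|_{H^s\times H^s})$ small enough, \emph{provided} we have the two nonlinear estimates \eqref{Rbilinear1}--\eqref{Rbilinear2}. All the real content of the theorem therefore lies in these bilinear estimates, which are asserted as Propositions \ref{PROP:Rbilinear} and \ref{PROP:Rcounterexample} later in the paper; granting them for $s\geq 0$ with $b=\tfrac{1}{2}+$, the local result follows verbatim from the KdV blueprint of \cite{BO1,KPV4}.

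To prove the bilinear estimates, I would mimic the KPV argument by dualizing against an $L^2_{\xi,\tau}$ test function, passing to Fourier variables, and reducing to a trilinear bound whose singular weight is
\[ \frac{|\xi|\,\jb{\xi}^s}{\jb{\tau-\xi^3}^{1-b}\jb{\tau_1-\al\xi_1^3}^b\jb{\tau_2-\al\xi_2^3}^b\jb{\xi_1}^s\jb{\xi_2}^s}, \]
with $\xi=\xi_1+\xi_2$, $\tau=\tau_1+\tau_2$. The analogue here of the KdV identity \eqref{P1algebra} is
\[ (\tau-\xi^3)-(\tau_1-\al\xi_1^3)-(\tau_2-\al\xi_2^3) = -(1-\al)\xi^3 - 3\al\,\xi\xi_1\xi_2, \]
so the maximal of the three modulation weights dominates $|(1-\al)\xi^3 + 3\al\xi\xi_1\xi_2|$. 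I would then split into three regions according to which of $\jb{\tau-\xi^3}$, $\jb{\tau_1-\al\xi_1^3}$, $\jb{\tau_2-\al\xi_2^3}$ is maximal and, in each region, apply Cauchy--Schwarz after integrating out two of the four variables $(\xi_1,\tau_1,\xi,\tau)$; the remaining one-variable supremum is controlled by a change of variables whose Jacobian produces the necessary power of $|\xi|$ to absorb the derivative $\dx$ down to $s=0$. Because $\xi\in\R$ is continuous, the resonance curves $\xi_1=c_i\xi$, $\xi_1=d_i\xi$ are lower-dimensional sub-manifolds of measure zero and contribute negligibly, which is precisely why the Diophantine obstruction of Theorem \ref{THM:YLWP} disappears and we can push all the way down to $s\geq 0$.

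The main obstacle will be the second estimate \eqref{Rbilinear2}: the derivative $\dx$ must be recovered against the outer weight $\jb{\tau-\al\xi^3}$ while only one of the two inputs is tuned to the $\al$-flow, so the three cases of the maximal-weight analysis become genuinely asymmetric. Moreover, the Jacobian of the resonance function, $\partial_{\xi_1}\!\bigl((1-\al)\xi^3+3\al\xi\xi_1\xi_2\bigr)=3\al\,\xi(\xi_2-\xi_1)$, degenerates along $\xi_1\approx\xi_2$, and near that curve the full $b=\tfrac{1}{2}+\eps$ margin is needed to make the associated $L^2$-integral converge. Once the local theorem is established, global well-posedness in $L^2(\R)\times L^2(\R)$ follows immediately from the conservation law $E_3=\int(u^2+v^2)\,dx$: it furnishes an \emph{a priori} $L^2$ bound uniform in time, while the local lifespan depends only on $\|(u,v)\|_{H^0\times H^0}$, so iteration of the local theorem extends any $L^2$-solution to all of $\R$.
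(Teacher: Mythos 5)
Your overall architecture is the paper's: the theorem is reduced to the bilinear estimates \eqref{Rbilinear1}--\eqref{Rbilinear2} (Proposition \ref{PROP:Rbilinear}), which are proved by dualizing, applying H\"older, and controlling a weighted $\sup_{\xi,\tau}$ of an integral via the calculus lemmata and a change of variables in $\xi_1$; the global statement then follows from the $E_3$ conservation law exactly as you say. The gap is in your treatment of the second estimate, which you yourself single out as the main obstacle. The resonance function you write down, $-(1-\al)\xi^3-3\al\xi\xi_1\xi_2$, is the one for \eqref{Rbilinear1}; for \eqref{Rbilinear2} the relevant quantity is $\al\xi^3-\xi_1^3-\al\xi_2^3=-\xi_1\bigl((1-\al)\xi_1^2+3\al\xi\xi_1-3\al\xi^2\bigr)$, genuinely cubic in $\xi_1$, and the Jacobian $F'(\xi_1)=-3\bigl((1-\al)\xi_1^2+2\al\xi\xi_1-\al\xi^2\bigr)$ degenerates at $\xi_1=r_1\xi,\,r_2\xi$ with $r_1=\al^{1/2}/(1+\al^{1/2})$, $r_2=-\al^{1/2}/(1-\al^{1/2})$ --- not along $\xi_1\approx\xi_2$. (The locus $\xi_1\approx\xi_2\approx\xi/2$ is the degeneracy for the \emph{first} estimate, and there the resonance function is automatically of size $(1-\al/4)|\xi|^3$, which is the content of Remark \ref{KPVremark1} and why $\al\ne 4$ matters.)

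More substantively, your proposed remedy near the degeneracy --- ``the full $b=\tfrac12+\eps$ margin is needed to make the $L^2$-integral converge'' --- is not what closes the argument, and taken literally the change-of-variables step simply fails where $F'$ vanishes. The paper's resolution is structural: it proves the covering claim \eqref{SUBdivision}, namely that wherever $|F'(\xi_1)|\ll|\xi|^2$ one automatically has $|\xi_1|\sim|\xi|$ and $|G(\xi_1)|\gtrsim|\xi|^2$ (via the auxiliary polynomial $H=\tfrac13F'-G=\al\xi(\xi_1-2\xi)$), hence $\max\bigl(\jb{\tau-\al\xi^3},\jb{\tau_1-\xi_1^3},\jb{\tau_2-\al\xi_2^3}\bigr)\gtrsim|\xi|^3$; one then closes by a three-way case analysis on the maximal modulation using Lemma \ref{calc1}. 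You would need to supply this (or an equivalent) dichotomy to make your sketch a proof. A smaller point: the resonance curves on $\R$ do not ``contribute negligibly'' --- they are precisely what forces the threshold up to $s=0$ (Proposition \ref{PROP:Rcounterexample} builds its counterexample on rectangles centered at $(c_jN,\al(c_jN)^3)$); what disappears on $\R$ is only the arithmetic, Diophantine nature of the obstruction.
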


\noindent
Moreover, this result is sharp in the following sense:
\begin{maintheorem} \label{THM:illposedonR}
 If the solution map is $C^2$, then 
$s \geq 0$.
\end{maintheorem}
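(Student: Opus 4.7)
The plan is to apply the standard $C^2$ ill-posedness argument (going back to Bourgain; cf.\ \cite{KPV5,TZ}). If the solution map $\Phi_t:H^s(\R)\times H^s(\R)\to H^s(\R)\times H^s(\R)$ is $C^2$ at the origin for $|t|\leq T$, then differentiating the Duhamel form of \eqref{MB} twice in the direction $((0,\psi),(0,\psi))$ produces a bilinear term whose first component is
\[
B_1(\psi,\psi)(t) = -\tfrac{1}{2}\int_0^t S(t-t')\,\dx\bigl(S_\al(t')\psi\bigr)^2\, dt',
\]
where $S(t)=e^{-t\dx^3}$ and $S_\al(t)=e^{-\al t\dx^3}$. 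The $C^2$ hypothesis forces $\nm{B_1(\psi,\psi)(t)}_{H^s(\R)} \lesssim \nm{\psi}_{H^s(\R)}^2$ uniformly for $|t|\leq T$, and the strategy is to exhibit, for every $s<0$, a sequence $\{\psi_N\}$ violating this inequality.

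On the Fourier side,
\[
\widehat{B_1(\psi,\psi)}(\xi,t) = -\frac{i\xi}{2}e^{it\xi^3}\int_{\xi_1+\xi_2=\xi}\ft\psi(\xi_1)\ft\psi(\xi_2)\,\frac{e^{it\Phi(\xi_1,\xi_2)}-1}{i\Phi(\xi_1,\xi_2)}\,d\xi_1,
\]
with resonance phase $\Phi(\xi_1,\xi_2) := \al\xi_1^3 + \al\xi_2^3 - (\xi_1+\xi_2)^3$. By \eqref{c_1}, $\Phi$ vanishes precisely on the rays $(\xi_1,\xi_2) = (c_1\xi,c_2\xi)$ and $(c_2\xi,c_1\xi)$, and for $0<\al<1$ the numbers $c_1,c_2$ are real, distinct, and nonzero (one checks $c_1>1>0>c_2$). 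Fix large $N$, set $r = N^{-2}$, let $I_j$ be the interval of length $r$ centered at $c_j N$ ($j=1,2$), and take $\ft{\psi_N} := \chi_{I_1} + \chi_{I_2}$; then $\nm{\psi_N}_{H^s}^2 \sim N^{2s}r$. The derivative $\partial_{\xi_1}\Phi(c_1 N,c_2 N) = 3(\al c_1^2-1)N^2$ is nonzero for $0<\al<1$ (a direct computation from \eqref{c_1} shows $\al c_1^2 = 1$ only when $\al\in\{1,4\}$), so on $I_1\times I_2$ one has $|t\Phi|\lesssim tN^2 r = t \lesssim 1$; the Taylor expansion $(e^{iz}-1)/(iz) = 1 + O(z)$ then gives $|(e^{it\Phi}-1)/(i\Phi)| \gtrsim t$ throughout $I_1\times I_2$ with phase essentially constant. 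Since $\nm{\chi_{I_1}*\chi_{I_2}}_{L^2}^2 \sim r^3$, one obtains
\[
\bigl|\widehat{B_1(\psi_N,\psi_N)}(\xi,t)\bigr| \gtrsim tN\,(\chi_{I_1}*\chi_{I_2})(\xi) \quad\text{for }\xi\in I_1+I_2,
\]
and thus $\nm{B_1(\psi_N,\psi_N)(t)}_{H^s}^2 \gtrsim t^2 N^{2s+2}r^3$. Forming the relevant ratio,
\[
\frac{\nm{B_1(\psi_N,\psi_N)(t)}_{H^s}}{\nm{\psi_N}_{H^s}^2} \gtrsim \frac{tN^{s+1}r^{3/2}}{N^{2s}r} = tN^{1-s}r^{1/2} = tN^{-s},
\]
which tends to $\infty$ with $N$ for any fixed $t\in(0,T]$ whenever $s<0$, contradicting the $C^2$ hypothesis. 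Therefore $s\geq 0$.

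The main delicate point is the pointwise \emph{lower} bound on the resonance kernel: one needs $(e^{it\Phi}-1)/(i\Phi)$ to remain close to $t$ rather than oscillate, over the entire support of the convolution. The scale $r = N^{-2}$ is chosen precisely to make $|t\Phi|\lesssim 1$ uniformly on $I_1\times I_2$, which places the kernel within a small neighborhood of $t$ and rules out destructive cancellation. Everything else—the $H^s$ norm of $\psi_N$, the $L^2$ norm of the triangular bump $\chi_{I_1}*\chi_{I_2}$, and the factor $\xi\sim N$ from $\dx$—is a routine computation.
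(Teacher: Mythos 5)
Your proposal is correct and follows essentially the same route as the paper's proof: both take $\phi\equiv 0$ and $\psi$ with Fourier support in windows of width $\sim N^{-2}$ about the resonant frequencies $c_1N$ and $c_2N$, use the identity $\al(c_1N)^3+\al(c_2N)^3=N^3$ to keep the Duhamel kernel $(e^{it\Phi}-1)/(i\Phi)$ pointwise comparable to $t$ with no cancellation, and conclude that the second derivative of the flow at the origin has $H^s$ norm $\gtrsim N^{-s}\nm{\psi}_{H^s}^2$, forcing $s\geq 0$. The only differences are cosmetic (the paper normalizes $\nm{\psi}_{H^s}\sim 1$, uses real cosine packets, and inserts a small parameter $\eps$ in the window width $\g=\eps N^{-2}$ to pin down the lower bound on the kernel via its imaginary part — the same point you flag as the delicate step).
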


\noindent 
Therefore, if we require that the solution map is smooth (at least $C^2$), 
then the well-posedness theory on $\mathbb{R}$ is complete.
Note that although the rational/irrational character of $c_1, d_1,$ and  $d_2$ is irrelevant in this case, 
the result for $ \al  \in (0, 1)$ is much worse than that for $\al = 1$,
where the threshold is $s = -\frac{3}{4}$.

\begin{remark} \rm
There are several known results for \eqref{KDVsystem1}.
In \cite{BPST}, Bona-Ponce-Saut-Tom proved the LWP (for the Gear-Grimshaw system)  in $H^s(\mathbb{R}) \times H^s(\mathbb{R})$ for $ s > \frac{3}{4}$.  
Then, via the $X^{s, b}$ space and the bilinear estimate \eqref{KPVbilinear1} 
in the non-periodic setting, 
further LWP and partial GWP results were proven in Ash-Cohen-Wang \cite{ACW}, Feng \cite{F}, 
 Linares-Panthee \cite{LP}, Saut-Tzvetkov \cite{ST} (also for KP systems on $\mathbb{R}^2$),
where the best LWP result is proven in $H^s(\mathbb{R}) \times H^s(\mathbb{R})$ for $ s > -\frac{3}{4}$.
We'd like to point out that in \cite{ACW}, \cite{LP}, and \cite{ST},  it was  assumed that 
$A = \bigl(\begin{smallmatrix}
a_{11} & a_{12} \\ 
a_{21} & a_{22}
\end{smallmatrix} \bigr)$
can be reduced to the identity matrix
$\bigl(\begin{smallmatrix}
1 & 0\\0& 1
\end{smallmatrix} \bigr)$
by similarity transformations and scaling changes.
However, in general,  $A$ can only be reduced to $\bigl(\begin{smallmatrix}
1 & 0\\0& \al
\end{smallmatrix} \bigr)$ even if we assume that $A$ is self-adjoint and non-singular.
Thus their results do {\it not} apply to the general KdV system \eqref{KDVsystem1}, contrary to their claim.
This is due to the fact that separate space-time scalings are applied to $u$ and $v$ in their work, 
which causes rescaled functions $\wt{u}$ and $\wt{v}$ to be evaluated at different space-time points $(x, t)$.
In this case, a simple application of the bilinear estimate \eqref{KPVbilinear1} is clearly prohibited.
As one can see from Theorems \ref{THM:LWPonR} and \ref{THM:illposedonR}, 
one can in general expect LWP on $\mathbb{R}$ only for $ s\geq 0$ if $\al \ne 1$.
Also, see Alvarez-Samaniego and Carvajal \cite{AC}.
\end{remark}

In the appendix, by introducing the vector-valued $X^{s, b}$ spaces, we  sketch the proof of the following theorem:
\begin{maintheorem} \label{MTHM:mean0LWP} 
Let $\al =1$ and $\ld \geq 1$.
The Majda-Biello system \eqref{MB}   is locally well-posed in $H^{s}(\mathbb{T}_\ld) 
\times H^{s}(\mathbb{T}_\ld)$ for $s \geq -\frac{1}{2}$
\textup{(}without the mean 0 assumption on $u$ and $v$.\textup{)}
\end{maintheorem}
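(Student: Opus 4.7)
The plan is to eliminate the nonzero-mean obstruction by absorbing it into a linear drift term and then carrying out the Bourgain iteration in a vector-valued $X^{s, b}$ space tailored to this drift. Because $E_1 = \int u\, dx$ and $E_2 = \int v\, dx$ are conserved, I would first write $u = c_u + \tilde u$ and $v = c_v + \tilde v$ with $c_u, c_v$ equal to the (constant) means and $\tilde u, \tilde v$ of spatial mean zero. Substituting into \eqref{MB} with $\al = 1$ converts the system into
\begin{align*}
\tilde u_t + \tilde u_{xxx} + c_v \tilde v_x + \tilde v \tilde v_x &= 0, \\
\tilde v_t + \tilde v_{xxx} + c_v \tilde u_x + c_u \tilde v_x + (\tilde u \tilde v)_x &= 0.
\end{align*}
The effect of this substitution is precisely to repackage the pathological $\xi_1 = 0$ and $\xi_2 = 0$ interactions (which force the mean-zero hypothesis in the standard KPV bilinear estimate) as constant-coefficient linear terms, so that $\tilde W_t + \dx^3 \tilde W + A \tilde W_x = \tilde N(\tilde W)$ for $\tilde W = (\tilde u, \tilde v)^T$, $A = \bigl(\begin{smallmatrix} 0 & c_v \\ c_v & c_u \end{smallmatrix}\bigr)$, with $\tilde N$ depending only on mean-zero functions.

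Next, since $A$ is real symmetric, I would diagonalize $A = P^T D P$ with $P$ orthogonal and $D = \mathrm{diag}(\mu_+, \mu_-)$, $\mu_\pm = \tfrac{c_u \pm \sqrt{c_u^2 + 4c_v^2}}{2}$, and introduce $\hat W = P \tilde W$, whose components obey KdV-type equations with shifted dispersions $\tau = \xi^3 - \mu_\pm \xi$. This motivates the vector-valued Bourgain space $\mathbf{X}^{s, b} = X^{s, b}_{\mu_+} \times X^{s, b}_{\mu_-}$, where
\begin{equation*}
\|w\|_{X^{s, b}_\mu} = \big\| \jb{\xi}^s \jb{\tau - \xi^3 + \mu \xi}^b \ft w(\xi, \tau) \big\|_{L^2_{\xi, \tau}(\Z/\ld \times \R)},
\end{equation*}
together with the usual $L^2_\xi L^1_\tau$ companion norm needed on $\T_\ld$. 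The linear and Duhamel estimates from \cite{BO1, KPV4, CKSTT4} transfer verbatim to each factor since $e^{-t(\dx^3 - \mu \dx)}$ is unitary on $H^s$, so the entire problem reduces to proving the bilinear estimates
\begin{equation*}
\| \dx(\hat W_i \hat W_k) \|_{X^{s, -1/2}_{\mu_j}} \lesssim \|\hat W_i\|_{X^{s, 1/2}_{\mu_i}} \|\hat W_k\|_{X^{s, 1/2}_{\mu_k}}
\end{equation*}
for every $i, j, k \in \{+, -\}$ at $s \geq -\tfrac{1}{2}$.

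The algebraic input for these estimates is the identity
\begin{equation*}
(\tau - \xi^3 + \mu_j \xi) - (\tau_1 - \xi_1^3 + \mu_i \xi_1) - (\tau_2 - \xi_2^3 + \mu_k \xi_2) = -3\xi\xi_1\xi_2 + (\mu_j - \mu_i)\xi_1 + (\mu_j - \mu_k)\xi_2
\end{equation*}
on the convolution set $\xi = \xi_1 + \xi_2$, $\tau = \tau_1 + \tau_2$. Thus the cubic resonance structure from \eqref{P1algebra} is preserved up to a linear-in-frequency drift correction that is strictly lower order than $3\xi\xi_1\xi_2$ as soon as one of the frequencies exceeds a threshold determined by $\mu_\pm$; on this high-frequency set the resonance analysis behind the KPV/CKSTT proof of \eqref{KPVZbilinear} applies without change and delivers the bound at $s = -\tfrac{1}{2}$. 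The main obstacle is the complementary low-frequency regime in which the drift correction is comparable to or larger than the cubic gain; here all frequencies are bounded by a constant depending only on $c_u, c_v$, so $\dx$ contributes only a harmless factor and the remaining bilinear expression can be controlled by crude $\ell^2$-summability together with the $\jb{\xi}^{s}$ weight. Once these bilinear estimates are in hand, the standard contraction argument in $\mathbf{X}^{s, 1/2}$ yields local existence for $\hat W$, and inverting $P$ and adding back the means recovers the claimed LWP of \eqref{MB}.
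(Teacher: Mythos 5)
Your proposal follows essentially the same route as the paper's appendix: subtracting the conserved means produces the mixed constant-coefficient linear system \eqref{mean0MB}, diagonalizing the symmetric drift matrix yields the two shifted dispersions $\xi^3 - \mu_\pm \xi$ (the paper's eigenvalues $d_j(\xi)$ of $A(\xi)$, with the same $L = \tfrac{1}{2}\sqrt{p^2+4q^2}$), and the paper's vector-valued $X^{s,b}_{p,q}$ norm reduces, via the orthogonal conjugation, exactly to your product $X^{s,b}_{\mu_+}\times X^{s,b}_{\mu_-}$ together with the same reduction of the nonlinearity to scalar bilinear estimates with bounded coefficients. The one place where the paper must work harder than your sketch suggests is the lower bound on the modified resonance function $3\xi\xi_1\xi_2 + O(L|\xi_m|)$: dominance of the cubic term is not a matter of one frequency being large but of every \emph{pairwise product} of the (nonzero, hence $\geq 1/\ld$) frequencies exceeding $L$, which the paper verifies case by case using the scaling $L = L_1/\ld^2$.
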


This work is a part of the author's Ph.D. thesis \cite{OHTHESIS} at the University of Massachusetts, Amherst.
In two forthcoming papers on this subject,
we address the global well-posedness via the $I$-method \cite{OH2}, and 
the invariant measures (i.e. the Gibbs measure as a weighted Wiener measure
for a.e. $\al \in (0, 4)$ in \cite{OH3} and the white noise for $\al = 1$ in \cite{OH4}) 
and  
the almost surely global well-posedness on the statistical ensembles in the periodic setting.

This paper is organized as follows:
In Section 2, we introduce some standard notations.
In Section 3, we make the precise statements of the bilinear estimates \eqref{bilinear1} and \eqref{bilinear2} on $\T_\ld$ for $0< \al < 1$
along with the basic linear estimates.
Then, we prove Theorem \ref{THM:YLWP}.
In Section 4, we prove the sharp bilinear estimates from Section 3 along with the counterexamples below certain regularities.
In Section 5, we prove Theorem \ref{THM:LWPonR} by proving the corresponding sharp bilinear estimates in the non-periodic setting.
In Section 6, we prove the ill-posedness results, namely Theorems \ref{THM:illposedonT} and \ref{THM:illposedonR}.
In the appendix, we introduce the vector-valued $X^{s, b}$ spaces, and sketch the proof of Theorem \ref{MTHM:mean0LWP}.
Please see \cite{OHTHESIS} for the full details.

\smallskip

\noindent
{\bf Acknowledgements:} 
The author would like to express his sincere gratitude to his advisor  
Prof. Andrea R. Nahmod. He acknowledges summer support under Prof. Nahmod's  NSF grant DMS 0503542.
He is also grateful to the anonymous refree for his comments.

\section{Notation}

In the periodic setting on $\T_\ld$, the spatial Fourier domain is $\Z/\ld$.
Let $d\xi^\ld$ be the normalized counting measure on $\Z/\ld$, 
and we say $f \in L^p(\Z/\ld)$, $1 \leq p < \infty$ if
\[ \| f \|_{L^p(\mathbb{Z}/\ld)} = \bigg( \int_{\mathbb{Z}/\ld} |f(\xi)|^p d\xi^\ld \bigg)^\frac{1}{p}  
:= \bigg( \frac{1}{2\pi\ld} \sum_{\xi \in \mathbb{Z}/\ld} |f(\xi)|^p \bigg)^\frac{1}{p} < \infty.\]

\noindent
If $ p = \infty$, we have the obvious definition involving the essential supremum.
For $f \in \mathcal{S}(\mathbb{R})$, the Fourier transform of $f$ is defined as
$ \ft{f}(\xi) = \int_{\mathbb{R}} e^{-i x \xi} f(x) dx,$
and its inverse Fourier transform is defined as $\invft{f}(\xi) = \frac{1}{2\pi}\ft{f}(-\xi)$.
If $f \in L^2(\mathbb{T}_\ld)$, 
then the Fourier transform of $f$ is defined as
$ \ft{f}(\xi) = \int_{0}^{2\pi\ld} e^{-i x \xi} f(x) dx$, 
 where $ \xi \in \mathbb{Z}/\ld$, 
and we have the Fourier inversion formula
\[ f(x) = \int_{\mathbb{Z}/\ld} e^{i x \xi} \ft{f}(\xi) d\xi^\ld  = \frac{1}{2\pi\ld} \sum_{\xi \in \mathbb{Z}/\ld} e^{i x \xi} \ft{f}(\xi).\]

\noindent
If the function depends on both $x$ and $t$, we  use ${}^{\wedge_x}$ 
(and ${}^{\wedge_t}$) to denote the spatial (and temporal) Fourier transform, respectively.
However, when there is no confusion, we  simply use ${}^\wedge$ to denote the spatial Fourier transform,
temporal Fourier transform, and  the space-time Fourier transform, depending on the context.

Let $\jb{\, \cdot \,} = 1 + | \cdot |$. 
For $Z= \mathbb{R} $ or $\mathbb{T}_\ld$, 
we define  $X^{s, b}(Z \times \mathbb{R})$ and $X^{s, b}_\al (Z \times \mathbb{R})$
by the norms
\begin{align} 
\|u \|_{X^{s, b}(Z \times \mathbb{R})} & = \| \jb{\xi}^s \jb{\tau - \xi^3}^b \ft{u}(\xi, \tau) \|_{L^2_{\xi, \tau}(Z^\ast \times \mathbb{R})}
\\ 
\|v \|_{X^{s, b}_\al(Z \times \mathbb{R})} & = \| \jb{\xi}^s \jb{\tau - \al \, \xi^3}^b \ft{v}(\xi, \tau) \|_{L^2_{\xi, \tau}(Z^\ast \times \mathbb{R})},
\end{align}

\noindent
where $Z^\ast = \R$ if $Z = \R$ and $Z^\ast = \Z/\ld$ if $Z = \T_\ld$.
Given any time interval $I \subset \mathbb{R}$, we define the local in time $X^{s, b}(Z \times I )$ by
\[ \|u\|_{X_I^{s, b}} = \|u \|_{X^{s, b}(Z \times I )} = \inf \big\{ \|\wt{u} \|_{X^{s, b}(Z \times \mathbb{R})}: {\wt{u}|_I = u}\big\}.\]

\noindent
We define the local in time $X^{s, b}_\al (Z \times I )$ analogously.
Also, in dealing with a product space of two copies of a Banach space $X$, 
we may use $X\times X$ and $X$ interchangeably.

Lastly,
let $\eta \in C^\infty_c(\mathbb{R})$ be a smooth cutoff function supported on $[-2, 2]$ with $\eta \equiv 1$ on $[-1, 1]$.
We use $c,$ $ C$ to denote various constants, usually depending only on $s, b$, and  $\al$.
If a constant depends on other quantities, we  make it explicit.
We use $A\lesssim B$ to denote an estimate of the form $A\leq CB$.
Similarly, we use $A\sim B$ to denote $A\lesssim B$ and $B\lesssim A$
and use $A\ll B$ when there is no general constant $C$ such that $B \leq CA$.
We also use $a+$ (and $a-$) to denote $a + \eps$ (and $a - \eps$), respectively,  
for arbitrarily small $\eps \ll 1$.

\section{Local Well-Posedness on $\T_\ld$, $0< \al <1$}

In this section, we first introduce the spaces for the solutions and the Duhamel terms 
along with known linear estimates and embeddings.
Then, we give the precise statements for the bilinear estimates whose proofs are presented in the next section.
At the end of the section, we present the proof of Theorem \ref{THM:YLWP}.

\subsection{Basic Function Spaces and Linear Estimates}
Let $S(t) = e^{-t\dx^3}$ and $S_\al (t) = e^{-\al t\dx^3}$.
By Duhamel principle, $(u, v)$ is a solution to \eqref{MB} on $[-1, 1]$ if and only if
\begin{equation*}
\begin{cases}
u(t) = \eta(t)S(t) u_0- \eta(t)\int_0^t  S(t- t')  F(t') dt'\\
v(t) = \eta(t)S_\al(t) v_0 - \eta(t) \int_0^t S_\al (t- t') G (t') dt'
\end{cases}
\end{equation*}

\noindent
for $-1 \leq t \leq 1$, where
$F(t') = \eta(t') \dx \big(\frac{v^2}{2}\big) (t')$ and $G(t') = \eta(t') \dx \big(uv\big)(t')$.
The proof of local well-posedness  is based  
on the iteration in the spaces $X^{s, \frac{1}{2}}\times X_{\al}^{s, \frac{1}{2}}$.
However, this space barely fails to be in $C(\R_t; H^s_x \times H^s_x)$.
Thus, we introduce slightly smaller spaces $Y^s$ and $Y_\al^s$ defined via the norm
\begin{align*} 
\|u\|_{Y^{s} } & = \|u\|_{X^{s, \frac{1}{2}}} + \| \jb{\xi}^s  \ft{u}(\xi, \tau) \big\|_{L^2_\xi(\mathbb{Z}/\ld) L^1_\tau( \mathbb{R})} 
\\
 \|v\|_{Y_{\al}^{s}} & = \|v\|_{X_\al^{s,\frac{1}{2}}} + \big\| \jb{\xi}^s  \ft{v}(\xi, \tau) \big\|_{L^2_\xi(\mathbb{Z}/\ld) L^1_\tau ( \mathbb{R})}. 
\end{align*}

\noindent
Then, if $(u, v) \in Y^{s} \times Y_{\al}^{s}$, then $(u, v) \in C(\R_t; H^s_x \times H^s_x)$.
Also, define the spaces $Z^s$ and $Z^s_\al$ via the norm
\begin{align*} 
\|u\|_{Z^{s} } & = \|u\|_{X^{s, -\frac{1}{2}}} + \| \jb{\xi}^s \jb{\tau - \xi^3}^{-1} \ft{u}(\xi, \tau) \|_{L^2_\xi(\mathbb{Z}/\ld) L^1_\tau(\mathbb{R})}
\\ 
 \|v\|_{Z_{\al}^{s}} & = \|v\|_{X_\al^{s, -\frac{1}{2}}} + \| \jb{\xi}^s \jb{\tau - \al \xi^3}^{-1} 
  \ft{v}(\xi, \tau) \|_{L^2_\xi(\mathbb{Z}/\ld) L^1_\tau ( \mathbb{R})} .
\end{align*}

Next, we list known linear estimates and several useful lemmata.
For the proofs, please see \cite{BO1}, \cite{CKSTT4}, \cite{KPV6}, \cite{OHTHESIS}. 

\begin{lemma} Let  $\phi$ be a periodic function on $\mathbb{T}_\ld$.  Then, we have 
\begin{equation}
\| \eta(t)S(t) \phi \|_{X^{s, b}} \lesssim \|\phi\|_{H^s} \text{ and } \ \| \eta(t)S_\al(t) \phi \|_{X_\al^{s, b}} \lesssim \|\phi\|_{H^s}.
\end{equation}

\end{lemma}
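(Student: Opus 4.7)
The plan is to compute the space-time Fourier transform of $\eta(t) S(t) \phi$ explicitly and then evaluate the $X^{s,b}$ norm directly; this is the standard textbook proof for the linear estimate associated with a Bourgain space adapted to the symbol $\xi^3$ (resp.\ $\al \xi^3$).

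First, I would use that $S(t) = e^{-t \dx^3}$ acts in Fourier space as multiplication by $e^{it\xi^3}$, so that the spatial Fourier transform of $S(t)\phi$ is $e^{it\xi^3}\ft{\phi}(\xi)$. Taking the temporal Fourier transform of $\eta(t) e^{it\xi^3}$ produces $\ft{\eta}(\tau - \xi^3)$, giving the joint space-time Fourier transform
\[
\ft{\eta(t) S(t) \phi}(\xi, \tau) = \ft{\phi}(\xi)\, \ft{\eta}(\tau - \xi^3).
\]
Substituting into the definition of the $X^{s,b}$ norm yields
\[
\|\eta(t) S(t) \phi\|_{X^{s,b}}^2 = \int_{\Z/\ld} \jb{\xi}^{2s} |\ft{\phi}(\xi)|^2 \bigg( \int_\R \jb{\tau - \xi^3}^{2b} |\ft{\eta}(\tau - \xi^3)|^2 \, d\tau \bigg) d\xi^\ld.
\]

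Next, I would perform the change of variable $\tau \mapsto \tau + \xi^3$ in the inner integral, which is legitimate for each fixed $\xi$ and decouples the $\tau$-integral from $\xi$ entirely. The inner integral reduces to $\|\jb{\tau}^b \ft{\eta}\|_{L^2_\tau}^2$, which is a finite constant $C_{b,\eta}^2 < \infty$ since $\eta$ is Schwartz. Pulling this constant out gives $C_{b,\eta}^2 \|\phi\|_{H^s}^2$, proving the first estimate.

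The second estimate is obtained by the same argument with $\xi^3$ replaced by $\al \xi^3$ everywhere: the Fourier transform becomes $\ft{\phi}(\xi)\, \ft{\eta}(\tau - \al \xi^3)$, and the shift $\tau \mapsto \tau + \al \xi^3$ in the $X^{s,b}_\al$ norm again removes the $\xi$-dependence from the inner integral. There is no essential obstacle here — the only mild subtlety is ensuring that the change of variable is valid pointwise in $\xi$ before any summation/integration over $\xi$, which is automatic from Fubini applied to the nonnegative integrand.
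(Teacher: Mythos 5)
Your proof is correct and is exactly the standard argument: the paper does not prove this lemma itself but defers to \cite{BO1}, \cite{CKSTT4}, \cite{KPV6}, where the proof is precisely the factorization $\ft{\eta(t)S(t)\phi}(\xi,\tau) = \ft{\phi}(\xi)\,\ft{\eta}(\tau-\xi^3)$ followed by the shift $\tau \mapsto \tau + \xi^3$ (resp.\ $\tau + \al\xi^3$) and the rapid decay of $\ft{\eta}$. No gaps.
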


\noindent
By the standard computation, the Duhamel terms satisfy

\begin{lemma}

\begin{equation*} \label{duhamelestimate1}
\bigg\| \eta(t)\int_0^t  S(t- t') F(t') dt' \bigg\|_{Y^s} \lesssim \|F\|_{Z^s},  
\text{ and }
\ \bigg\| \eta(t)\int_0^t  S_\al(t- t') G(t') dt' \bigg\|_{Y_\al^s} \lesssim \|G\|_{Z_\al^s}.
\end{equation*}

\end{lemma}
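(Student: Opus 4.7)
My plan is to reduce both inequalities to a single scalar estimate and then follow the standard Bourgain/Kenig-Ponce-Vega endpoint argument at $b = \frac{1}{2}$. Since the second inequality is obtained from the first by replacing $\dx^3$ with $\al \dx^3$, it suffices to treat the first. Taking the spatial Fourier transform of $w(x,t) = \eta(t) \int_0^t S(t-t') F(t') dt'$ and expressing the temporal integral via Fourier inversion gives
\[
\ft{w}(\xi, t) = \eta(t) e^{it\xi^3} \int \ft{F}(\xi, \sigma + \xi^3) \, \frac{e^{it\sigma} - 1}{i\sigma} \, d\sigma.
\]
A further Fourier transform in $t$ then yields $\ft{w}(\xi, \tau) = \int h(\xi, \sigma)\, K(\tau - \xi^3, \sigma)\, d\sigma$, where $h(\xi, \sigma) = \ft{F}(\xi, \sigma + \xi^3)$ and
\[
K(\mu, \sigma) = \frac{\ft{\eta}(\mu - \sigma) - \ft{\eta}(\mu)}{i\sigma},
\]
extended continuously at $\sigma = 0$.

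The main step is to split $h = h_1 + h_2$ with $h_1$ supported on $|\sigma| \leq 1$ and $h_2$ on $|\sigma| > 1$, and estimate each contribution separately. For the high-modulation piece $h_2$, I would decompose $K$ as $\ft{\eta}(\mu - \sigma)/(i\sigma) - \ft{\eta}(\mu)/(i\sigma)$. The first term is a convolution in $\sigma$: the pointwise bound $\jb{\mu}^{1/2} \leq \jb{\mu - \sigma}^{1/2} \jb{\sigma}^{1/2}$ combined with Young's inequality (using $\jb{\cdot}^{1/2} \ft{\eta} \in L^1$) gives a bound by $\|\jb{\sigma}^{-1/2} h_2\|_{L^2_\sigma}$, which after weighting by $\jb{\xi}^s$ is controlled by $\|F\|_{X^{s, -1/2}}$. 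The second term factors as $\ft{\eta}(\mu) \cdot \int h_2(\xi, \sigma)/(i\sigma)\, d\sigma$, so its contribution to the $X^{s, 1/2}$ norm is bounded by $\|\jb{\xi}^s \jb{\sigma}^{-1} h_2\|_{L^2_\xi L^1_\sigma}$, which is precisely the second piece of $\|F\|_{Z^s}$. For the low-modulation piece $h_1$, I would use the identity $(e^{it\sigma} - 1)/(i\sigma) = t \int_0^1 e^{ist\sigma}\, ds$ to rewrite $K(\mu, \sigma) = \int_0^1 \psi(\mu - s\sigma)\, ds$ with $\psi = \mathcal{F}_t[\eta(t)\, t]$. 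Since $|s\sigma| \leq 1$, the weight $\jb{\mu}^{1/2}$ is absorbed by $\jb{\mu - s\sigma}^{1/2}$ up to a constant, and the estimate reduces once more to Young/Minkowski.

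For the $L^2_\xi L^1_\tau$ component of the $Y^s$ norm, the key point is the kernel bound $\int |K(\mu, \sigma)|\, d\mu \lesssim \jb{\sigma}^{-1}$, obtained by treating $|\sigma| \leq 1$ and $|\sigma| > 1$ separately and using $\ft{\eta}, \ft{\eta}' \in L^1$. Minkowski's inequality then yields $\|\ft{w}(\xi, \cdot)\|_{L^1_\tau} \lesssim \|\jb{\sigma}^{-1} h(\xi, \cdot)\|_{L^1_\sigma}$, which after weighting by $\jb{\xi}^s$ recovers the $L^2_\xi L^1_\sigma$ piece of $\|F\|_{Z^s}$. The main subtlety, and the reason the auxiliary norms $Y^s$ and $Z^s$ are introduced, is that at the endpoint $b = \frac{1}{2}$ the naive Duhamel estimate $X^{s, -1/2} \to X^{s, 1/2}$ just barely fails: the contribution from $\ft{\eta}(\mu)/\sigma$ at large $|\sigma|$ cannot be absorbed into $X^{s, -1/2}$ alone. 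The $L^2_\xi L^1_\tau$ correction in $Z^s$ is engineered exactly to control this leftover piece, while the matching correction in $Y^s$ ensures the output lies in $C_t H^s_x$. Tracking this trade-off is the only real obstacle; everything else reduces to Young's inequality and Schwartz decay of $\ft{\eta}$.
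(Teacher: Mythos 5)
Your argument is correct and is precisely the ``standard computation'' the paper invokes without proof (citing \cite{BO1}, \cite{CKSTT4}, \cite{KPV6}): the Fourier representation of the Duhamel term with kernel $K(\mu,\sigma)=(\ft\eta(\mu-\sigma)-\ft\eta(\mu))/(i\sigma)$, the low/high modulation split at $|\sigma|=1$, the identification of the $\ft{\eta}(\mu)\int h_2/\sigma\,d\sigma$ term with the $L^2_\xi L^1_\tau$ component of $Z^s$, and the kernel bound $\int|K|\,d\mu\lesssim\jb{\sigma}^{-1}$ for the $L^2_\xi L^1_\tau$ part of $Y^s$ all match the argument in the cited references, and the reduction of the $S_\al$ case to the $S$ case is legitimate since nothing in the computation uses the specific phase $\xi^3$.
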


\noindent
Also, we list several embeddings of $X^{s, b}$ and $X_\al^{s, b}$ spaces.

\begin{lemma} \label{embed1}
Let $f(x, t)$ be a function on $ \mathbb{T}_\ld \times \mathbb{R} $.
Then, we have 
\begin{equation*} 
 \| f \|_{L^4_t L^2_x} \lesssim \| f \|_{ X^{0, \frac{1}{4}}} \text{ and } \ \| f \|_{L^4_t L^2_x} \lesssim \| f \|_{X_\al^{0, \frac{1}{4}}} .
\end{equation*}
\end{lemma}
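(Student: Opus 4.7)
My plan is to reduce the estimate to the one-dimensional Sobolev embedding $H^{1/4}(\R) \hookrightarrow L^4(\R)$ in the time variable, applied $\xi$-frequency by $\xi$-frequency. The argument is a standard three-step transference of the Sobolev embedding into the $X^{s,b}$ calculus.

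First, I would apply Plancherel in the spatial variable $x$ on $\T_\ld$ to write $\|f(\cdot, t)\|_{L^2_x} = \|\ft{f}(\xi, t)\|_{L^2_\xi(\Z/\ld)}$, and then invoke Minkowski's integral inequality to swap the order of integration,
\[
\|f\|_{L^4_t L^2_x} = \|\ft{f}(\xi, t)\|_{L^4_t L^2_\xi} \le \|\ft{f}(\xi, t)\|_{L^2_\xi L^4_t}.
\]
The correct direction of Minkowski holds because the outer exponent $4$ dominates the inner exponent $2$.

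Second, for each fixed $\xi \in \Z/\ld$, I would express $\ft{f}(\xi, t)$ via the inverse temporal Fourier transform, factor out the unimodular phase $e^{it\xi^3}$, and perform the change of variables $\sigma = \tau - \xi^3$, so that
\[
|\ft{f}(\xi, t)| = \Big| \frac{1}{2\pi} \int_\R e^{it\sigma} \, \ft{f}(\xi, \sigma + \xi^3) \, d\sigma \Big|.
\]
The Sobolev embedding $H^{1/4}(\R_t) \hookrightarrow L^4(\R_t)$ in the $t$ variable, applied to this representation, then yields
\[
\|\ft{f}(\xi, t)\|_{L^4_t} \lesssim \|\jb{\tau - \xi^3}^{1/4} \ft{f}(\xi, \tau)\|_{L^2_\tau}.
\]

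Finally, I would square this bound and integrate against $d\xi^\ld$ on $\Z/\ld$ to recover the $X^{0, 1/4}$ norm on the right-hand side. The $X_\al^{0, 1/4}$ version is handled by exactly the same argument: replacing $\xi^3$ with $\al \xi^3$ only alters the unimodular phase to $e^{it\al\xi^3}$ and the shift to $\sigma = \tau - \al\xi^3$, neither of which affects the $L^4_t$ norm or the Sobolev embedding step. No serious obstacle is expected; the only mildly delicate point is confirming the direction of Minkowski's inequality noted above.
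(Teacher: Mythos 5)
Your argument is correct and is precisely the standard transference proof (Plancherel in $x$, Minkowski with $4\geq 2$, then the one-dimensional Sobolev embedding $H^{1/4}(\R_t)\hookrightarrow L^4(\R_t)$ after modulating away the phase $e^{it\xi^3}$ or $e^{it\al\xi^3}$); the paper does not reprove this lemma but cites the references where exactly this argument appears. No gaps.
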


\begin{lemma} \label{embed2}
Let $\ld \geq 1 $ and $\g = \max(C/\ld, 1) $.
Let $f(x, t)$ be a function on $ \mathbb{T}_\ld \times \mathbb{R} $
such that $\supp \ft{f}(\xi, t) \subset [1/\ld, \g] \text{ for all } t \in \mathbb{R}$.
Then, we have
\begin{equation*}
\| f \|_{L^4_t L^\infty_x} \lesssim \ld^{0+} \big\| | \dx |^\frac{1}{2} f \big\|_{X^{0, \frac{1}{4}}} \text{ and } \
\| f \|_{L^4_t L^\infty_x} \lesssim \ld^{0+} \big\| | \dx |^\frac{1}{2} f \big\|_{X_\al^{0, \frac{1}{4}}} .
\end{equation*}

\end{lemma}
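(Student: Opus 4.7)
The plan is to reduce this to the $L^4_t L^2_x$ Strichartz bound of Lemma \ref{embed1} by first establishing a Bernstein-type pointwise (in $t$) inequality in the spatial variable that exploits the localization of $\supp \ft{f}(\cdot, t)$.

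\smallskip

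First, for each fixed $t$, I would apply the Fourier inversion formula on $\T_\ld$ together with the Cauchy--Schwarz inequality on the support set, splitting $|\ft{f}(\xi,t)| = |\xi|^{1/2}\cdot|\xi|^{-1/2}|\ft{f}(\xi,t)|$, to obtain
\[
\|f(\cdot,t)\|_{L^\infty_x(\T_\ld)} \le \frac{1}{2\pi\ld}\sum_{\xi \in \Z/\ld \cap [1/\ld,\g]} |\ft{f}(\xi,t)| \lesssim \Big(\sum_{\xi} |\xi|^{-1} d\xi^\ld\Big)^{1/2} \big\| |\dx|^{1/2} f(\cdot,t)\big\|_{L^2_x(\T_\ld)}.
\]
The harmonic sum on $(\Z/\ld)\cap[1/\ld, \g]$ is comparable to $\log(\g\ld + 2)$. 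Since $\ld \geq 1$ and $\g = \max(C/\ld, 1)$, we have $\g\ld \lesssim \ld$, so this logarithmic factor is bounded by $\ld^{0+}$. This gives the pointwise-in-$t$ Bernstein-type estimate
\[
\|f(\cdot,t)\|_{L^\infty_x(\T_\ld)} \lesssim \ld^{0+} \big\| |\dx|^{1/2} f(\cdot,t)\big\|_{L^2_x(\T_\ld)}.
\]

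\smallskip

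Second, I would take the $L^4_t$ norm of both sides and apply Lemma \ref{embed1} to the function $|\dx|^{1/2} f$ (which, being a Fourier multiplier in $x$, preserves the relevant space-time Bourgain norm):
\[
\|f\|_{L^4_t L^\infty_x} \lesssim \ld^{0+} \big\| |\dx|^{1/2} f \big\|_{L^4_t L^2_x} \lesssim \ld^{0+}\big\| |\dx|^{1/2} f \big\|_{X^{0,\frac{1}{4}}}.
\]
The $X_\al^{0,\frac{1}{4}}$ version follows by the same argument, since the spatial Bernstein step is insensitive to the time-frequency weight $\jb{\tau - \al\xi^3}$ and Lemma \ref{embed1} is stated in both forms.

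\smallskip

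There is no real obstacle; the only point requiring care is the bookkeeping of the $\log$-loss in the harmonic sum, and the hypothesis $\g = \max(C/\ld, 1)$ together with $\ld \ge 1$ is tailored precisely so that $\log(\g\ld + 2) \lesssim \ld^{0+}$.
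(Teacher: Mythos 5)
Your proof is correct and is essentially the standard argument (the paper itself omits the proof, deferring to \cite{CKSTT4} and \cite{OHTHESIS}, where this is exactly the route taken): Cauchy--Schwarz on the frequency support gives the Bernstein-type bound with a harmonic-sum factor $\big(\sum_{k=1}^{\g\ld} k^{-1}\big)^{1/2} \sim \log(\ld+2)^{1/2} \lesssim \ld^{0+}$ since $\g\ld = \max(C,\ld) \lesssim \ld$, and then Lemma \ref{embed1} applied to $|\dx|^{1/2}f$ finishes, with the $X_\al$ case identical because the modulation weight plays no role in the spatial step.
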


\begin{lemma} \label{embed3}
Let $f$ be as in Lemma \ref{embed2}. Then, we have
\begin{equation*} 
\| f \|_{L^2_t L^\infty_x} \lesssim \ld^{0+} \big\| | \dx |^\frac{1}{2} f \big\|_{ L^2_{x, t}} .
\end{equation*}
\end{lemma}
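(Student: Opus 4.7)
My plan is to reduce the estimate to a pointwise-in-$t$ Sobolev-type embedding in the spatial variable, exploiting the frequency localization $\supp \ft{f}(\cdot,t) \subset [1/\ld,\g]$, and then simply integrate in $t$ (since the left-hand side takes the $L^2$ in $t$ outside of the $L^\infty$ in $x$).

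First, I would fix $t \in \R$ and use the periodic Fourier inversion formula, which gives
\[
|f(x,t)| \;\leq\; \int_{\Z/\ld} |\ft{f}(\xi,t)|\, \chi_{[1/\ld,\g]}(\xi)\, d\xi^\ld.
\]
Next, I would apply Cauchy--Schwarz with the weight $|\xi|^{-1/2}\cdot |\xi|^{1/2}$ to obtain
\[
\|f(\cdot,t)\|_{L^\infty_x} \;\leq\; \bigg( \int_{\Z/\ld} \tfrac{1}{|\xi|}\chi_{[1/\ld,\g]}(\xi)\, d\xi^\ld \bigg)^{\!1/2}
\big\| \,|\dx|^{1/2} f(\cdot,t)\,\big\|_{L^2_x},
\]
where the second factor is exactly the desired spatial norm (by Parseval on $\T_\ld$).

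The main point is then to control the first factor. Since $\Z/\ld \cap [1/\ld,\g]$ consists of the points $k/\ld$ for $k=1,\dots,\lfloor \g\ld\rfloor$, and since $d\xi^\ld$ is the counting measure weighted by $(2\pi\ld)^{-1}$, this integral equals $\tfrac{1}{2\pi}\sum_{k=1}^{\lfloor \g\ld\rfloor}\tfrac{1}{k} \lesssim \log(\g\ld)$. Using $\g = \max(C/\ld,1)$ and $\ld\geq 1$, we have $\g\ld \lesssim \ld$, so the factor is $\lesssim \log\ld \lesssim \ld^{0+}$, as the notation $\ld^{0+}$ absorbs any such logarithmic loss.

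Finally, squaring this pointwise-in-$t$ bound and integrating in $t$ yields
\[
\|f\|_{L^2_t L^\infty_x}^2 \;\lesssim\; \ld^{0+} \int_{\R} \big\| \,|\dx|^{1/2} f(\cdot,t)\,\big\|_{L^2_x}^2\, dt \;=\; \ld^{0+} \big\| \,|\dx|^{1/2} f\,\big\|_{L^2_{x,t}}^2,
\]
which gives the claim after taking square roots. There is no serious obstacle; the only point worth being careful about is the bookkeeping for the normalized counting measure $d\xi^\ld$ so that the logarithmic sum is correctly identified and absorbed into the $\ld^{0+}$ loss.
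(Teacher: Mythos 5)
Your argument is correct: the pointwise-in-$t$ Cauchy--Schwarz on the Fourier side with the weight $|\xi|^{\pm 1/2}$, the logarithmic bound $\frac{1}{2\pi}\sum_{k=1}^{\lfloor \g\ld\rfloor} k^{-1} \lesssim 1 + \log(\g\ld) \lesssim \ld^{0+}$ coming from the frequency localization, and the final integration in $t$ are exactly the standard Bernstein-type argument that the paper relies on via its references (the paper itself omits the proof). The only cosmetic point is that the sum is bounded by $1+\log(\g\ld)$ rather than $\log(\g\ld)$ alone (relevant only for $\ld$ near $1$), which is of course absorbed into the $\ld^{0+}$.
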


\noindent
Moreover, we have the $L^4$ Strichartz estimate due to Bourgain \cite{BO1}. 
\begin{lemma} \label{L^4strichartz}
Let $\ld \geq 1$.
Let $f(x, t)$ be a function on $ \mathbb{T}_\ld \times \mathbb{R} $. 
Then, we have 
\begin{equation*} 
 \| f \|_{L^4_{x, t}} \lesssim \| f \|_{ X^{0, \frac{1}{3}}} \text{ and } \ \| f \|_{L^4_{x, t}} \lesssim \| f \|_{X_\al^{0, \frac{1}{3}}},
\end{equation*}
where the implicit constants $C(\ld)$ and $ C_\al(\ld)$ are decreasing functions of $\ld$.
In particular, we have $C(\ld) \leq C(1)$ and $C_\al(\ld) \leq C_\al(1)$  for $\ld \geq 1$.

\end{lemma}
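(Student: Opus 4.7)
\smallskip

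\noindent
\textbf{Proof proposal.} The plan is to carry out the classical Bourgain $L^4$ argument for periodic KdV, keeping track of the period $\ld$. By Plancherel on $\T_\ld \times \R$ together with the identity $\|f\|_{L^4_{x,t}}^2 = \|f^2\|_{L^2_{x,t}}$ and the convolution formula $\widehat{f^2} = \ft{f} \ast \ft{f}$ (with respect to the normalized measure $d\xi^\ld \, d\tau$), it suffices to bound
\[
\left\| \int \ft{f}(\xi_1, \tau_1)\, \ft{f}(\xi - \xi_1, \tau - \tau_1) \, d\xi_1^\ld \, d\tau_1 \right\|_{L^2_{\xi, \tau}(\Z/\ld \times \R)}
\lesssim \| f \|_{X^{0, 1/3}}^2.
\]
Set $F(\xi, \tau) = \jb{\tau - \xi^3}^{1/3} \ft{f}(\xi, \tau)$ so that $\|F\|_{L^2} = \|f\|_{X^{0, 1/3}}$, and apply Cauchy-Schwarz in $(\xi_1, \tau_1)$. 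The estimate then reduces to the uniform bound
\[
\sup_{\xi, \tau} \int_{\Z/\ld \times \R} \frac{d\xi_1^\ld \, d\tau_1}{\jb{\tau_1 - \xi_1^3}^{2/3} \jb{(\tau - \tau_1) - (\xi - \xi_1)^3}^{2/3}} \leq C(\ld),
\]
with $C(\ld)$ decreasing in $\ld$.

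First I would integrate out $\tau_1$ using the one-dimensional convolution estimate $\jb{\,\cdot\,}^{-2/3} \ast \jb{\,\cdot\,}^{-2/3}(s) \lesssim \jb{s}^{-1/3}$. Invoking the algebraic identity $\xi_1^3 + (\xi - \xi_1)^3 = \xi^3 - 3\xi \xi_1 (\xi - \xi_1)$ for $\xi_1 + (\xi - \xi_1) = \xi$, this reduces the problem to estimating
\[
\sup_{\xi, \mu} \int_{\Z/\ld} \frac{d\xi_1^\ld}{\jb{\mu + 3 \xi \xi_1 (\xi - \xi_1)}^{1/3}}, \qquad \mu = \tau - \xi^3.
\]
Next I would perform a dyadic decomposition in the size $N \sim 2^j$ of $\mu + 3\xi \xi_1 (\xi - \xi_1)$ and count: because the map $\xi_1 \mapsto 3\xi \xi_1 (\xi - \xi_1)$ is a quadratic in $\xi_1$, the set of $\xi_1 \in \Z/\ld$ for which this quadratic lies in an interval of length $N$ has cardinality $\lesssim 1 + (N\ld/|\xi|)^{1/2}$. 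Weighting by the measure $d\xi^\ld = (2\pi\ld)^{-1}(\text{counting})$ produces the factor $\ld^{-1/2}$ that makes the $j$-sum converge and furnishes the monotone dependence $C(\ld) \leq C(1)$. The case of $X_\al^{0,1/3}$ is identical since $\al \xi_1^3 + \al(\xi - \xi_1)^3 = \al \xi^3 - 3\al \xi \xi_1 (\xi - \xi_1)$ gives the same quadratic structure up to the harmless factor $\al$.

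The main obstacle is the counting step: one must handle the degenerate regimes where $\xi$ is small (so the quadratic degenerates) or where $\xi_1 (\xi - \xi_1)$ is nearly constant, splitting the integration domain accordingly, and then check that each sub-region contributes a constant that is either uniformly bounded or decays in $\ld$. Once this is done, summing the dyadic pieces in $N$ against the weight $N^{-1/3}$ yields a finite bound, and comparing with the $\ld = 1$ estimate (via restricting to $\xi_1$ in a subgroup $\Z \subset \Z/\ld$) confirms $C_\al(\ld) \leq C_\al(1)$.
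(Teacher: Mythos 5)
The paper does not prove this lemma at all: it is quoted as a known result of Bourgain, with the reader referred to \cite{BO1}, \cite{CKSTT4}, \cite{OHTHESIS}. So your attempt has to be judged against the standard proof in those references, and there it has a genuine gap. The step ``apply Cauchy--Schwarz in $(\xi_1,\tau_1)$ and reduce to the uniform bound
$\sup_{\xi,\tau}\int \jb{\tau_1-\xi_1^3}^{-2/3}\jb{\tau-\tau_1-(\xi-\xi_1)^3}^{-2/3}\,d\xi_1^\ld\,d\tau_1 \leq C(\ld)$''
is a reduction to a \emph{false} statement: that supremum is infinite. After your (correct) $\tau_1$-integration the kernel becomes $\int_{\Z/\ld}\jb{\mu+3\xi\xi_1(\xi-\xi_1)}^{-1/3}\,d\xi_1^\ld$; at $\xi=0$ the integrand is the constant $\jb{\mu}^{-1/3}$ and the sum over $\xi_1$ diverges outright, and even for $\xi\neq 0$ the integrand decays only like $|\xi|^{-1/3}|\xi_1|^{-2/3}$, which is not summable over $\Z/\ld$. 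Your own dyadic count confirms this: the set where the quadratic lies at height $\sim N$ has cardinality growing like $(N/|\xi|)^{1/2}$, and $\sum_N N^{-1/3}N^{1/2}$ diverges; a prefactor depending only on $\ld$ (your claimed $\ld^{-1/2}$, which is in any case the wrong power --- the count is $1+\ld(N/|\xi|)^{1/2}$, so the normalized measure is $\ld^{-1}+(N/|\xi|)^{1/2}$ with no gain on the main term) cannot make a sum that diverges in $N$ converge. This is precisely the reason $b=1/3$ lies below the $b>\frac12$ threshold at which the Kenig--Ponce--Vega ``sup of the kernel'' scheme (which you are transplanting from the real-line bilinear estimates) is applicable.

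The proof that actually works (Bourgain \cite{BO1}; see also \cite{CKSTT4}) localizes the \emph{modulations of the two input factors} dyadically, $\jb{\tau_1-\xi_1^3}\sim 2^{j}$ and $\jb{\tau_2-\xi_2^3}\sim 2^{j'}$, \emph{before} applying Cauchy--Schwarz. For fixed output $(\xi,\tau)$ with $\xi\neq 0$ this forces $\xi_1(\xi-\xi_1)$ into an interval of length $\lesssim 2^{\max(j,j')}/|\xi|$, so the admissible $(\xi_1,\tau_1)$-set has finite measure $\lesssim 2^{\min(j,j')}\big(\ld^{-1}+(2^{\max(j,j')}/|\xi|)^{1/2}\big)$, which is where the monotonicity in $\ld$ comes from. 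The small-$|\xi|$ and $\xi=0$ output frequencies, where the count is unbounded, must be handled by a different device --- pairing $\ft{f}(\xi_1,\cdot)$ with $\ft{f}(\xi-\xi_1,\cdot)$ and using Cauchy--Schwarz in $\xi_1$ together with the $O(2^{j})$ size of the $\tau_1$-support --- and only then does one sum over the dyadic modulation indices. Your outline contains none of these three ingredients (modulation localization of the inputs, the separate small-$\xi$ argument, the final $j,j'$ summation), and without them the argument does not close.
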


\subsection{Bilinear Estimates and Local Well-Posedness on $\T_\ld$}
In this subsection, we make precise statements of the sharp bilinear estimates on $\T_\ld \times \R$,
where the constants $C_1$ and $C_2$ are expressed in term of the spatial period $\ld$.
Then, we briefly discuss how the proof of Theorem \ref{THM:YLWP} follows from the standard contraction argument  in \cite{CKSTT4},
pointing out the difference due to the constants $C_1(\ld)$ and  $C_2(\ld)$.

\begin{proposition} \label{PROP:bilinear1}
The bilinear estimate  
\begin{align} 
 \label{Zbilinear1}
\| \dx ( v_1v_2)  \|_{Z^{s} (\mathbb{T_\ld} \times \mathbb{R} )} & \lesssim C_1(\ld)
\|v_1\|_{Y_\al^{s}(\mathbb{T_\ld} \times \mathbb{R})}\|v_2\|_{Y_\al^{s}(\mathbb{T_\ld} \times \mathbb{R})} 
\end{align}

\noindent
holds for $s \geq \min ( 1, \frac{1}{2} + \frac{1}{2} \nu_{c_1} + ) $, 
where 
\begin{equation} \label{YC1}
 C_1(\ld) = \begin{cases}
\ld^{\frac{1}{2}+ \frac{1}{2}\nu_{c_1}+}, & \text{for } 0 \leq \nu_{c_1}  < 1 \\
\ld^{0+}, & \text{for } \nu_{c_1}  \geq 1. 
\end{cases} 
\end{equation}

\end{proposition}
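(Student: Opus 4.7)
The plan is to reduce the estimate \eqref{Zbilinear1} to a weighted bilinear estimate on $L^2_{\xi,\tau}$ of the form \eqref{c_1dual} with $b=\tfrac{1}{2}$, by Plancherel and duality. Both $Y^s_\alpha$ and $Z^s$ decompose into an $X^{s,\pm 1/2}$ piece plus an $L^2_\xi L^1_\tau$ piece; the $L^1_\tau$ pieces can be absorbed by Cauchy--Schwarz in $\tau$ against a $\langle\tau-\xi^3\rangle^{-1/2-}$ weight together with a Schur-type summation as in \cite{BO1,CKSTT4}, so the heart of the argument is the dual estimate on $\mathcal{B}_{s,1/2}$ introduced before \eqref{c_1dual}.

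The crucial algebraic input is the factorization
\begin{equation*}
\xi^3 - \alpha\xi_1^3 - \alpha\xi_2^3 \;=\; -3\alpha\,\xi\,(\xi_1 - c_1\xi)(\xi_1 - c_2\xi), \qquad \xi = \xi_1 + \xi_2,
\end{equation*}
obtained by substituting $\xi_2 = \xi - \xi_1$ and factoring the resulting quadratic in $\xi_1$. Combined with \eqref{resonance1}, this gives $\max(\langle\tau-\xi^3\rangle, \langle\tau_j-\alpha\xi_j^3\rangle) \gtrsim |\xi|\,|\xi_1-c_1\xi|\,|\xi_1-c_2\xi|$. After dyadic decomposition in $\langle\xi\rangle$ and in the three modulations, I would split the frequency domain into a \emph{nonresonant} region $\{|\xi_1-c_j\xi|\gtrsim|\xi|\text{ for both }j=1,2\}$ and a \emph{near-resonant} region. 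In the nonresonant region the maximum modulation is already $\gtrsim |\xi|^3$, leaving plenty of room for the single-derivative loss: a Cauchy--Schwarz argument together with the $L^4_{x,t}$ Strichartz estimate (Lemma \ref{L^4strichartz}) closes the bound for any $s \geq -1/2$ with a $\lambda^{0+}$ constant, exactly as in the KdV argument.

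The near-resonant region is where the Diophantine hypothesis on $c_1$ enters. Writing $\xi = n/\lambda$, $\xi_1 = n_1/\lambda$ and applying \eqref{lowerbd} (noting $c_1+c_2=1$, so $\nu_{c_2}=\nu_{c_1}$) yields, for any $\varepsilon>0$,
\begin{equation*}
|\xi_1 - c_j\xi| \;\gtrsim\; K\,\lambda^{-(2+\nu_{c_1})-\varepsilon}\,\langle\xi\rangle^{-(1+\nu_{c_1})-\varepsilon}, \qquad j=1,2.
\end{equation*}
Since in this region only one of the factors $|\xi_1 - c_j\xi|$ is small while the other is $\sim|\xi|$, the maximum modulation is $\gtrsim \lambda^{-(2+\nu_{c_1})-}\langle\xi\rangle^{1-\nu_{c_1}-}$, producing a gain of $\lambda^{-(1+\nu_{c_1}/2)-}\langle\xi\rangle^{-(1-\nu_{c_1})/2+}$ when used as the $\langle\cdot\rangle^{-1/2}$ weight. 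Because $\langle\xi_1\rangle\sim\langle\xi_2\rangle\sim\langle\xi\rangle$ on this region, the net derivative count $\xi\langle\xi\rangle^s/(\langle\xi_1\rangle^s\langle\xi_2\rangle^s)\sim\xi^{1-s}$ is tamed precisely when $s\geq \tfrac{1}{2} + \tfrac{1}{2}\nu_{c_1}+$, matching the claimed threshold. The residual $\tau$- and $\xi_1$-integrations close by Cauchy--Schwarz and a second application of $L^4_{x,t}$ Strichartz, and carrying out the $\lambda$-accounting (with extra $\lambda$-powers coming from the normalized counting measure on $\mathbb{Z}/\lambda$) produces the constant in \eqref{YC1}. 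In the regime $s\geq 1$ the trivial inequality $\xi\langle\xi\rangle^s \leq \langle\xi_1\rangle^s\langle\xi_2\rangle^s$ already absorbs the derivative without any modulation gain, so the constant collapses to $\lambda^{0+}$ uniformly in $\nu_{c_1}$, giving the second branch.

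The main obstacle I anticipate is the careful $\lambda$-bookkeeping in the Diophantine step: the standard bound \eqref{lowerbd} is phrased for integer denominators while our frequencies live in $\mathbb{Z}/\lambda$, and tracking how this rescaling propagates through the dyadic decomposition, the normalized counting measure, and the $L^4_{x,t}$ Strichartz constant is what ultimately produces the dichotomy in \eqref{YC1}. A secondary technical annoyance is the $L^1_\tau$ components of the $Y$ and $Z$ norms, which do not fall under the direct Strichartz bound and must be treated by a separate Schur-type argument; this is routine but adds a number of auxiliary cases to the proof.
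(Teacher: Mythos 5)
Your architecture matches the paper's: the same factorization $\xi^3-\al\xi_1^3-\al\xi_2^3=-3\al\,\xi(\xi_1-c_1\xi)(\xi_1-c_2\xi)$, the same resonant/nonresonant dichotomy, the same Diophantine lower bound feeding into $\MAX^{-1/2}$, and the same threshold $s\geq\frac12+\frac12\nu_{c_1}+$ with trivial absorption for $s\geq1$. However, there are two concrete gaps.

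First, your $\ld$-accounting does not yield the stated constant. In your near-resonant region $\{|\xi_1-c_j\xi|\ll|\xi| \text{ for some } j\}$ there are $\sim\ld|\xi|$ admissible values of $\xi_1$ for each $\xi$, so closing with $L^4_{x,t}$ Strichartz is governed by the supremum of the multiplier, which by your own computation is $\ld^{1+\frac12\nu_{c_1}+}$ (the factor $\ld^{(2+\nu_{c_1})/2+}$ coming out of $\MAX^{-1/2}$ via the Diophantine bound is not cancelled by anything). To obtain $C_1(\ld)=\ld^{\frac12+\frac12\nu_{c_1}+}$ the paper shrinks the resonant core to $|\xi_1-c_j\xi|<1/\ld$, where for each $\xi$ at most four $\xi_1\in\Z/\ld$ occur: the $\xi_1$-sum collapses, the $\tfrac{1}{2\pi\ld}$ from the normalized counting measure is retained as a gain, and an $L^\infty_\xi\times L^2_\xi$ H\"older followed by the $L^4_tL^2_x$ embedding (Lemma \ref{embed1}) costs only $\ld^{1/2}$. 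The intermediate annulus $1/\ld\leq|\xi_1-c_j\xi|\lesssim|\xi|$ is then handled separately using $\MAX\gtrsim\xi^2/\ld$, which needs only $s\geq0$ and also costs $\ld^{1/2}$. Without this three-way split your argument proves the estimate only with $C_1(\ld)=\ld^{1+\frac12\nu_{c_1}+}$, and Remark \ref{JJREMARK5} makes clear that the sharper constant is needed for the subsequent scaling and global arguments.

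Second, the $L^2_\xi L^1_\tau$ component of the $Z^s$ norm is not disposed of by Cauchy--Schwarz alone. Pairing against $\jb{\tau-\xi^3}^{-\frac12-}$ converts the weight $\jb{\tau-\xi^3}^{-1}$ into $\jb{\tau-\xi^3}^{-\frac12+}$, i.e.\ it loses an $\eps$ of modulation. This is harmless when one of the input modulations is $\gtrsim\jb{\tau-\xi^3}^{1/100}$, but when both are tiny the gain from $\jb{\tau-\xi^3}$ is exactly saturated and nothing absorbs the loss. In that regime $\tau-\xi^3$ lies in the thin set $\Omega(\xi)$ of values $\G_\xi(\xi_1)+o(|\G_\xi(\xi_1)|^{1/100})$, and one needs the counting estimate $|\Omega(\xi)\cap\{|\eta|\sim M\}|\lesssim\ld M^{2/3}$ (Lemma \ref{closetocubic11}) to get $\int\jb{\tau-\xi^3}^{-1}\chi_{\Omega(\xi)}\,d\tau\lesssim\ld^{0+}$. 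This is the analogue of Lemma 7.4 of \cite{CKSTT4}, but for the quadratic $\G_\xi$ rather than the KdV cubic $3\xi\xi_1\xi_2$, and it must be proved afresh; labeling it a routine Schur-type step leaves a real hole in the argument.
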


\begin{proposition} \label{PROP:bilinear2} 
Assume the mean 0 condition on $u$. 
Then, the bilinear estimate
\begin{align}  \label{Zbilinear2}
\| \dx ( u v)  \|_{Z_\al^{s} (\mathbb{T_\ld} \times \mathbb{R} )} 
 \lesssim C_2(\ld)
\|u\|_{Y^{s}(\mathbb{T_\ld} \times \mathbb{R})}\|v\|_{Y_\al^{s}(\mathbb{T_\ld} \times \mathbb{R})} 
\end{align}

\noindent
holds for $s \geq \min ( 1, \frac{1}{2} + \frac{1}{2} \max( \nu_{d_1} ,  \nu_{d_2})+ ) $, where
\begin{equation} \label{YC2}
 C_2(\ld) = \begin{cases}
\ld^{\frac{1}{2}+ \frac{1}{2} \max( \nu_{d_1} ,  \nu_{d_2})+}, & 0 \leq \max( \nu_{d_1} , \nu_{d_2})  < 1 \\
\ld^{0+}, &  \max( \nu_{d_1} ,  \nu_{d_2})  \geq 1. 
\end{cases} 
\end{equation}

\end{proposition}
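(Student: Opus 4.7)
The plan is to follow the dual $L^2_{\xi,\tau}$ framework of Kenig--Ponce--Vega sketched for Proposition \ref{PROP:bilinear1}, with modifications dictated by the new resonance structure. By Plancherel and duality, \eqref{Zbilinear2} reduces to controlling a weighted trilinear form
\begin{equation*}
\Bigl|\sum_{\xi=\xi_1+\xi_2}\iint_{\tau=\tau_1+\tau_2}\frac{|\xi|\,\langle\xi\rangle^s\,\widehat u(\xi_1,\tau_1)\,\widehat v(\xi_2,\tau_2)\,\overline{\widehat h(\xi,\tau)}}{\langle\xi_1\rangle^s\langle\xi_2\rangle^s\,\langle\sigma\rangle^{1/2}\langle\sigma_1\rangle^{1/2}\langle\sigma_2\rangle^{1/2}}\Bigr|\lesssim C_2(\lambda)\|h\|_{L^2}\|u\|_{Y^s}\|v\|_{Y^s_\alpha},
\end{equation*}
with $\sigma=\tau-\alpha\xi^3$, $\sigma_1=\tau_1-\xi_1^3$, $\sigma_2=\tau_2-\alpha\xi_2^3$, together with analogous bounds arising from the auxiliary $L^2_\xi L^1_\tau$ pieces of $Z^s_\alpha$, $Y^s$, and $Y^s_\alpha$. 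The mean-zero assumption on $u$ removes the frequency $\xi_1=0$ from the sum.

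The algebraic core of the argument is the factorization
\begin{equation*}
\alpha\xi^3-\xi_1^3-\alpha\xi_2^3 \;=\; (\alpha-1)\,\xi_1(\xi_1-d_1\xi)(\xi_1-d_2\xi),\qquad \xi_2=\xi-\xi_1,
\end{equation*}
obtained by expanding $f(y):=\alpha-y^3-\alpha(1-y)^3$ at $y=\xi_1/\xi$ and reading off its three roots $\{0,d_1,d_2\}$ from \eqref{d_1 and d_2}. Combined with the standard bound $\max(\langle\sigma\rangle,\langle\sigma_1\rangle,\langle\sigma_2\rangle)\gtrsim|\alpha\xi^3-\xi_1^3-\alpha\xi_2^3|$, this identity is the only source of modulation gain. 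For $\xi=j/\lambda$, $\xi_1=j_1/\lambda\in\mathbb Z/\lambda$ with $j\neq 0$, the hypothesis that $d_i$ is of type $\nu_{d_i}+$ yields the lower bound
\begin{equation*}
|\xi_1-d_i\xi|\;=\;|\xi|\bigl|{\textstyle\frac{j_1}{j}}-d_i\bigr|\;\gtrsim\;\frac{|\xi|}{|j|^{2+\nu_{d_i}+}}\;=\;\lambda^{-2-\nu_{d_i}-}\,|\xi|^{-1-\nu_{d_i}-},
\end{equation*}
which quantifies how non-resonant the integer lattice is near each root $\xi_1=d_i\xi$.

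I would then dyadically decompose in $|\xi|,|\xi_1|,|\xi_2|$ and in the three modulations, separating in each piece a resonant subregion where $|\xi_1-d_i\xi|\ll|\xi|$ for some $i\in\{1,2\}$ from a non-resonant one. On the non-resonant region the factorization gives the full KdV-type modulation gain, and the estimate reduces to standard $L^4_{x,t}$ and $L^4_tL^\infty_x$ Strichartz bounds from Lemmas \ref{embed1}--\ref{L^4strichartz} after Cauchy--Schwarz in $\tau$. On each resonant subregion the Diophantine bound above turns the half-modulation factor $\langle\sigma_{\max}\rangle^{1/2}$ into a weakened gain of order $|\xi|^{(1-\nu_{d_i})/2-}$ in place of the desired $|\xi|^{3/2}$; to absorb the $|\xi|$ loss from $\partial_x$ one is then forced into the regime $s\geq\tfrac{1}{2}+\tfrac{1}{2}\nu_{d_i}+$, saturated at $s=1$. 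The maximum, rather than sum, of the two Diophantine indices enters because for $d_1\neq d_2$ the regions $\{|\xi_1-d_1\xi|\ll|\xi|\}$ and $\{|\xi_1-d_2\xi|\ll|\xi|\}$ are disjoint, so at most one Diophantine penalty is incurred per dyadic piece. Careful bookkeeping of $\lambda$-factors arising from the Diophantine bound and the counting-measure normalization on $\mathbb Z/\lambda$, combined with the uniform Strichartz constants of Lemma \ref{L^4strichartz}, yields exactly the $C_2(\lambda)$ of \eqref{YC2} after matching against the trivial $s=1$ case.

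The main obstacle is twofold. First, unlike Proposition \ref{PROP:bilinear1}, whose two resonance roots satisfy $c_1+c_2=1$ and therefore share a single Diophantine index, here $d_1+d_2=3\alpha/(\alpha-1)$ is generically irrational, so the two Diophantine bounds must be handled independently; a sharp argument must exploit the disjointness of the two resonant regions in order to recover $\max(\nu_{d_1},\nu_{d_2})$ rather than the wasteful $\nu_{d_1}+\nu_{d_2}$. Second, the contribution from very small $|\xi_1|$, just above the mean-zero gap $1/\lambda$, is delicate because the $|\xi_1|^{1/2}$ factor extracted from the modulation is at its weakest there. This regime both forces the mean-zero hypothesis on $u$ — without which the $\xi_1=0$ root would destroy all derivative gain — and pins down the precise $\lambda$-power appearing in $C_2(\lambda)$.
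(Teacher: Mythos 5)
Your proposal follows essentially the same route as the paper, which itself only sketches this proof as ``analogous to Proposition \ref{PROP:bilinear1}'': you correctly identify the cubic factorization $\al\xi^3-\xi_1^3-\al\xi_2^3=(\al-1)\xi_1(\xi_1-d_1\xi)(\xi_1-d_2\xi)$ whose root at $\xi_1=0$ forces the mean-zero hypothesis, the Diophantine lower bound near $\xi_1=d_i\xi$ playing the role of \eqref{resres222}, the disjointness of the two resonant regions explaining $\max(\nu_{d_1},\nu_{d_2})$ rather than the sum, and the same $L^4$ Strichartz machinery and $\ld$-bookkeeping. The only piece you gloss over is the counting lemma (the analogue of Lemma \ref{closetocubic11} for the cubic resonance function) needed for the $L^2_\xi L^1_\tau$ part of $Z^s_\al$ in the regime where both input modulations are tiny, but this falls under the ``analogous bounds'' you explicitly acknowledge.
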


\noindent
Moreover, we  also establish the sharpness of these estimates.
\begin{proposition} \label{PROP:counterexample1}
If $s < \min ( 1, \frac{1}{2} + \frac{1}{2} \nu_{c_1}  )$, then
the bilinear estimate
\begin{equation}
\label{scaledbilinear1}
\| \dx ( v_1 v_2)  \|_{X^{s, b-1} (\mathbb{T_\ld} \times \mathbb{R} )} 
 \lesssim 
\|v_1\|_{X_\al^{s, b}(\mathbb{T_\ld} \times \mathbb{R})}\|v_2\|_{X_\al^{s, b}(\mathbb{T_\ld} \times \mathbb{R})}
\end{equation}

\noindent
fails for any $b \in \mathbb{R}$. 
Hence, \eqref{Zbilinear1} fails for $s < \min ( 1, \frac{1}{2} + \frac{1}{2} \nu_{c_1}  )$.
\end{proposition}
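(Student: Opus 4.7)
The strategy is to exhibit, for each $b \in \R$ and each $s < \min(1, \tfrac{1}{2} + \tfrac{1}{2}\nu_{c_1})$, a sequence of test functions $(v_1^{(N)}, v_2^{(N)})$ for which the ratio of the two sides of \eqref{scaledbilinear1} tends to infinity. These will be Fourier-localised at near-resonant integer triples $(\xi, \xi_1, \xi_2)$, where ``near-resonant'' is quantified by the Diophantine type of $c_1$.

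The first step is to make the resonance modulus $R := |\xi^3 - \al \xi_1^3 - \al \xi_2^3|$ small. Writing $\xi_1 = c_1 \xi + \eps$, $\xi_2 = c_2 \xi - \eps$ and using $c_1 + c_2 = 1$ together with $\al (c_1^3 + c_2^3) = 1$ (which comes from the resonance equation \eqref{JJreseq1}) gives
\[
R \;=\; \al\bigl|\,3(c_1 - c_2)\xi^2 \eps + 3 \xi \eps^2\,\bigr| \;\lesssim\; \xi^2 |\eps|.
\]
If $c_1 \in \mathbb{Q}$, I pick $\xi$ divisible by the denominator of $c_1$; then $\eps = 0$, $R = 0$, and the relevant threshold is $s^* = 1$. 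If $c_1 \in \R \setminus \mathbb{Q}$, I invoke the definition of the minimal type index: for any $\nu < \nu_{c_1}$ and arbitrarily small $K > 0$ there are infinitely many $(m, n) \in \Z^2$ with $|c_1 n - m| < K n^{-(1+\nu)}$. Setting $\xi = n$, $\xi_1 = m$, $\xi_2 = n - m$ yields $|\eps| < K \xi^{-(1+\nu)}$ and hence $R \lesssim K \xi^{1-\nu}$.

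For the test functions I take
\[
\ft{v}_j^{(N)}(\eta, \tau) \;=\; \mathbf{1}_{\{\xi_j\}}(\eta) \, \mathbf{1}_{[\al \xi_j^3,\, \al \xi_j^3 + M]}(\tau), \qquad j = 1, 2,
\]
with a bandwidth $M = M(b) \geq 1$ to be chosen. A direct computation shows $\|v_j\|_{X_\al^{s, b}}^2 \sim \jb{\xi_j}^{2s}$ times a $b$- and $M$-dependent factor, while $\ft{v_1 v_2}(\xi, \cdot)$ is a tent of width $2M$ and height $M$ supported near $\tau = \al(\xi_1^3 + \xi_2^3)$, on which $\jb{\tau - \xi^3} \sim \jb{R} + M$. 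In the canonical case $b = \tfrac{1}{2}$ with $M = 1$ this gives ratio $\sim \xi^{2-2s} \jb{R}^{-1}$; in the irrational case this is $\sim \xi^{2-2s-(1-\nu)}$, which diverges precisely when $s < \tfrac{1+\nu}{2}$, and letting $\nu \nearrow \nu_{c_1}$ recovers $s^* = \tfrac{1}{2} + \tfrac{1}{2}\nu_{c_1}$; in the rational case $R = 0$ and the ratio $\sim \xi^{2-2s}$ blows up for every $b$ whenever $s < 1$.

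The main obstacle is obtaining a counterexample uniformly in $b \in \R$ rather than only at $b = \tfrac{1}{2}$. For general $b$ one tunes $M = M(b)$ to balance two competing $M$-powers: $\|v_j\|^2_{X_\al^{s, b}}$ grows like $M^{2b+1}$ when $b > -\tfrac{1}{2}$ and saturates below that threshold, while the numerator integral grows like $M^3$ times a weight whose size depends on the regime $M \lessgtr R$. The bookkeeping therefore splits into cases according to the signs of $2b+1$ and $1-b$; in each regime a routine calculation produces a choice of $M$ for which the ratio diverges under the hypothesis $s < \min(1, \tfrac{1}{2} + \tfrac{1}{2}\nu_{c_1})$, and this will imply the failure of \eqref{scaledbilinear1} and consequently of \eqref{Zbilinear1}.
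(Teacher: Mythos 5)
Your rational case and your $\nu_{c_1}>1$ case are sound and coincide with the paper's: there the resonance modulus $R$ is $O(1)$, all three modulations are $O(1)$ on the supports, $b$ drops out entirely, and $N^{1-s}\lesssim 1$ forces $s\geq 1$. Your identification of the near-resonant frequencies $\xi_1=[c_1N]$, $\xi_2=[c_2N]$ with $R\lesssim N^{1-\nu_{c_1}+\eps}$ is also exactly the paper's. The gap is the final claim that, when $\nu_{c_1}<1$, tuning the common bandwidth $M$ makes your \emph{symmetric} family (both inputs anchored to the curve $\tau=\al\xi_j^3$) work for every $b$. It does not. For $b>-\tfrac12$ and $1\leq M\leq R$ your ratio squared is $\sim \xi^{2-2s}R^{2b-2}M^{1-4b}$, and for $M\gtrsim R$ (needed for the output tent to reach modulation $O(1)$) it is $\lesssim \xi^{2-2s}\bigl(M^{-2b-1}+R^2M^{-4b-2}\bigr)$ with $M\gtrsim R\gtrsim \xi^{1-\nu_{c_1}-\eps}$. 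Optimizing over $M$ and over the admissible sizes of $R$, divergence is obtained only for $b> 1-\tfrac{1-s}{1-\nu_{c_1}}$ or for $s<1-2b(1-\nu_{c_1})$, never on the whole range $s<\tfrac12+\tfrac12\nu_{c_1}$. Concretely, take $\nu_{c_1}=0$ (the generic case), $s=0.47<\tfrac12$ and $b=0.45$: the two branches give at best $\xi^{0.9-2s}\to 0$ and $\xi^{2-2s-1.8}\to 0$, so no choice of $M$ and $N$ in your family contradicts \eqref{scaledbilinear1}, although the proposition asserts failure there.

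The reason is structural. The identity $(\tau-\xi^3)-(\tau_1-\al\xi_1^3)-(\tau_2-\al\xi_2^3)=-(\xi^3-\al\xi_1^3-\al\xi_2^3)$ forces one of the three modulations to be $\gtrsim R$, and for $b<\tfrac12$ the cheapest place to put it is on an \emph{input}: one must take $\ft{v_1}$ supported on a unit $\tau$-interval displaced by $\approx R$ from the curve $\tau=\al\xi_1^3$, which costs $\|v_1\|_{X^{s,b}_\al}\sim \xi^s R^{b}$, rather than on an interval of length $M\geq R$ anchored to the curve, which costs $\xi^s M^{b+\frac12}$ and so wastes a factor $R^{1/2}$. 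This displaced, asymmetric example is precisely the paper's computation with the dual operator $\wt{\mathcal B}_{s,b}$: it yields the constraint $(1-\nu_{c_1}+\eps)b\geq 1-s$, while your direct example yields $s-\nu_{c_1}+\eps\geq(1-\nu_{c_1}+\eps)b$; adding the two eliminates $b$ and gives $s\geq\tfrac12+\tfrac12(\nu_{c_1}-\eps)$ for every $\eps>0$. Without this dual family (or some equivalent mechanism that trades the output weight $\jb{\tau-\xi^3}^{1-b}$ against an input weight $\jb{\tau_1-\al\xi_1^3}^{b}$), no amount of bandwidth bookkeeping on the symmetric family will cover all $b\in\R$.
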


\begin{proposition}\label{PROP:counterexample2}
\textup{(a)} Without the mean 0 condition on $u$, 
the bilinear estimate
\begin{equation}
\label{scaledbilinear2}
\| \dx ( uv )  \|_{X_\al^{s, b-1} (\mathbb{T_\ld} \times \mathbb{R} )} 
 \lesssim 
\|u\|_{X^{s, b}(\mathbb{T_\ld} \times \mathbb{R})}\|v\|_{X_\al^{s, b}(\mathbb{T_\ld} \times \mathbb{R})}
\end{equation}
\textup{(}and hence \eqref{Zbilinear2}\textup{)}
fails for any $s, b \in \mathbb{R}$.

\noindent
\textup{(b)}
If $s < \min ( 1, \frac{1}{2} + \frac{1}{2} \max( \nu_{d_1} , \nu_{d_2})  )$, then
  \eqref{scaledbilinear2}
\textup{(}and hence \eqref{Zbilinear2}\textup{)}
fails for any $b \in \mathbb{R}$. 
\end{proposition}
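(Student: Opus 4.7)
For part (a), I would exploit the exact resonance at $\xi_1 = 0$, which is available precisely because the mean zero assumption on $u$ has been dropped: the resonance polynomial $Q_\al(\xi_1) := \al\xi^3 - \xi_1^3 - \al(\xi - \xi_1)^3$ vanishes identically along $\xi_1 = 0$. I take $\hat u_N(\xi_1, \tau_1) = \mathbf{1}_{\xi_1 = 0}\phi(\tau_1)$ and $\hat v_N(\xi_2, \tau_2) = \mathbf{1}_{\xi_2 = N}\psi(\tau_2 - \al N^3)$ with fixed Schwartz bumps $\phi, \psi$, letting $N \to \infty$. Then $\widehat{u_N v_N}(N, \tau) = (\phi * \psi)(\tau - \al N^3)$ has modulation $\tau - \al N^3$ of order one, so the $X_\al^{s, b-1}$ weight is harmless: $\|\dx(u_N v_N)\|_{X_\al^{s, b-1}} \sim N^{s+1}$, while $\|u_N\|_{X^{s, b}} \sim 1$ (from $\jb{\xi_1}^s = \jb{0}^s$) and $\|v_N\|_{X_\al^{s, b}} \sim N^s$. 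The ratio blows up like $N \to \infty$ for every $s, b \in \R$.

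For part (b), I would exploit Diophantine near-resonances at $\xi_1 = d_1\xi, d_2\xi$, the nontrivial roots of $Q_\al$. Given $\nu < \max(\nu_{d_1}, \nu_{d_2})$, pick $i$ with $\nu < \nu_{d_i}$; the definition of the minimal type index yields a sequence of integer pairs $(m_k, n_k)$ with $n_k \to \infty$ and $|m_k - d_i n_k| \lesssim n_k^{-1-\nu}$. Since $d_i n_k$ is a simple root of $Q_\al$ with $Q_\al'(d_i n_k) = O(n_k^2)$ (nonvanishing leading coefficient), Taylor expansion yields $|P_k| := |Q_\al(m_k)| \lesssim n_k^{1-\nu}$. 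Test \eqref{scaledbilinear2} against $\hat u = \mathbf{1}_{\xi_1 = m_k}\mathbf{1}_{|\tau_1 - m_k^3| \leq L}$ and $\hat v = \mathbf{1}_{\xi_2 = n_k - m_k}\mathbf{1}_{|\tau_2 - \al(n_k - m_k)^3| \leq L}$ for a scale $L = L(b)$: the RHS is $\sim n_k^{2s} L^{2b+1}$, and $\widehat{uv}(n_k, \cdot)$ is a triangle of peak and width $\sim L$ concentrated at modulation $-P_k$. For $b \geq 1/2$ taking $L = 1$ pins all the modulation at size $|P_k|$, giving ratio $\sim n_k^{1-s+(1-\nu)(b-1)}$ which blows up for $s < 1 + (1-\nu)(b-1)$, saturating at $s < 1/2 + \nu/2$ when $b = 1/2$. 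For $b \leq 1/2$ taking $L \sim |P_k|$ makes modulation zero accessible within the triangle support, and a direct integration gives ratio $\sim n_k^{1-s-2(1-\nu)b}$, blowing up for $s < 1 - 2(1-\nu)b$, saturating at $1/2 + \nu/2$ when $b = 0$. Letting $\nu \nearrow \max(\nu_{d_1}, \nu_{d_2})$ yields the claimed sharp threshold, and failure of \eqref{Zbilinear2} is immediate since $\|\cdot\|_{X_\al^{s, -1/2}} \leq \|\cdot\|_{Z_\al^s}$.

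The main technical obstacle is the ``fails for any $b \in \R$'' assertion: the two extremal choices $L = 1$ and $L \sim |P_k|$ saturate the threshold $s < 1/2 + \nu/2$ only at $b = 1/2$ and $b = 0$, respectively, whereas for intermediate $b \in (0, 1/2)$ neither single choice suffices. To bridge the gap I would interpolate by taking $L = |P_k|^\theta$ with $\theta = \theta(b) \in [0, 1]$ optimized per $b$, and carry out the modulation integral $\int \mathrm{tri}(M + P_k)^2 \jb{M}^{2(b-1)} dM$ carefully in the crossover regime where $L$ and $|P_k|$ are comparable; alternatively, one can reduce the $b \neq 1/2$ case to $b = 1/2$ via an embedding argument between $X^{s, b}$-spaces on a compact time interval combined with a time cutoff trick.
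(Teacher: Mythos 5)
Your part (a) is correct and is essentially the paper's own argument: the paper tests against $\ft{u}$ supported at $\xi_1=0$ with $O(1)$ modulation and $\ft{v}$ supported at $\xi_2=N$ on the curve $\tau=\al\xi^3$, and reads off the unbounded factor $N$ exactly as you do; using Schwartz bumps instead of characteristic functions changes nothing. Your part (b), however, has a genuine gap, and it is precisely the one you flag. A single family of direct examples, even optimized over the temporal scale $L\in[1,|P_k|]$, provably cannot reach the threshold $s<\frac12+\frac12\nu$ for all $b$ (here $\nu=\nu_{d_i}$). Carrying out your modulation integral: for $L\lesssim |P_k|$ the ratio is $\sim n_k^{1-s}|P_k|^{b-1}L^{\frac12-2b}$, and for $L\sim|P_k|$ it is $\sim n_k^{1-s}|P_k|^{-2b}$ (when $b<\frac12$); optimizing over $L$ gives the threshold $\max\big(1-2b(1-\nu),\,1+(1-\nu)(b-1)\big)$, whose minimum over $b$ occurs at $b=\frac13$ with value $\frac13+\frac{2\nu}{3}<\frac12+\frac{\nu}{2}$ whenever $\nu<1$. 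So the interpolation $L=|P_k|^\theta$ does not close the gap, and the suggested reduction of general $b$ to $b=\frac12$ via embeddings is not available either: the estimates for different $b$ are not mutually comparable (both sides move in the same direction as $b$ varies), and time-localization tricks change the statement being disproved.

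The ingredient you are missing is the one the paper uses in its proof of Proposition \ref{PROP:counterexample1} (whose part (b) analogue is the omitted proof here): a \emph{second, dual} family of counterexamples, obtained by testing the dual form \eqref{dualbilinear12} of the estimate, in which the large modulation $|P_k|\sim n_k^{1-\nu}$ is carried by one of the \emph{input} factors (weight $\jb{\cdot}^{-b}$) rather than by the output (weight $\jb{\cdot}^{-(1-b)}$). Concretely, put $\ft{h}$ at $\xi=n_k$ on $\tau=\al\xi^3$ and $\ft{v}$ at $\xi_2=n_k-m_k$ on $\tau_2=\al\xi_2^3$, and measure the output at $\xi_1=m_k$, whose modulation relative to $\tau_1=\xi_1^3$ is $\sim|P_k|$. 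The direct family forces $(1-\nu)(1-b)\ge 1-s$ and the dual family forces $(1-\nu)b\ge 1-s$; adding the two eliminates $b$ entirely and yields $s\ge\frac12+\frac12\nu$ for every $b$, with the case $\nu>1$ (and the rational case $\nu=\infty$, where $P_k=0$ exactly) handled separately to give $s\ge1$. You should also invoke both lower and upper bounds \eqref{resres22} and \eqref{resres222} on $|P_k|$, since the two constraints require controlling $\jb{P_k}$ from both sides. With the dual family added, your construction becomes the paper's proof.
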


In particular, for almost every $\al \in (0, 1)$, 
the bilinear estimates \eqref{Zbilinear1} and \eqref{Zbilinear2} hold for $s > \frac{1}{2}$.  
See Remark \ref{JJREMARK2}.
Also, see Remark \ref{JJREMARK3} for the endpoint cases $ s = \frac{1}{2} + \frac{1}{2} \nu_{c_1}$ when $\nu_{c_1} <1$
and  $ s = \frac{1}{2} + \frac{1}{2}\max( \nu_{d_1} , \nu_{d_2})$ when $\max( \nu_{d_1} , \nu_{d_2}) <1$.

\begin{proof}[Proof of Theorem \ref{THM:YLWP}]
Fix $\ld \geq 1$ and let $s_0 = \frac{1}{2} + \frac{1}{2}\max( \nu_{c_1},\nu_{d_1},\nu_{d_2})$ and $s^\ast = \max(1, s_0+)$.
Also, fix $s \geq s^\ast$. Note that
for $s \geq 1$,  \eqref{Zbilinear1} and \eqref{Zbilinear2}
hold with $\max(C_1(\ld), C_2(\ld)) = \ld^{0+}$.
When $\max( \nu_{c_1}, \nu_{d_1},  \nu_{d_2}) <1$,   \eqref{Zbilinear1} and \eqref{Zbilinear2}
hold for $s \in (s_0, 1)$ with  $\max(C_1(\ld), C_2(\ld)) = \ld^{s_0+} < \ld^\frac{3}{2}$, since $s_0 < 1$.
i.e. $\max(C_1(\ld), C_2(\ld))$ in either case is less than $\ld^\frac{3}{2}$ which is the gain of the power of $\ld$ from scaling.

Then, by replacing $\ld^{0+}$ with $\max(C_1(\ld), C_2(\ld))$ in Subsection 7.5 of \cite{CKSTT4}, 
we see that \eqref{MB} is well-posed in $H^s(\T_\ld)$, $s \geq s^\ast$, on a time interval of size $\sim 1$,
assuming that $\| (u_0, v_0) \|_{H^s}$ is sufficiently small.
i.e. $ \max(C_1(\ld), C_2(\ld)) \| (u_0, v_0) \|_{H^s(\T_\ld)} \ll 1$.

Now, let $(u_0, v_0) \in H^s(\T_\ld) \times H^s(\T_\ld)$.
Then, \eqref{MB} is well-posed in $H^s(\T_\ld)$ on a small time interval $[0, \dl]$ 
if and only if the $\s$-rescaled problem is well-posed in $H^s(\T_{\s\ld})$ on the time interval $[0, \s^3 \dl ]$.
Note that with $\beta = 0$ if $s_0 \geq 1$ and $\beta = s_0$ if $s_0 < 1$, we have
\[\max(C_1(\s\ld), C_2(\s\ld)) \|(u_0^\s, v_0^\s)\|_{H^s(\T_{\s\ld})} \leq  (\s\ld)^{\beta+} \s^{-\frac{3}{2}} \|(u_0, v_0)\|_{H^s(\T_{\ld})}
\ll 1\]

\noindent
for $s \geq s^\ast$, if $\s = \s(\ld_, \| (u_0, v_0) \|_{H^s(\T_\ld)})$ is taken to be sufficiently large.
The rest follows from the argument in \cite{CKSTT4}.
\end{proof}

\begin{remark} \label{JJREMARK5} \rm 
The constant $C(\ld)$ of the bilinear estimate \eqref{KPVZbilinear} for KdV is $\ld^{0+}$ 
and this does not cause any trouble in scaling for both local and global arguments (c.f. \cite{CKSTT4}.)
Unlike the KdV case, the LWP of \eqref{MB} without smallness assumption of the initial data crucially depends 
on the fact that  $\max(C_1(\ld), C_2(\ld)) $ 
can be controlled by $\ld^\frac{3}{2}$.
Moreover, it is essential to minimize $C_1(\ld)$ and $C_2(\ld)$
for the application of the I-method for $0 < \al < 1$ in establishing the global well-posedness of \eqref{MB}.
See \cite{OH2}.
\end{remark}

\begin{remark} \label{JJREMARK2}  \rm
From Remark \ref{JJDIO}, we see that the minimal type index $\nu_{c_1} = 0 $ for almost every $c_1 \in \mathbb{R}.$
This, in turn, implies that $\nu_{c_1} = 0$ for almost every $\al \in (0, 1)$,
 since $c_1$ is explicitly defined in terms of $\al$ as in  \eqref{c_1}.
Similarly, we have $\nu_{d_1}= \nu_{d_2} = 0$ for almost every $\al \in (0, 1)$.
Hence, the local well-posedness of \eqref{MB} in $H^s(\T) \times H^s(\T)$
for $s > \frac{1}{2}$ for almost every $\al \in (0, 1)$ follows as claimed in Theorem \ref{THM:YLWP}.

\end{remark}

\section{Diophantine Conditions in the Bilinear Estimates on $\T_\ld$}

In this section, we  present the proofs of the main bilinear estimates \eqref{Zbilinear1} and \eqref{Zbilinear2}
along with the counterexamples showing their sharpness.
For the conciseness of the presentation, we focus on \eqref{Zbilinear1}.
We  first present the construction of the counterexamples below the  regularities specified 
in Propositions \ref{PROP:counterexample1} 
since it shows the particular resonance interactions more clearly.
The counterexamples are constructed on $\T$ in the same manner as in \cite{KPV4}.
Then, we prove \eqref{Zbilinear1}  on $\T_\ld$ for $\ld \geq 1$.
We  need to go through an 
intricate argument for $\ld \geq 1$ in order to minimize the constants $C_1(\ld)$.
It follows from the proofs that the estimates hold for $s \geq 0$ as long as the functions are restricted to the domains away from the resonance sets.

Recall the numbers $c_1, c_2, d_1,$ and $ d_2$ from the resonance equations 
\eqref{JJreseq1} and \eqref{resonance2}.
i.e.  $c_1$ and $c_2$ solve
\begin{equation} \label{resonance11}
1 - \al c_1^3 - \al c_2^3 = 0
\end{equation}

\noindent
with $c_1 + c_2 = 1$, and
 $d_j$  solves
$\al - d_j^3 - \al \wt{d_j} \vphantom{|}^3 =0$ for $ j = 1, 2$, 
where $\wt{d_j} = 1 - d_j$.

Given $x \in \mathbb{R}$, let $[ x ] = $ the closest integer to $x$,
satisfying 
$[c_1 N ] + [c_2N] = c_1 N+ c_2 N = N$ and 
$[d_j N ] + [\wt{d_j} N ] = d_j N + \wt{d_j} N = N$.
(Note that $[\, \cdot \, ]$ is not the usual integer-part function.)
Now, let $\theta_N = [ c_1 N ]- c_1N  = c_2N  - [ c_2 N ]$.
Then, from the definition of the minimal type index $\nu_{c_1}$, 
for any $\eps > 0$, there exist infinitely many $N \in \mathbb{N}$ such that
\begin{equation} \label{theta_N111}
|\theta_N |= \min_{k \in \mathbb{Z}} | c_1 N - k | = N \min_{k \in \mathbb{Z}} \bigg| c_1  - \frac{k}{N} \bigg| < \frac{1}{N^{1+\nu_{c_1}-\eps}}.
\end{equation}

\noindent
Then, given any $\eps>0$, there are infinitely many $N$ such that
\begin{equation} \label{resres11}
\big|\al [c_1N ]^3  + \al [c_2N ]^3 -N^3\big| 
= | 3 \al (c_1 - c_2) N^2\theta_N + 3\al N \theta_N^2 | \lesssim N^{1-\nu_{c_1} +\eps} 
\end{equation}

\noindent for sufficiently large $N$ satisfying \eqref{theta_N111}.
On the other hand, from the definition of the minimal type index $\nu_{c_1}$, we have, for any $\eps >0$, 
\begin{equation} \label{resres111}
\big|\al [c_1N ]^3  + \al [c_2N ]^3 -N^3\big| \gtrsim N^{1-\nu_{c_1} -\eps} 
\end{equation}

\noindent
for all sufficiently large $N \in \mathbb{Z}$.
In a similar manner, we see that given any $\eps>0$, there are infinitely many $N$ such that
\begin{equation} \label{resres22}
\big| [d_j N ]^3  + \al [\wt{d_j}N ]^3 -\al N^3\big| \lesssim N^{1-\nu_{d_j} +\eps},
j = 1, 2.
\end{equation}

\noindent 
On the other hand, for any $\eps >0$, we have
\begin{equation} \label{resres222}
\big| [d_j N ]^3  + \al [\wt{d_j}N ]^3 -\al N^3\big| \gtrsim N^{1-\nu_{d_j} -\eps} 
\end{equation}

\noindent
for all sufficiently large $N \in \mathbb{Z}$.
Note that \eqref{resres11} and \eqref{resres22} are used to construct the counterexamples,
whereas
\eqref{resres111}
and \eqref{resres222} play crucial roles in proving \eqref{Zbilinear1} and \eqref{Zbilinear2}.

\begin{proof}[Proof of Proposition \ref{PROP:counterexample1}]

We  construct counterexamples to the bilinear estimate \eqref{scaledbilinear1}
for $s < \min ( 1, \frac{1}{2} + \frac{1}{2} \nu_{c_1}  )$.
First, define the bilinear operator $\mathcal{B}_{s, b} $ by
\begin{equation} \label{Zdual1}
 \mathcal{B}_{s, b} (f, g) (\xi, \tau) =  \frac{\xi \jb{\xi}^s }{\jb{\tau - \xi^3}^{1-b}}
 \intt_{\substack{\xi_1+ \xi_2 = \xi \\
\tau_1 + \tau_2 = \tau}} \frac{f(\xi_1, \tau_1) g(\xi_2, \tau_2)}{\jb{\xi_1}^s 
\jb{\xi_2}^s \jb{\tau_1 - \al \xi_1^3}^b  \jb{\tau_2 - \al \xi_2^3}^b} d\xi_1^\ld d\tau_1.
\end{equation}

\noindent
For simplicity, let $\ld = 1$. 
Then, 
(\ref{scaledbilinear1}) holds
if and only if
\begin{equation} \label{dualbilinear11}
\left\| \mathcal{B}_{s,b}(f, g) \right\|_{L^2_{\xi, \tau}} \lesssim \| f \|_{L^2_{\xi_1, \tau_1}} \| g \|_{L^2_{\xi_2, \tau_2}} . 
\end{equation}

\noindent
\textbf{$\bullet$ Case (1):} $c_1, c_2 \in \mathbb{Q}$ 

Then, $\nu_{c_1} = \infty$, i.e. $ \min ( 1, \frac{1}{2} + \frac{1}{2} \nu_{c_1}  ) = 1$.
Since $c_1, c_2 \in \mathbb{Q}$, there are infinitely many $N \in \mathbb{N}$ such that $c_1N, c_2N \in \mathbb{Z}$.
For such $N$, define $f_N$ and $g_N$ by 
$f_N(\xi, \tau) = a_\xi \chi_{1} (\tau - \al\xi^3)$ and 
$g_N(\xi, \tau) = b_\xi \chi_{2} (\tau - \al \xi^3)$, 
where $\chi_{\theta}(\cdot)$ is the characteristic function of the interval $[-\theta, \theta]$ and 
$a_\xi = 1$ if and only if $\xi = c_1 N$ and $b_\xi = 1$ if and only if $\xi = c_2 N$.
Then, $ \| f_N \|_{L^2_{\xi, \tau}} \sim \|g_N \|_{L^2_{\xi, \tau}} \sim 1 $ for all such $N$.
(In the following, we  always choose $f_N$ and $g_N$ so that their $L^2_{\xi, \tau}$ norms are of size $\sim$ 1 for all $N \in \mathbb{N}$.)
Now, let $ A_N = \{(\xi, \tau) \in \mathbb{Z} \times \mathbb{R} : \xi = N, | \tau - N^3 | \leq 1\} $.
 Note that
$|\tau_1 - \al (c_1 N)^3 | \leq 1$ and $|\tau - N^3 | \leq 1$ implies
$|\tau_2 - \al (c_2 N)^3 | \leq 2$,
since $\tau = \tau_1 + \tau_2$ and $ N^3 = \al (c_1 N)^3 + \al (c_2 N)^3 $.
Then, we have
$ \| \mathcal{B}_{s, b} (f_N, g_N) \|_{L^2_{\xi, \tau}(\mathbb{Z} \times \mathbb{R})} 
 \geq \| \mathcal{B}_{s, b} (f_N, g_N) \|_{L^2_{\xi, \tau}(A_N)} 
 \sim N^{1-s} .$
\noindent
Hence, if \eqref{dualbilinear11} holds, then, 
we must have $N^{1-s} \lesssim 1$ for all $N \in \mathbb{N}$ with $c_1 N, c_2 N \in \mathbb{N}$,
where the implicit constant is independent of $N$.
Therefore,  \eqref{scaledbilinear1} fails for  $s < 1$ regardless of the value of $b$.

\noindent
\textbf{$\bullet$ Case (2):} $c_1, c_2 \in \mathbb{R} \setminus \mathbb{Q}$

First, we  show $ 1/3 \leq b \leq 2/3$ if \eqref{dualbilinear11} holds.
For $N \in \mathbb{N}$, define 
$f_N(\xi, \tau) = a_\xi \chi_{1} (\tau - \al \xi^3)$ 
and $g_N(\xi, \tau) = b_\xi \chi_{2} (\tau - \al \xi^3)$, 
where
$a_\xi = 1$ if and only if $\xi = 1$ and $b_\xi = 1$ if and only if $\xi = N-1$.
Then, on the support of $f_N(\xi_1, \tau_1)$ and $g_N(\xi_2, \tau_2)$, we have 
\begin{align*}
| \tau - N^3 | & \leq |\tau_1 - \al \cdot 1^3  | + |\tau_2 - \al (N-1)^3 | 
+ | \al + \al (N-1)^3 - N^3| \\
& \leq 2 + |(\al - 1) N^3 - 3 \al N^2 + 3 \al N |
\lesssim N^3
\end{align*}
for sufficiently large $N \in \mathbb{N}$. 
Similarly, we have $ | \tau - N^3 | \gtrsim N^3$ for large $N$.
Thus, we have $| \tau - N^3 | \sim N^3$ for all large $N$.
Now, let $ A_N = \{ (\xi, \tau) \in \mathbb{Z} \times \mathbb{R} : \xi = N,  | \tau - \al  - \al (N - 1)^3 | \leq 1 \}$.
Note that $g_N(N-1, \tau_2) \equiv 1 $ on $\supp ( f_N) \cap A_N$.
Then, we have
$\| \mathcal{B}_{s, b} (f_N, g_N) \|_{L^2_{\xi, \tau}(\mathbb{Z} \times \mathbb{R})} 
 \geq \| \mathcal{B}_{s, b} (f_N, g_N) \|_{L^2_{\xi, \tau}(A_N)} 
 \gtrsim N^{-2 + 3b}$
for all sufficiently large $N$.
Hence, if \eqref{dualbilinear11} holds for any $s \in \mathbb{R}$, 
we must have $N^{-2 + 3b} \lesssim 1$ for all large $N$, i.e.  
$b \leq 2/3$.

Note that if \eqref{dualbilinear11} holds, by duality, we see that
a similar inequality must hold for $ \wt{\mathcal{B}}_{s, b} (\cdot, \cdot)$, where
\[ \wt{\mathcal{B}}_{s, b} (f, g) (\xi_1, \tau_1) = \frac{1}{\jb{\xi_1}^s\jb{\tau_1 - \al \xi_1^3}^b}
\frac{1}{2\pi}
\sum_{\xi = \xi_1 + \xi_2} \intt_{\tau = \tau_1 + \tau_2} \frac{ \xi \jb{\xi}^s f(\xi, \tau) g(\xi_2, \tau_2)}
{\jb{\xi_2}^s \jb{\tau - \xi^3}^{1- b} \jb{\tau_2 - \al \xi_2^3}^b} d\tau. \]
i.e. \eqref{dualbilinear11} is equivalent to 
\begin{equation} \label{dualbilinear12}
\big\| \wt{\mathcal{B}}_{s,b}(f, g) \big\|_{L^2_{\xi_1, \tau_1}} \lesssim \| f \|_{L^2_{\xi, \tau}} \| g \|_{L^2_{\xi_2, \tau_2}} . 
\end{equation}

Now, let
$
f_N(\xi, \tau) = a_\xi \chi_{1} (\tau - \xi^3)$ and $g_N(\xi, \tau) = b_\xi \chi_{2} (\tau - \al \xi^3)$, 
where
$a_\xi = 1$ if and only if $\xi = N$ and $b_\xi = 1$ if and only if $\xi = N-1$.
Then, on the support of $f_N(\xi, \tau)$ and $g_N(\xi_2, \tau_2)$, we have 
\begin{align*}
| \tau_1 - \al  | & \leq |\tau - N^3  | + |\tau_2 - \al (N-1)^3 | 
+ |  \al+ \al (N-1)^3 - N^3| \\
& \leq 2 + |(\al - 1) N^3 - 3 \al N^2 + 3 \al N |
\lesssim N^3
\end{align*}
for sufficiently large $N \in \mathbb{N}$. 
Similarly, we have $ | \tau - \al | \gtrsim N^3$ for large $N$.
Thus, we have $| \tau_1 - \al | \sim N^3$.
Now, let $ \wt{A}_N = \{ (\xi_1, \tau_1) \in \mathbb{Z} \times \mathbb{R} : \xi_1 = 1, 
 | \tau_1 - N^3  + \al (N - 1)^3 | \leq 1 \}$.
Note that  $g_N(\xi_2, \tau_2) \equiv 1 $ on $\supp ( f_N) \cap \wt{A}_N$.
Then, 
we have
$\| \wt{\mathcal{B}}_{s, b} (f_N, g_N) \|_{L^2_{\xi_1, \tau_1}(\mathbb{Z} \times \mathbb{R})} 
 \geq \| \wt{\mathcal{B}}_{s, b} (f_N, g_N) \|_{L^2_{\xi_1, \tau_1}(\wt{A}_N)} 
 \gtrsim N^{1 - 3b}$ for all sufficiently large $N$.
Hence, if (\ref{dualbilinear12}) holds for any $s \in \mathbb{R}$, 
we have $N^{1 - 3b} \lesssim 1$ for all large $N$, i.e. $b \geq 1/3$. 
Therefore, if \eqref{dualbilinear12} holds for some $s \in \mathbb{R}$, we have $1/3 \leq b \leq 2/3$.

 Now, we  show that $s \geq \min(1, \frac{1}{2} + \frac{1}{2}\nu_{c_1})$ whenever  \eqref{scaledbilinear1} holds. 
Let
$
f_N(\xi, \tau) = a_\xi \chi_{1} (\tau - \al \xi^3) $
and $g_N(\xi, \tau) = b_\xi \chi_{2} (\tau - \al \xi^3), $
where
$a_\xi = 1$ if and only if $ \xi = [c_1 N ] $
and  $b_\xi = 1$ if and only if $ \xi = [ c_2 N ] $.
Let $N$ be an integer such that \eqref{theta_N111} holds.
\noindent On the support of $f_N(\xi_1, \tau_1)$ and $g_N(\xi_2, \tau_2)$, we have
\begin{align} \label{taucurve11}
|\tau - N^3| & \leq \big|\tau_1 - \al [c_1 N ]^3 \big| + \big|\tau_2 - \al [c_2 N ]^3 \big|
+ \big| \al [c_1 N ]^3 + \al [c_2 N  ]^3 - N^3 \big| \notag \\
& \leq 3 + | 3 \al (c_1 - c_2) N^2\theta_N + 3\al N \theta_N^2 | 
\lesssim N^2 \theta_N < N^{1-\nu_{c_1}+\eps}, 
\end{align}

\noindent
for sufficiently large $N$ satisfying \eqref{theta_N111}.
i.e. $\jb{\tau - N^3} \lesssim \max( N^{1-\nu_{c_1}+\eps}, 1)$. 
Now, let $ A_N = \{ (\xi, \tau) \in \mathbb{Z} \times \mathbb{R} : \xi = N, 
 \big| \tau - \al [c_1 N ]^3 - \al [c_2 N ]^3 \big| \leq 1 \}$.
Note that  $g_N(\xi_2, \tau_2) \equiv 1 $ on $\supp ( f_N) \cap A_N$.
Then, 
we have
$ \mathcal{B}_{s, b}  (f_N, g_N) (N, \tau) 
 \gtrsim 
 N^{1-s}\max( N^{1-\nu_{c_1}+\eps}, 1)^{b-1}$ 
on $A_N$.

If $\nu_{c_1} > 1$, then we have $\nu_{c_1} - \eps > 1$ for sufficiently small $\eps > 0$.
i.e. $\max( N^{1-\nu_{c_1}+\eps}, 1) = 1$.
Then, we have $\| \mathcal{B}_{s, b} (f_N, g_N) \|_{L^2_{\xi, \tau}(\mathbb{Z} \times \mathbb{R})} 
\gtrsim N^{1-s } .$
Hence, if (\ref{dualbilinear11}) holds, then we must have $ s \geq 1 = \min(1, \frac{1}{2} + \frac{1}{2}\nu_{c_1})$.
If $\nu_{c_1} \leq 1$, then $\nu_{c_1} - \eps < 1$. 
i.e. $\max( N^{1-\nu_{c_1}+\eps}, 1) =  N^{1-\nu_{c_1}+\eps}$,
and thus 
$\| \mathcal{B}_{s, b} (f_N, g_N) \|_{L^2_{\xi, \tau}(\mathbb{Z} \times \mathbb{R})} 
\gtrsim N^{-s+ \nu_{c_1} - \eps -(1-\nu_{c_1}+\eps)b } .$
Hence, if (\ref{dualbilinear11}) holds, 
we must have $s - \nu_{c_1} + \eps \geq  (1-\nu_{c_1}+\eps) b$.

Now, let
$f_N(\xi, \tau) = a_\xi \chi_{1} (\tau - \xi^3) $
and $g_N(\xi, \tau) = b_\xi \chi_{2} (\tau - \al \xi^3)$, 
where
$a_\xi = 1$ if and only if $ \xi = N$
and $ b_\xi = 1$ if and only if $\xi = [ c_2 N ]$.
Then, on the support of $f_N(\xi, \tau)$ and $g_N(\xi_2, \tau_2)$, we have
\begin{align} \label{tau1curve11}
\big|\tau_1 - \al [c_1N ]^3\big| & \leq |\tau - N^3 | + \big|\tau_2 - \al [c_2 N ]^3 \big|
+ \big| \al [c_1 N ]^3 + \al [c_2 N ]^3 - N^3 \big| \notag \\
& \leq 3 + | 3 \al (c_1 - c_2) N^2\theta_N + 3\al N \theta_N^2 | 
\lesssim N^{1-\nu_{c_1}+\eps}
\end{align}

\noindent
for sufficiently large $N$ satisfying \eqref{theta_N111}.
i.e. $\jb{\tau_1 - \al [c_1N]^3} \lesssim \max( N^{1-\nu_{c_1}+\eps}, 1) = N^{1-\nu_{c_1}+\eps}$. 
Now, let $ \wt{A}_N = \{ (\xi_1, \tau_1) \in \mathbb{Z} \times \mathbb{R} : \xi_1 = [c_1N ], 
 \big| \tau_1 - N^3 + \al [c_2 N ]^3 \big| \leq 1 \}$.
Note that  $g_N(\xi_2, \tau_2) \equiv 1 $ on $\supp ( f_N) \cap \wt{A}_N$.
Then, we have
$\| \wt{\mathcal{B}}_{s, b} (f_N, g_N) \|_{L^2_{\xi_1, \tau_1}(\mathbb{Z} \times \mathbb{R})} 
 \geq \| \wt{\mathcal{B}}_{s, b} (f_N, g_N) \|_{L^2_{\xi_1, \tau_1}(\wt{A}_N)} 
 \gtrsim N^{1 - s - (1-\nu_{c_1}+\eps)b}.$
Thus, if \eqref{dualbilinear12} holds, 
we must have $(1-\nu_{c_1}+\eps)b \geq 1-s$.

Putting two results together, we have $ 1-s \leq (1-\nu_{c_1}+\eps)b \leq s - \nu_{c_1} + \eps$.
i.e. we must have $ s \geq \frac{1}{2} + \frac{1}{2} (\nu_{c_1}- \eps) \text{ for any } \eps >0 $.
Therefore, we must have $ s \geq \min( 1, \frac{1}{2} + \frac{1}{2} \nu_{c_1}) $ if  \eqref{scaledbilinear1} holds.
\end{proof}

Now, we present the proof of Proposition \ref{PROP:bilinear1}.

\begin{proof}
[Proof of Proposition \ref{PROP:bilinear1}]

Fix $\ld \geq 1$.  We  prove  the bilinear estimate \eqref{Zbilinear1} in two steps.

\noindent
$\bullet$ {\bf Part 1:} First, we  prove \eqref{scaledbilinear1}. 
Let  $\mathcal{B}_{s} (\cdot, \cdot) = \mathcal{B}_{s, \frac{1}{2}} (\cdot, \cdot)$
where  $\mathcal{B}_{s, b} (\cdot, \cdot)$ is defined in \eqref{Zdual1}.
Then,  \eqref{scaledbilinear1} holds if and only if
\begin{equation} \label{c_1dual11} 
\left\| \mathcal{B}_{s}(f, g) \right\|_{L^2 (d \xi^\ld d\tau) }
\lesssim C_1(\ld) \| f \|_{L^2 (d \xi^\ld d\tau) } \| g \|_{L^2 (d \xi^\ld d\tau) } . 
\end{equation}

Now, let $\G_\xi(\xi_1) = - \xi^3 +\al \xi_1^3+\al \xi_2^3 = 3 \al \xi \xi_1^2 - 3 \al \xi^2 \xi_1 - (1 - \al) \xi^3$.
For fixed $\xi$, $\G_\xi(\xi_1)$ is a quadratic function in $\xi_1$ such that $\G_\xi(c_1\xi) = \G_\xi(c_2\xi) = 0$.
For now, assume $\xi > 0$. Then,  $c_1 \xi > 0$ is a positive root of a convex parabola $\G_\xi(\xi_1)$,
and we have
\begin{align*} \textstyle
\G_\xi(c_1 \xi - \tfrac{1}{\ld}) = -3\al \xi \frac{1}{\ld}\Big(\frac{\sqrt{-3 + 12 \al^{-1}}}{3}\xi - \frac{1}{\ld} \Big)
\sim - \frac{1}{\ld} \xi^2, 
\end{align*}
for $\xi \geq L_\al : = \frac{6}{\sqrt{-3 + 12\al^{-1}}} \geq \frac{6}{\ld \sqrt{-3 + 12\al^{-1}}}$.  
Hence, for $| \xi_1 - c_1 \xi | \geq  1/\ld$ and $|\xi| \geq L_\al$, we have
$ |\G_\xi (\xi_1)| \gtrsim \xi^2/\ld.$
The same computation with $c_2$ shows that we have, for $|\xi| \geq L_\al$,  
\begin{equation} \label{away11}
\MAX := \max \big(  \jb{\tau - \xi^3},    \jb{\tau_1 - \al \xi_1^3},   \jb{\tau_2 - \al \xi_2^3} \big) 
\gtrsim |\G_\xi (\xi_1)| \gtrsim \frac{\xi^2}{\ld} \text{ on } A ,
\end{equation}
where 
$ A = \big\{ (\xi, \xi_1, \xi_2) \in (\mathbb{Z}/\ld)^3: 
\xi = \xi_1 + \xi_2, \  | \xi_1 - c_1 \xi | \geq  1/\ld \text{ and }| \xi_1 - c_2 \xi | \geq  1/\ld \big\}. $

Now, we'd like to obtain a lower bound for $\G_\xi(\xi_1)$ near the resonances, i.e. on $A^c$. 
Recall that \eqref{resres111} holds for $\xi, \xi_1, \xi_2 \in \mathbb{Z}$.
Since $\xi, \xi_1, \xi_2 \in \mathbb{Z}/\ld$,
we need to compute its dependece on $\ld$ explicitly.
Assume $| \xi_1 - c_1 \xi | < 1/\ld$.
Let $\dl = c_1 \xi - \xi_1$. Then, $|\dl| < 1/\ld$.
Moreover, from the definition of the minimal type index $\nu_{c_1}$, we have, for any $\eps > 0$, 
\[ |\dl| = |\xi_1 - c_1 \xi| = |\xi| \bigg| c_1 - \frac{\xi_1}{\xi}\bigg| = |\xi| \bigg| c_1 - \frac{\ld \xi_1}{\ld \xi}\bigg|
\geq |\xi| \frac{K_\eps}{(\ld |\xi|)^{2+\nu_{c_1}+\eps}} = \frac{K_\eps |\xi|^{-1-\nu_{c_1}-\eps}}{\ld^{2+\nu_{c_1}+\eps}},\]
since $\ld\xi, \, \ld\xi_1 \in \mathbb{Z}$.
Therefore, for $|\xi| \geq L_\al$, we have
\begin{align} \label{near11}
\MAX 
& \gtrsim |\G_\xi(\xi_1)|  
  \geq  3\al  L_\al \xi^2 |\dl|  
\sim \frac{|\xi|^{1-\nu_{c_1}-\eps}}{\ld^{2+\nu_{c_1}+\eps}}.
\end{align}

Now, we  prove \eqref{c_1dual11}.
Without loss of generality, assume $f$ and $g$ are nonnegative.
By symmetry, we also assume $ \jb{\tau_1-\al \xi_1^3} \geq \jb{\tau_2-\al \xi_2^3}$.

\noindent
$\bullet$ {\bf Case (1):}  $|\xi| \leq L_\al$ 

For $s \geq 0$, we have
$\frac{|\xi| \jb{\xi}^s }{\jb{\xi_1}^s\jb{\xi_2}^s} \lesssim \frac{ \jb{\xi}^s }{\jb{\xi_1}^s\jb{\xi_2}^s} \lesssim 1$.
Also, $\jb{\tau - \xi^3} \geq 1$.
Now, let $\ft{F}(\xi_1, \tau_1) = \jb{\tau_1 - \al \xi_1^3}^{-\frac{1}{2}} f(\xi_1, \tau_1)$
and $\ft{G}(\xi_2, \tau_2) = \jb{\tau_2 - \al \xi_2^3}^{-\frac{1}{2}} g(\xi_2, \tau_2)$.
Then, by H\"older inequality and Lemma \ref{L^4strichartz}, we have
\begin{align*} 
\| \mathcal{B}_{s}  (f, g) \|_{L^2_{\xi, \tau} }  & = \| FG\|_{L^2_{x, t}} 
 \lesssim \|F\|_{X_\al^{0, \frac{1}{3}}} \|G\|_{X_\al^{0, \frac{1}{3}}} 
\leq  \| f \|_{L^2_{\xi, \tau} } \| g \|_{L^2_{\xi, \tau} } . 
\end{align*}

\noindent
$\bullet$
{\bf Case (2):}  $| \xi_1 - c_1 \xi | \geq  1/\ld$ and $| \xi_1 - c_2 \xi | \geq  1/\ld$ with $ |\xi| > L_\al$ 

In this case, we have \eqref{away11}:
$ \MAX =\max \big(  \jb{\tau - \xi^3},    \jb{\tau_1 - \al \xi_1^3},   \jb{\tau_2 - \al \xi_2^3} \big) 
 \gtrsim \xi^2/\ld.$

\noindent
$\circ$ Subcase (2.a): $\jb{\tau - \xi^3} \gtrsim \xi^2/\ld$ 

In this case, we have $\frac{|\xi|\jb{\xi}^s} {\jb{\xi_1}^s\jb{\xi_2}^s}\frac{1}{\jb{\tau - \xi^3}^\frac{1}{2}} 
\lesssim \frac{|\xi|}{\jb{\tau - \xi^3}^\frac{1}{2}}\lesssim \ld^\frac{1}{2}$
for $s \geq 0$.
Then, by following the computation in Case (1), it follows that \eqref{c_1dual11} holds  for $s \geq 0$,  giving $C_1(\ld) = \ld^\frac{1}{2}$.

\noindent
$\circ$ Subcase (2.b): $\jb{\tau_1 - \al \xi_1^3} \gtrsim \xi^2/\ld$  

Note that by duality, \eqref{c_1dual11} is equivalent to 
\begin{align} \label{duality11} 
\bigg| \iint \mathcal{B}_{s}  (f, g) (\xi, \tau) h(\xi, \tau)  d \xi^\ld d\tau\bigg|
 \lesssim  C_1(\ld) \| f \|_{L^2 (d \xi^\ld d\tau) } \| g \|_{L^2 (d \xi^\ld d\tau) } \| h \|_{L^2 (d \xi^\ld d\tau) }. 
\end{align}

\noindent
In this case, we have $\frac{|\xi|\jb{\xi}^s} {\jb{\xi_1}^s\jb{\xi_2}^s}\frac{1}{\jb{\tau_1 - \al \xi_1^3}^\frac{1}{2}} 
\lesssim \frac{|\xi|}{\jb{\tau_1 - \al \xi_1^3}^\frac{1}{2}}\lesssim \ld^\frac{1}{2}$
for $s \geq 0$.
Let $\ft{G}(\xi_2, \tau_2) = \jb{\tau_2 - \al \xi_2^3}^{-\frac{1}{2}} g(\xi_2, \tau_2)$
and $\ft{H}(\xi, \tau) = \jb{\tau -  \xi^3}^{-\frac{1}{2}} h(-\xi, -\tau)$.
Then, by H\"older and Lemma \ref{L^4strichartz}, we have, for $s \geq 0$, 
\begin{align*}
 \text{LHS of }  \eqref{duality11} &\lesssim \ld^\frac{1}{2} \bigg| \iint \invft{f}(x, t) G(x, t) H(x, t) dx dt \bigg|  
\leq \ld^\frac{1}{2} \| f\|_{L^2_{\xi, \tau} } \|G\|_{L^4_{x, t}} \|H\|_{L^4_{x, t}} \\
& \lesssim \ld^\frac{1}{2} \| f\|_{L^2_{\xi, \tau} } \|G\|_{X_\al^{0\frac{1}{3}}} \|H\|_{X^{0\frac{1}{3}}}
\lesssim \ld^\frac{1}{2} \| f \|_{L^2_{\xi, \tau} } \| g \|_{L^2_{\xi, \tau} } \| h \|_{L^2_{\xi, \tau}  }. 
\end{align*}

\noindent
$\bullet$
{\bf Case (3):}  $| \xi_1 - c_1 \xi | <  1/\ld$ or $| \xi_1 - c_2 \xi | <  1/\ld$ with $ |\xi| > L_\al$ 

Without loss of generality, assume $|\xi_1 - c_1 \xi|< 1/\ld$.
Then, $|\xi_2 - c_2\xi| = |\xi_1 - c_1 \xi|< 1/\ld$.
Thus, we have
$ \jb{\xi_1} \sim \jb{\xi_2} \sim \jb{\xi} $.  Hence, for $s \geq 1$, we have
\[ \frac{|\xi| \jb{\xi}^s}{\jb{\xi_1}^s\jb{\xi_2}^s} = \frac{|\xi| \jb{\xi}}{\jb{\xi_1}\jb{\xi_2}} \frac{ \jb{\xi}^{1-s}}{\jb{\xi_1}^{1-s}\jb{\xi_2}^{1-s}} \lesssim 1.\]  
Then, the rest follows from Case (1), giving $C_1(\ld) = 1$

This proves  \eqref{scaledbilinear1} for $s \geq 1$ with $C_1(\ld) = \ld^\frac{1}{2}$, 
which corresponds to the case when $\nu_{c_1} \geq 1$.

\begin{remark} \rm
By repeating the same computation in Case (2) with
$| \xi_1 - c_1 \xi | \geq 1$ and $| \xi_1 - c_2 \xi | \geq  1$ for $ |\xi| > L_\al$
and Case (3) with
$| \xi_1 - c_1 \xi | <  1$ or $| \xi_1 - c_2 \xi | <  1$ for $ |\xi| > L_\al$, 
we can indeed obtain \eqref{scaledbilinear1} for $s \geq 1$ with $C_1(\ld) = 1$, which is sharp if $\nu_{c_1} \geq 1$.

\end{remark}

Now, we  consider the case when $\nu_{c_1} < 1$.
Fix $ s \in (\frac{1}{2} + \frac{1}{2}\nu_{c_1}, 1)$.
Then, there exists small $\eps > 0$ such that $ s \geq \frac{1}{2} + \frac{1}{2}(\nu_{c_1}+\eps) $.
In this case, we have \eqref{near11}:
$ \MAX  \gtrsim \frac{|\xi|^{1-\nu_{c_1}-\eps}}{\ld^{2+\nu_{c_1}+\eps}}.$
Since $ \jb{\xi_1} \sim \jb{\xi_2} \sim \jb{\xi} $ in this case, we have
\begin{align} \label{near111}
 \frac{|\xi| \jb{\xi}^s}{\jb{\xi_1}^s\jb{\xi_2}^s} \frac{1}{\MAX^\frac{1}{2}} 
& \lesssim \ld^{1+\frac{1}{2}(\nu_{c_1}+\eps) } \, \text{ for } \, s \geq \tfrac{1}{2} + \tfrac{1}{2}(\nu_{c_1}+\eps) .
\end{align}

Note that we do {\it not} have an infinite sum in $\xi_1$ in this case, 
since $| \xi_1 - c_1 \xi | <  1/\ld$ or $| \xi_1 - c_2 \xi | <  1/\ld$, i.e.
for each fixed $\xi$, there are at most 4 values of $\xi_1$ in the sum.  
For the rest of the argument, we assume that $\xi_1 = c_1 \xi + \dl_\xi$ with $0 \leq \dl_\xi < 1/\ld$.
(The cases for $0 \geq \dl_\xi > - 1/\ld$ and $| \xi_1 - c_2 \xi | <  1/\ld$ are exactly the same.)
Then, we have
\begin{align} \label{nosum11}  
\mathcal{B}_{s} & (f, g) (\xi, \tau) \notag \\
&=  \frac{\xi \jb{\xi}^s }{\jb{\tau - \xi^3}^\frac{1}{2}} \frac{1}{2\pi \ld}
 \intt_{   \tau_1 + \tau_2 = \tau } 
\frac{f(c_1\xi + \dl_\xi , \tau_1) g(c_2\xi - \dl_\xi, \tau_2)}{\jb{\xi_1}^s 
\jb{\xi_2}^s \jb{\tau_1 - \al (c_1\xi + \dl_\xi)^3}^\frac{1}{2}  \jb{\tau_2 - \al (c_2\xi - \dl_\xi)^3}^\frac{1}{2}}  d\tau_1.
\end{align}

\noindent
$\circ$ Subcase (3.a): $\MAX = \jb{\tau - \xi^3} $  

Let $\ft{F}(\xi_1, \tau_1) = \jb{\tau_1 - \al \xi_1^3}^{-\frac{1}{2}} f(\xi_1, \tau_1)$
and $\ft{G}(\xi_2, \tau_2) = \jb{\tau_2 - \al \xi_2^3}^{-\frac{1}{2}} g(\xi_2, \tau_2)$.
Then, from \eqref{near111} and \eqref{nosum11}, we have
\begin{align*}
\| \mathcal{B}_{s}  (f, g) \|_{L^2(d \xi^\ld d\tau)  }  
&\lesssim 
\ld^{\frac{1}{2}(\nu_{c_1}+\eps) }   
\Big\| \big\| \ft{F}(c_1\xi + \dl_\xi, \cdot) *_\tau \ft{G}(c_2\xi - \dl_\xi, \cdot) \big\|_{L^2_\tau} \Big\|_{L^2 (d \xi^\ld ) } \\
& = \ld^{\frac{1}{2}(\nu_{c_1}+\eps) }   
\Big\| \big\| \ft{F}^{{}^x}(c_1\xi + \dl_\xi, t)  \ft{G}^{{}^x}(c_2\xi - \dl_\xi, t) \big\|_{L^2_t} \Big\|_{L^2 (d \xi^\ld ) }.
\end{align*}

\noindent By changing the order of the integrations, 
\begin{align*}
 &\leq \ld^{\frac{1}{2}(\nu_{c_1}+\eps) }   
\Big\| \big\| \ft{F}^{{}^x}(\cdot, t) \big\|_{L^\infty (d \xi^\ld ) } \big\| \ft{G}^{{}^x}(\cdot, t) \big\|_{L^2 (d \xi^\ld ) } \Big\|_{L^2_t } \\
& \leq \ld^{\frac{1}{2}(\nu_{c_1}+\eps) }   
\big\| \| F(\cdot, t) \|_{L^1_x  } \| G(\cdot, t) \|_{L^2_x } \big\|_{L^2_t }
\leq \ld^{\frac{1}{2}+ \frac{1}{2}(\nu_{c_1}+\eps) }   
\big\| \| F(\cdot, t) \|_{L^2_x  } \| G(\cdot, t) \|_{L^2_x } \big\|_{L^2_t }, 
\end{align*}
where the last inequality follows from H\"older inequality on $F$ in $x$.
By $L^4_t, L^4_t$ H\"older inequality and Lemma \ref{embed1}, we have
\[ \| \mathcal{B}_{s}  (f, g) \|_{L^2_{\xi, \tau} }  
 \lesssim \ld^{\frac{1}{2}+ \frac{1}{2}(\nu_{c_1}+\eps) }   
\|  F(\cdot, t) \|_{X_\al^{0, \frac{1}4} } \| G(\cdot, t) \|_{X_\al^{0, \frac{1}4} } 
 \leq \ld^{\frac{1}{2}+ \frac{1}{2}(\nu_{c_1}+\eps) }   
\| f \|_{L^2_{\xi, \tau}  } \| g \|_{L^2_{\xi, \tau}}. \]

\noindent
$\circ$ Subcase (3.b): $\MAX = \jb{\tau_1 - \al \xi_1^3} $

By duality, it is enough to show \eqref{duality11}
as in Subcase (2.b).
Using \eqref{near111} and \eqref{nosum11} followed by
$L^4_t, L^4_t$ H\"older inequality and Lemma \ref{embed1}, 
we can show \eqref{duality11} with $C_1(\ld) = \ld^{\frac{1}{2}+\frac{1}{2}(\nu_{c_1}+\eps) } $
in a similar manner as in Subcase (3.a),
and thus we omit the details.
This completes the proof of  \eqref{scaledbilinear1}.

\noindent
$\bullet$ {\bf Part 2:} Now, consider the $L^2_\xi(\mathbb{Z}/\ld)L^1_\tau(\mathbb{R})$ part of the $Z^s$ norm.
The basic idea is to reduce the proof to Part 1 either by Cauchy-Schwarz or Lemma \ref{closetocubic11} below (c.f. \cite{CKSTT4}.)
Note that it suffices to prove
\begin{align} \label{Z^s11}
 \| \jb{\tau - \xi^3}^{-\frac{1}{2}} B_{s} (f, g) (\xi, \tau) \|_{L^2(d \xi^\ld) L^1_\tau} 
\lesssim  \ C_1(\ld) \| f \|_{L^2(d \xi^\ld d\tau )} \| g \|_{L^2(d \xi^\ld d\tau)}
\end{align}

\noindent
where $B_{s}(\cdot, \cdot)$ is as in Part 1.
By Cauchy-Schwarz inequality in $\tau$, 
\[ \text{LHS of } \eqref{Z^s11} \leq  \Big\| \| \jb{\tau - \xi}^{-\frac{1}{2}-} \|_{L^2_\tau}
\|\mathcal{Q}_s(\xi, \tau) \|_{L^2_\tau} \Big\|_{L^2(d \xi^\ld)}
\lesssim \|\mathcal{Q}_s(\xi, \tau) \|_{L^2(d \xi^\ld d\tau)},
\]
where
$\mathcal{Q}_s(\xi, \tau) = 
\jb{\tau - \xi^3}^{0+} B_{s} (f, g) (\xi, \tau)$.

Note that \eqref{Z^s11} basically follows from Part 1,
if $|\xi| \leq L_\al$, $s \geq 1$, or
$\MAX = \jb{\tau_1 - \al \xi_1^3}$ or $\jb{\tau_2 - \al \xi_2^3}$.
Hence, we  assume $\MAX = \jb{\tau - \xi^3}$
and $s < 1$, i.e. $\nu_{c_1} < 1$.

\noindent
$\bullet$
{\bf Case (4):}  $\jb{\tau_1 - \al \xi_1^3} \gtrsim \jb{\tau - \xi^3}^\frac{1}{100}$
or $\jb{\tau_2 - \al \xi_2^3} \gtrsim \jb{\tau - \xi^3}^\frac{1}{100}$ 

Without loss of generality, assume $\jb{\tau_1 - \al \xi_1^3} \gtrsim \jb{\tau -  \xi^3}^\frac{1}{100}$.
Then, we have 
\[\mathcal{Q}_s(\xi, \tau) \lesssim 
\frac{\xi \jb{\xi}^s }{\jb{\tau - \xi^3}^{\frac{1}{2}}}
 \iintt_{ \substack{  \xi_1+ \xi_2 = \xi \\  \tau_1 + \tau_2 = \tau }} 
\frac{f(\xi_1, \tau_1) g(\xi_2, \tau_2)}{\jb{\xi_1}^s 
\jb{\xi_2}^s \jb{\tau_1 - \al \xi_1^3}^\frac{1}{3}  \jb{\tau_2 - \al \xi_2^3}^\frac{1}{2}} d \xi_1^\ld d\tau_1.
\]

\noindent
Then, again,  \eqref{Z^s11} basically follows from Subcases (2.a) or (3.a).

\noindent
$\bullet$
{\bf Case (5):}  $\jb{\tau_1 - \al \xi_1^3}, \jb{\tau_2 - \al \xi_2^3} \ll \jb{\tau - \xi^3}^\frac{1}{100}$ 

Recall that $\G_\xi(\xi_1) =  (\tau - \xi^3) - (\tau_1- \al \xi_1^3) -(\tau_2 - \al \xi_2^3)$.
Thus, in this case, we have 
\[ \tau - \xi^3 = \G_\xi(\xi_1) +  o\big(\jb{\tau- \xi^3}^\frac{1}{100}\big) = \G_\xi(\xi_1) +  o\big(|\G_\xi(\xi_1)|^\frac{1}{100}\big). \]

\noindent
Let
$ \Omega(\xi) = \big\{ \eta \in \mathbb{R} : \eta =  \G_\xi(\xi_1) +  o\big(|\G_\xi(\xi_1)|^\frac{1}{100}\big)$
 for some $ \xi_1 \in \mathbb{Z}/\ld \big\}$. 
Then, we have the following lemma (c.f. \cite[Lemma 7.4]{CKSTT4}) whose proof is postponed.

\begin{lemma} \label{closetocubic11}
Let $|\xi| > L_\al \gtrsim 1$.  Then, for all dyadic $M \geq 1$, we have
\begin{equation} \label{omega11}
\big| \Omega(\xi) \cap \{ |\eta|\sim M \} \big| \lesssim \ld M^\frac{2}{3}.
\end{equation}

\end{lemma}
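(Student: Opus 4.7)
The plan is to decompose $\Omega(\xi)\cap\{|\eta|\sim M\}$ as a union of very short intervals and estimate its Lebesgue measure by two competing bounds, then take the minimum. By definition, every $\eta$ in this set comes from some $\xi_1\in\Z/\ld$ with $|\G_\xi(\xi_1)|\sim M$ and lies in an interval of radius $o(|\G_\xi(\xi_1)|^{1/100})=o(M^{1/100})$ around $\G_\xi(\xi_1)$. Hence
\[
\bigl|\Omega(\xi)\cap\{|\eta|\sim M\}\bigr|\lesssim \min\bigl(M,\ N(\xi,M)\cdot M^{1/100}\bigr),
\]
where $N(\xi,M)=\#\{\xi_1\in\Z/\ld:|\G_\xi(\xi_1)|\sim M\}$: the $M$ bound comes from the ambient set $\{|\eta|\sim M\}$, and the other from counting intervals. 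The goal reduces to showing that the right-hand side is $\lesssim \ld M^{2/3}$.

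The next step is to count $N(\xi,M)$ using the vertex form
\[
\G_\xi(\xi_1)=3\al\xi\bigl(\xi_1-\tfrac{\xi}{2}\bigr)^2-(1-\tfrac{\al}{4})\xi^3,
\]
with critical value $v:=-(1-\al/4)\xi^3$ at $\xi_1=\xi/2$ (WLOG $\xi>0$; the other case is symmetric). For $|\xi|>L_\al$, the derivative $|\G_\xi'(\xi_1)|=6\al|\xi|\,|\xi_1-\xi/2|$ is of size $\sim|\xi|^2$ at the two roots $c_1\xi,c_2\xi$ by \eqref{c_1}. Thus the real preimage $\{\xi_1\in\R:|\G_\xi(\xi_1)|\sim M\}$ is a disjoint union of two intervals of length $\sim M/|\xi|^2$ around $c_1\xi,c_2\xi$ when $M\leq|v|\sim|\xi|^3$, and a single interval of length $\sim\sqrt{M/|\xi|}$ around $\xi/2$ when $M>|v|$. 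Multiplying by the density $\ld$ of $\Z/\ld$ and adding $1$ for roundoff,
\[
N(\xi,M)\lesssim \begin{cases}\ld M/|\xi|^2+1,& M\leq|v|,\\ \ld\sqrt{M/|\xi|}+1,& M>|v|.\end{cases}
\]

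Finally, plug these into the two-bound inequality above and compare with $\ld M^{2/3}$. In the regime $M\leq|v|$, one has $|\xi|^2\geq c\,M^{2/3}$ so that $\ld M/|\xi|^2\cdot M^{1/100}\lesssim \ld M^{1/3+1/100}\leq \ld M^{2/3}$, while the additive $+1$ contributes only $M^{1/100}\leq \ld M^{2/3}$. In the regime $M>|v|$, the condition $|\xi|>L_\al\gtrsim 1$ yields $\ld\sqrt{M/|\xi|}\cdot M^{1/100}\lesssim \ld M^{1/2+1/100}\leq \ld M^{2/3}$; if $M\leq\ld^3$ the trivial bound $M\leq\ld M^{2/3}$ already suffices. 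The main obstacle is purely bookkeeping: one must track the additive $+1$ in the lattice count, the $o(M^{1/100})$ slack in the definition of $\Omega(\xi)$, and the transitions at $M\sim|v|$ and $M\sim\ld^3$ uniformly. The geometric content --- that a real quadratic has an explicit sub-level structure and that $\Z/\ld$ has density $\ld$ --- is elementary, and the input $|\G_\xi'|\sim|\xi|^2$ at the roots is the only nontrivial ingredient, coming directly from the explicit formulas for $c_1,c_2$.
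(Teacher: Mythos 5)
Your argument is correct and follows essentially the same route as the paper: both proofs bound the set by (number of lattice points $\xi_1\in\Z/\ld$ with $|\G_\xi(\xi_1)|\sim M$) times the interval length $M^{1/100}$, and then count those $\xi_1$ using the quadratic (vertex-form) structure of $\G_\xi$, getting $\lesssim \ld M^{1/2}$ away from the roots and $O(\ld)$ near them. The only difference is organizational — you split by the regimes $M\lessgtr |v|$ while the paper splits by the position of $\xi_1$ relative to $\xi/2$ and $c_1\xi$ — and the bookkeeping you flag (the $+1$ in the lattice count, the behavior at the transition $M\sim|v|$) indeed goes through.
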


\noindent
 Then, using this lemma, we have
$ \int \jb{\tau - \xi^3}^{-1}  \chi_{\Omega(\xi)}(\tau - \xi^3) d\tau 
\lesssim \ld^{0+}$
as in \cite[p.737]{CKSTT4}.
Then, by Cauchy-Schwarz in $\tau$, we have
\begin{align*}
\text{LHS of } \eqref{Z^s11} 
& \leq  
 \big\| \| \jb{\tau - \xi^3}^{-\frac{1}{2}} \chi_{\Omega(\xi)}(\tau - \xi^3) \|_{L^2_\tau}
\| \mathcal{B}_s (f, g) \|_{L^2_\tau} \big\|_{L^2 (d\xi^\ld)} \\
& \lesssim \ld^{0+} \| \mathcal{B}_s (f, g)  \|_{L^2 (d\xi^\ld d \tau)}, 
\end{align*}
and thus the proof is reduced to Subcases (2.a) or (3.a),  
establishing \eqref{Z^s11} for $s \geq {\frac{1}{2}+\frac{1}{2}(\nu_{c_1}+\eps)  }$
with $C_1(\ld) = \ld^{\frac{1}{2}+\frac{1}{2}(\nu_{c_1}+\eps) + } $.
\end{proof}

\begin{proof} [Proof of Lemma \ref{closetocubic11}]
Without loss of generality, assume $\xi$ is positive.
 Since $\G_\xi(\xi_1) = -\xi^3 + \al \xi_1^3 + \al \xi_2^3$ is symmetric in $\xi_1$ and $\xi_2$, assume $|\xi_1| \geq |\xi_2|$.
Then, $\xi_1 \geq \xi/2$ since $\xi = \xi_1 + \xi_2$.
Once we fix $\xi_1$, we have
\[\big| \big\{ \eta \in \mathbb{R} : |\eta| \sim M, \  \eta =  \G_\xi(\xi_1) +  o\big(|\G_\xi(\xi_1)|^\frac{1}{100}\big) \big\} \big| \sim M^\frac{1}{100}.\] 

\noindent
Now, we need to estimate the number of possible values of $\xi_1 \in \mathbb{Z}/\ld$
such that 
\begin{equation} \label{simM11}
\big|\G_\xi(\xi_1) +  o\big(|\G_\xi(\xi_1)|^\frac{1}{100}\big) \big|\sim M. 
\end{equation}

\noindent
For the following argument, let  $|\eta| \sim M$ and $\xi \sim N$, dyadic.

\noindent
$\bullet$
{\bf Case (1):}  $\xi/2 \leq \xi_1 \leq c_1 \xi - 1$  

First note that
$\G_\xi(\xi) =  -(1 - \al) \xi^3 < 0$ and $\G_\xi(\frac{\xi}{2}) =  -\frac{3}{2}\al \xi^2 < 0$.
Since $\G_\xi(\xi_1)$ is increasing in this range of $\xi_1$, we have $|\G_\xi(\xi_1)| \sim N^p$ for some $p\in[2, 3]$
Thus, we have $M \sim N^p$, i.e. $N \sim M^\frac{1}{p}$.
Since $ \xi_1 \in [\xi/2, c_1 \xi - 1]$ and $\xi \sim N \sim M^\frac{1}{p}$, there are $\sim \ld M^\frac{1}{p}$ many possible values of $\xi_1$ in this case.
Hence, the contribution to \eqref{omega11} is at most $\sim  \ld M^\frac{1}{p} M^\frac{1}{100} \leq \ld M^\frac{2}{3}$.

 \noindent
$\bullet$
{\bf Case (2):}  $|\xi_1 - c_1\xi | \leq 1$ 

Then, there are $\sim \ld$ many possible values of $\xi_1$ in this case.
Hence, the contribution to \eqref{omega11} is at most $\sim  \ld M^\frac{1}{100}$.

\noindent
$\bullet$
{\bf Case (3):}  $\xi_1 \geq c_1 \xi+1$ 

In this case, $\G_\xi(\xi_1)$ is positive and thus we have $\G_\xi(\xi_1) \sim M$.
Note that 
$\G_\xi(\xi_1) 
= 3 \al \xi (\xi_1 - \tfrac{\xi}{2})^2 - (1 + \tfrac{3}{4}\al)\xi^3 $.
Now, let $\wt{\G}_\xi(\xi_1) = 3 \al \xi (\xi_1 - \frac{\xi}{2})^2.$
i.e. for $\xi$ fixed,  $\wt{\G}_\xi(\xi_1)$ is a upward parallel translate of the parabola $\G_\xi(\xi_1)$.
Then, we have
\begin{align*}
  \#\big\{ \xi_1 \in \mathbb{Z}/\ld : \xi_1 \geq c_1 &  \xi + 1 \text{ and } \G_\xi(\xi_1) \sim M \big\} \\
& \leq \#\big\{ \xi_1 \in \mathbb{Z}/\ld  : \xi_1 \geq c_1 \xi + 1 \text{ and } \wt{\G}_\xi(\xi_1) \sim M \big\},
\end{align*}

\noindent
since the graph of $\G_\xi(\xi_1)$ is steeper than that of $\wt{\G}_\xi(\xi_1)$ for any fixed range of height $\sim M \geq 1$.
Furthermore, we have
\[\#\big\{ \xi_1 \in \mathbb{Z}/\ld : \xi_1 \geq c_1 \xi + 1 \text{ and } \wt{\G}_\xi(\xi_1) \sim M \big\} 
\leq \#\big\{ \xi_1 \in \mathbb{Z}/\ld : 0 \leq \wt{\G}_\xi(\xi_1) \leq 2M \big\}\]
and the latter can be at most $\sim \ld (M/N)^\frac{1}{2} \lesssim \ld M^\frac{1}{2}$ since $N \sim |\xi| \gtrsim 1$.
Hence, the contribution to \eqref{omega11} is at most $\sim  \ld M^\frac{1}{2} M^\frac{1}{100} \leq \ld M^\frac{2}{3}$.
\end{proof}

The proofs of
Propositions \ref{PROP:bilinear2}
and  \ref{PROP:counterexample2}
for the second bilinear estimate \eqref{Zbilinear2}
are analogous to those of Propositions \ref{PROP:bilinear1}
and  \ref{PROP:counterexample1},
(using \eqref{resres22} and \eqref{resres222}
instead of \eqref{resres11} and \eqref{resres111}.)
In the following, we only show how 
\eqref{scaledbilinear2}
(and hence \eqref{Zbilinear2}) fails for any $s, b \in \mathbb{R}$
without the mean 0 assumption $u$,
and omit the rest.
See \cite{OHTHESIS} for other details.

\begin{proof}[Proof of Proposition \ref{PROP:counterexample2} \textup{(}a\textup{)}]
For simplicity, let $\ld = 1$. We  construct counterexamples to \eqref{scaledbilinear2}. 
Define the bilinear operator $\mathcal{B}_{s, b} $ by
\[ \mathcal{B}_{s, b} (f, g) (\xi, \tau) =  \frac{\xi \jb{\xi}^s }{\jb{\tau - \al \xi^3}^{1-b}}
\sum_{\xi_1+ \xi_2 = \xi} \intt_{\tau_1 + \tau_2 = \tau} \frac{f(\xi_1, \tau_1) g(\xi_2, \tau_2)}{\jb{\xi_1}^s 
\jb{\xi_2}^s \jb{\tau_1 - \xi_1^3}^b  \jb{\tau_2 - \al \xi_2^3}^b} d\tau_1.
\]

\noindent
Then, 
(\ref{scaledbilinear2}) holds
if and only if
\begin{equation} \label{dualbilinear22}
\left\| \mathcal{B}_{s,b}(f, g) \right\|_{L^2_{\xi, \tau}} \lesssim \| f \|_{L^2_{\xi_1, \tau_1}} \| g \|_{L^2_{\xi_2, \tau_2}} . 
\end{equation}

\noindent
Let $
f_N(\xi, \tau) = a_\xi \chi_{1} (\tau - \xi^3) $ and 
$g_N(\xi, \tau) = b_\xi \chi_{2} (\tau - \al \xi^3)$, 
where
$a_\xi = 1$ if and only if $ \xi = 0$
and $ \ b_\xi = 1$ if and only if $ \xi =  N$.
Now, let $ A_N = \{(\xi, \tau) \in \mathbb{Z} \times \mathbb{R} : \xi = N, | \tau - \al N^3 | \leq 1\} $.
 Note that
$|\tau_1 - 0^3 | \leq 1$ and $ |\tau - \al N^3 | \leq 1$ implies 
$|\tau_2 - \al  N^3 | \leq 2. $
Then, on $A_N$, we have
$\mathcal{B}_{s, b} (f_N, g_N) (N, \tau) 
 \sim   \frac{N^{1+s} }{N^s}  \int \chi_1(\tau_1 ) d\tau_1
\sim  N$,
and thus
$\| \mathcal{B}_{s, b} (f_N, g_N) \|_{L^2_{\xi, \tau}(\mathbb{Z} \times \mathbb{R})} 
 \gtrsim N$.
Hence, if \eqref{dualbilinear22} holds, then, 
we must have $N \lesssim 1$ for all $N \in \mathbb{N}$,
which is impossible.
Therefore,  \eqref{scaledbilinear2} (and hence \eqref{Zbilinear2}) can not hold for any  $s, b \in \mathbb{R}$. 
\end{proof}

We conclude this section by stating several important remarks.

\begin{remark} \label{JJREMARK1} \rm
 Unlike KdV (i.e. $\al = 1$), we do {\it not} need the mean 0 condition on the functions in Proposition \ref{PROP:bilinear1}.  
More importantly, while the resonance for $\al = 1$ makes the bilinear estimate \eqref{Zbilinear1}
fail without the mean 0 condition regardless of regularity,
 the resonance for $0 < \al < 1$ can be treated by {\it assuming a higher regularity.}
This fact can be explained as follows. 
For example, the resonance for $\al = 1$ on $\mathbb{T}$ occurs when $\xi_1 = \xi$ and $\xi_2 = 0$ for all $\xi \in \mathbb{Z}$.
In some sense, we can say that 
there are infinitely many resonances accumulated at $\xi_2 = 0$.
This makes it impossible to treat the resonance at $\xi_2 = 0$
and thus \eqref{Zbilinear1} for $\al = 1$ fails for all $s \in \mathbb{R}$ without the mean 0 condition.

Now, consider the case when $ 0 < \al < 1$. 
Recall that, in dealing with \eqref{Zbilinear1}, 
we have resonances at $\xi_1 = c_1 \xi$ and $\xi_2 = c_2 \xi$ for each $\xi \in \mathbb{Z}$.
Also, note that  $ \lim_{\al \to 1} c_1 = 1$ and $\lim_{\al \to 1} c_2 = 0$.
Then, as soon as $\al \ne 1$, these infinitely many resonances accumulated at $\xi_2 = 0$ for $\al = 1$
are suddenly distributed over infinitely many distinct $\xi_2 = c_2 \xi$ for $ \xi \in \mathbb{Z}$,
which lets us treat each resonance by assuming a higher regularity.
This is what makes the $\al = 1$ case very different from $\al \in (0, 1)$.
\end{remark}

\begin{remark} \label{JJREMARK3}  \rm
When $0 \leq \nu_{c_1} < 1$, then the bilinear estimate \eqref{Zbilinear1} holds for $s> \frac{1}{2} + \frac{1}{2} \nu_{c_1} $ 
and fails for $s < \frac{1}{2} + \frac{1}{2} \nu_{c_1} $.  
i.e. we are missing the endpoint $s = \frac{1}{2} + \frac{1}{2} \nu_{c_1} $.
Actually, the proof of Proposition \ref{PROP:bilinear1} can be applied to prove 
 \eqref{Zbilinear1}  for $s = \frac{1}{2} + \frac{1}{2} \nu_{c_1} $ 
if $c_1$ is {\it  of its own minimal type}. i.e. if there exists $K = K(c_1) > 0$ such that 
for all pairs of integers $(m, n)$, we have
\begin{equation} \label{endpointc_1} 
\left| c_1 - \frac{m}{n} \right| \geq \frac{K}{ |n|^{2+\nu_{c_1}}} .
\end{equation}

\noindent
Moreover, if \eqref{endpointc_1} does not hold, then  \eqref{Zbilinear1} fails for $s = \frac{1}{2} + \frac{1}{2} \nu_{c_1} $.
Showing \eqref{endpointc_1} for given $c_1 \in \R$ is a  problem of number-theoretic nature, 
and we do not pursue this issue here.
In Bambusi-Paleari \cite{BP} and Berti-Bolle \cite{BB}, it is observed that the set
$A = \{ c_1 \in \R : \nu_{c_1} = 0$ and \eqref{endpointc_1} holds$\}$
is uncountable and of measure 0.
This implies that when $\nu_{c_1} = 0$, the bilinear estimate \eqref{Zbilinear1} holds for the endpoint $s = \frac{1}{2}$ 
only for $c_1$ in  this Cantor-like set $A$ of measure 0.
The same remark applies to the bilinear estimate \eqref{Zbilinear2} in Proposition \ref{PROP:bilinear2}.
\end{remark}

\begin{remark} \label{JJREMARK4}  \rm
In Proposition \ref{PROP:bilinear2}, 
we assume the mean 0 condition on $u$.  
This is due to the infinite accumulation of resonance at $\xi_1 = 0$.
(c.f. Remark \ref{JJREMARK1}.)
If the mean of $u_0$ is not 0, then we can consider $(U, v)$ instead of $(u, v)$, 
where $U(x, t)  = u(x, t) - (2\pi \ld)^{-1}\ft{u_0}(0)$.
Then, one needs to consider 
\begin{equation} \label{JJMB}
\begin{cases}
U_t + U_{xxx} + v v_x = 0 \\
v_t + \al v_{xxx} + (2\pi \ld)^{-1}\ft{u_0}(0) v_x + (Uv)_x = 0.
\end{cases}
\end{equation}

\noindent
Note that this just adds a harmless first order linear term $v_x$ in the second equation.
  See \cite{BO1}.
Also, note that the linear parts of \eqref{JJMB} are not mixed unlike \eqref{mean0MB} in the appendix.

\end{remark}

\begin{remark} \label{JJREMARK6} \rm 
$c_1, d_1,$ and $  d_2$ in defined \eqref{c_1} and \eqref{d_1 and d_2}
are real numbers if and only if $\al \in (0, 1) \cup (1, 4]$.
In particular, the same argument can be applied to establish the local well-posedness for $\al \in (0, 1) \cup (1, 4]$
whose regularity depends on $\nu_{c_1}, \nu_{d_1}$, and $\nu_{d_2}$.
When $\al = 4$, \eqref{MB} is LWP on $\mathbb{T}_\ld$ only for $s\geq 1$ since $c_1 = \frac{1}{2} \in \mathbb{Q}$.
On $\mathbb{R}$, the proof of LWP seems to break down when $\al = 4$. See Remark \ref{KPVremark1}.

When $\al <0$ or $\al > 4$, the resonance equations \eqref{JJreseq1} and \eqref{resonance2}
do not have a real solution for any $\xi \in \mathbb{R}$.
Let's consider \eqref{Zbilinear1} in Proposition \ref{PROP:bilinear1}.
From the resonance equation \eqref{JJreseq1}, we have
\begin{align*} 
|\xi^3 - \al \xi_1^3 -  \al \xi_2^3|
= |\xi| \big|3\al \big(\xi_1 -\tfrac{\xi}{2}\big)^2 + (-1+ \tfrac{\al}{4})\xi^2\big|
\gtrsim \max(|\xi_1 \xi_2\xi|, |\xi|^3).
\end{align*}

\noindent
Thus, we gain $\tfrac{3}{2}$ derivatives 
and the bilinear estimate \eqref{Zbilinear1} holds for $s \geq -\frac{1}{2}$
(with the mean 0 condition) as in the KdV and $\al = 1$ case.
The same remark applies to the second bilinear estimate \eqref{scaledbilinear2}
and the bilinear estimates \eqref{Rbilinear1} and \eqref{Rbilinear2} in the non-periodic setting.
For example, see \cite{AC} for $\al = -1$ on $\mathbb{R}$.
\end{remark}

\section{Well-Posedness on $\R$, $0<\al<1$}

In this section, we  establish the sharp  well-posedness in $L^2(\R) \times L^2(\R)$ for $0 < \al < 1$
by proving the bilinear estimates \eqref{Rbilinear1} and \eqref{Rbilinear2}.
Let $S(t) = e^{-t \dx^3} $ and $S_\al(t) = e^{-\al t \dx^3} $.
By standard computation \cite{KPV6}, we have

\begin{lemma} Let  $\phi$ be a function on $\mathbb{R}$.  Then, we have 
\begin{equation} \label{freepartR}
\| \eta(t)S(t) \phi \|_{X^{s, b}} \lesssim \|\phi\|_{H^s} \text{ and } \| \eta(t)S_\al(t) \phi \|_{X^{s, b}} \lesssim \|\phi\|_{H^s}.
\end{equation}

\end{lemma}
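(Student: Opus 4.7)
The plan is to reduce both estimates to an explicit computation on the space-time Fourier side, since the multipliers defining $X^{s,b}$ and $X_\al^{s,b}$ are adapted precisely to the symbols of $S(t)$ and $S_\al(t)$.

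First I would compute the space-time Fourier transform of $\eta(t)S(t)\phi$. Because $S(t)=e^{-t\dx^3}$ acts on Fourier space as multiplication by $e^{-it\xi^3}$, one has $\widehat{S(t)\phi}^{{}^x}(\xi,t) = e^{-it\xi^3}\ft{\phi}(\xi)$, so taking the temporal Fourier transform gives
\begin{equation*}
\ft{\eta(t)S(t)\phi}(\xi,\tau) = \ft{\eta}(\tau-\xi^3)\,\ft{\phi}(\xi).
\end{equation*}
Plugging this into the $X^{s,b}(\R\times\R)$ norm from \eqref{Xsb} and using Fubini yields
\begin{equation*}
\|\eta(t)S(t)\phi\|_{X^{s,b}}^2
= \intt_\R \jb{\xi}^{2s}|\ft{\phi}(\xi)|^2\intt_\R \jb{\tau-\xi^3}^{2b}|\ft{\eta}(\tau-\xi^3)|^2\,d\tau\,d\xi.
\end{equation*}

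Second, I would perform the change of variable $\tau\mapsto\tau-\xi^3$ in the inner integral, which is a pure translation and hence leaves the measure invariant. The inner integral becomes $\|\eta\|_{H^b(\R)}^2$ and is independent of $\xi$, so the expression factors into
\begin{equation*}
\|\eta(t)S(t)\phi\|_{X^{s,b}}^2 = \|\eta\|_{H^b(\R)}^2\,\|\phi\|_{H^s(\R)}^2.
\end{equation*}
Since $\eta\in C^\infty_c(\R)\subset\mathcal{S}(\R)$, we have $\|\eta\|_{H^b}<\infty$ for every $b\in\R$, and this constant depends only on $b$. This establishes the first estimate.

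The estimate for $S_\al(t)=e^{-\al t\dx^3}$ is identical after replacing the phase $\xi^3$ by $\al\xi^3$: the Fourier transform becomes $\ft{\eta}(\tau-\al\xi^3)\ft{\phi}(\xi)$, and the change of variable $\tau\mapsto\tau-\al\xi^3$ (again translation invariant in $\tau$, and trivially valid for every fixed $\al\in\R\setminus\{0\}$) reduces the weight $\jb{\tau-\al\xi^3}^{2b}$ to $\jb{\tau}^{2b}$, giving
\begin{equation*}
\|\eta(t)S_\al(t)\phi\|_{X_\al^{s,b}}^2 = \|\eta\|_{H^b(\R)}^2\,\|\phi\|_{H^s(\R)}^2.
\end{equation*}
There is no real obstacle in this lemma; the only thing to notice is that the weight appearing in the $X_\al^{s,b}$ norm matches the phase of $S_\al(t)$, so that after a harmless translation in $\tau$ the spatial and temporal integrations fully decouple. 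This is exactly the standard linear estimate from \cite{KPV6} applied separately to the two dispersive symbols $\xi^3$ and $\al\xi^3$.
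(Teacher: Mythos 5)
Your proof is correct and is exactly the standard computation the paper invokes by citing \cite{KPV6}: the space-time Fourier transform of $\eta(t)S(t)\phi$ factors as $\ft{\eta}(\tau-\xi^3)\ft{\phi}(\xi)$, and a translation in $\tau$ decouples the integrals, with the identical argument for the phase $\al\xi^3$. (You also correctly read the second estimate as being in the $X_\al^{s,b}$ norm, matching the periodic analogue earlier in the paper.)
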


\begin{lemma}
Let $F$, $G$ be functions on $\R \times \R$. Then we have 
\begin{equation*} 
\|\eta  (S*_R F)\|_{X^{s, b}}  \lesssim \|F\|_{X^{s, b-1}}
\text{ and }\|\eta (S_\al*_R G)\|_{X_\al^{s, b}}  \lesssim \|G\|_{X_\al^{s, b-1}}
\end{equation*}

\noindent
where $*_R$ denotes the retarded convolution, i.e.
$S*_R F(t) =\int_0^t  S(t- t') F(t') dt'$.

\end{lemma}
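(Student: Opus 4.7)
The plan is to reduce both estimates to a single one-dimensional linear-in-time inequality via the standard conjugation trick, and then invoke the classical Duhamel estimate for $X^{s,b}$ spaces. Since the two stated inequalities are structurally identical, differing only by replacement of the phase $\xi^3$ with $\al\xi^3$, and since no step of the argument is sensitive to this coefficient, it suffices to prove the first; the second follows mutatis mutandis.

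The first step is to conjugate out the free evolution. Set $w(x,t) = S(-t)F(x,t)$. A direct Fourier computation shows $\ft{w}(\xi,\tau) = \ft{F}(\xi,\tau+\xi^3)$, and hence, by the change of variables $\tau \mapsto \tau + \xi^3$,
\[ \|F\|_{X^{s,b-1}} = \big\|\jb{\xi}^s \jb{\tau}^{b-1} \ft{w}(\xi,\tau)\big\|_{L^2_{\xi,\tau}}. \]
Writing $u(x,t) = \eta(t)\,(S*_R F)(x,t)$, the same identity yields
\[ S(-t)u(x,t) = \eta(t) \int_0^t w(x,t')\,dt', \]
so the $X^{s,b}$ norm of $u$ equals $\big\|\jb{D_x}^s \jb{D_t}^b \bigl(\eta(t)\int_0^t w(x,t')\,dt'\bigr)\big\|_{L^2_{x,t}}$, with $\jb{D_x}^s$ and $\jb{D_t}^b$ denoting the obvious Fourier multipliers. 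The spatial weight $\jb{D_x}^s$ commutes with the time integration and with multiplication by $\eta(t)$; absorbing it into $w$ and applying Plancherel in $\xi$, the inequality reduces to showing, for every $v \in H^{b-1}_t(\R)$, the purely one-dimensional bound
\[ \Big\|\eta(t) \int_0^t v(t')\,dt'\Big\|_{H^b_t(\R)} \lesssim \|v\|_{H^{b-1}_t(\R)}. \]

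This 1D estimate is the classical Duhamel estimate in the $X^{s,b}$ framework, proved in Kenig-Ponce-Vega \cite{KPV6}. A brief sketch: decompose $v$ dyadically with respect to temporal frequency $\tau$; on the low-frequency piece, Taylor-expand $e^{-it'\tau}$ in $t'$ and use the rapid decay of $\ft{\eta}$ together with the smoothness of $\eta$; on the high-frequency piece, integrate by parts in $t'$ to trade a derivative for a factor of $\jb{\tau}^{-1}$ and then pair the resulting expression against $\eta$. Both contributions yield the claimed bound for $\tfrac{1}{2} < b < 1$. The only genuine obstacle is the borderline case $b = \tfrac{1}{2}$, but this does not concern us on $\R$ since we are free to take $b = \tfrac{1}{2}+$ throughout the non-periodic theory; consequently no auxiliary space analogous to $Y^s$ is needed here.
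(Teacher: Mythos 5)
Your proof is correct and follows the standard route: the paper itself offers no argument for this lemma, simply citing the classical computation in Kenig--Ponce--Vega \cite{KPV6}, which is exactly the conjugation-to-a-one-dimensional-estimate argument you carry out. Your reduction, the observation that the $\al$ case is identical, and the remark that $b=\frac{1}{2}+$ on $\R$ obviates any $Y^s/Z^s$-type modification are all consistent with how the paper uses the lemma.
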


Then, Theorem \ref{THM:LWPonR} follows once we prove  \eqref{Rbilinear1} and \eqref{Rbilinear2}.
Indeed, the following propositions show that the estimates 
are sharp in $L_x^2(\R)$.

\begin{proposition} \label{PROP:Rbilinear} 
The bilinear estimates \eqref{Rbilinear1} and  \eqref{Rbilinear2}
hold for $s \geq 0$ with some $b > \frac{1}{2}$. 
\end{proposition}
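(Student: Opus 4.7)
The plan is to prove \eqref{Rbilinear1} and \eqref{Rbilinear2} by duality, following the Kenig--Ponce--Vega strategy adapted to the coupled setting. Since the spatial Fourier variable on $\R$ is continuous, no arithmetic considerations enter, and the cubic phase structure alone suffices to close the estimates at $s \geq 0$ for any $b \in (1/2, 3/4]$.

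For \eqref{Rbilinear1}, I would introduce the bilinear operator $\mathcal{B}_{s,b}(f,g)$ exactly as in \eqref{Zdual1} but with the $\xi$-sum replaced by a $\xi$-integral on $\R$, so that \eqref{Rbilinear1} is equivalent to $\|\mathcal{B}_{s,b}(f,g)\|_{L^2_{\xi,\tau}} \lesssim \|f\|_{L^2}\|g\|_{L^2}$. Using the elementary bound $\jb{\xi}^s \lesssim \jb{\xi_1}^s \jb{\xi_2}^s$, valid for $s \geq 0$ since $|\xi| \leq |\xi_1|+|\xi_2|$, followed by the Cauchy--Schwarz inequality in $(\xi_1,\tau_1)$ and Fubini, the boundedness reduces to the uniform estimate
\[
\sup_{\xi,\tau}\, \frac{\xi^2}{\jb{\tau - \xi^3}^{2(1-b)}}\, \iint \frac{d\xi_1\, d\tau_1}{\jb{\tau_1 - \al\xi_1^3}^{2b}\jb{\tau - \tau_1 - \al\xi_2^3}^{2b}} \lesssim 1.
\]
For $b > 1/2$, the standard calculus lemma $\int \frac{dx}{\jb{x}^{2b}\jb{x-y}^{2b}} \lesssim \jb{y}^{-2b}$ bounds the $\tau_1$-integral by $\jb{\tau - \al\xi_1^3 - \al\xi_2^3}^{-2b}$. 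Using the identity $\xi_1^3 + \xi_2^3 = \xi^3 - 3\xi\xi_1\xi_2$ and completing the square, the remaining phase takes the form $\Phi(\xi_1) = M - 3\al\xi(\xi_1 - \xi/2)^2$ with $M = \tau - \al\xi^3/4$. The substitution $u = (\xi_1 - \xi/2)\sqrt{3\al|\xi|}$, together with the one-dimensional estimate $\int du/\jb{M - u^2}^{2b} \lesssim \jb{M}^{-1/2}$ which holds for $b > 1/2$, gives
\[
\int \frac{d\xi_1}{\jb{\Phi(\xi_1)}^{2b}} \lesssim \frac{1}{|\xi|^{1/2}\jb{\tau - \al\xi^3/4}^{1/2}}.
\]

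What remains is to verify $|\xi|^{3/2} \lesssim \jb{\tau - \xi^3}^{2(1-b)}\jb{\tau - \al\xi^3/4}^{1/2}$. The case $|\xi| \leq 1$ is trivial. For $|\xi| \gg 1$ I would split into two regimes. When $|\tau - \xi^3| \leq |\xi|^3/100$, the identity $\tau - \al\xi^3/4 = (\tau - \xi^3) + (1 - \al/4)\xi^3$ with $1 - \al/4 \sim 1$ for $\al \in (0,1)$ forces $\jb{\tau - \al\xi^3/4} \sim |\xi|^3$, which absorbs the $|\xi|^{3/2}$ factor. When $|\tau - \xi^3| \gtrsim |\xi|^3$, the factor $\jb{\tau - \xi^3}^{2(1-b)} \gtrsim |\xi|^{6(1-b)}$ does the same job provided $2(1-b) \geq 1/2$, i.e.\ $b \leq 3/4$. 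Any $b \in (1/2, 3/4]$ thus closes \eqref{Rbilinear1} uniformly on $s \geq 0$.

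The proof of \eqref{Rbilinear2} follows the same scheme with phase $\Phi'(\xi_1) = \tau - \xi_1^3 - \al\xi_2^3$, which factors, after subtracting the output modulation $\tau - \al\xi^3$, through the cubic $\al\xi^3 - \xi_1^3 - \al\xi_2^3 = -(1-\al)\xi_1(\xi_1 - d_1\xi)(\xi_1 - d_2\xi)$. The main technical difference is that $\Phi'$ is genuinely cubic in $\xi_1$, with two critical points $\xi_1 = \sqrt{\al}\xi/(\sqrt{\al} \pm 1)$, so the measure of the level set $\{\xi_1 : |\Phi'(\xi_1)| \sim M\}$ must be obtained by a dyadic analysis in the spirit of Lemma \ref{closetocubic11}, but without any discretization loss. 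The resulting $\xi_1$-integral exhibits the same inverse-square-root behavior at the critical levels, yielding a bound of the same form, and the analogous case split in $(\xi,\tau)$ closes the argument. The main obstacle throughout is the careful bookkeeping in this last case split; however, the continuous nature of $\xi$ removes any Diophantine obstruction and delivers the sharp threshold $s = 0$ uniformly in $\al \in (0,1)$.
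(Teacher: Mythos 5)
Your treatment of \eqref{Rbilinear1} is correct and is essentially the paper's argument: the same Cauchy--Schwarz reduction to a sup bound, the same calculus lemmas (Lemma \ref{calc1} (a)--(b), packaged in the paper as Lemma \ref{calc2} via \cite{KPV4}), and the same final dichotomy, which the paper records in Remark \ref{KPVremark1} as the statement that at least one of $\jb{\tau-\xi^3}$, $\jb{4\tau-\al\xi^3}$ is $\sim|\xi|^3$ because $\al\neq 4$. Your explicit check that $1-\al/4\sim 1$ for $\al\in(0,1)$ is exactly the point on which that half of the proposition turns.

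The gap is in \eqref{Rbilinear2}, which is where the paper spends almost all of its effort and which you dispatch in a few sentences. Your plan --- quadratic approximation of the phase $F(\xi_1)=\tau-\xi_1^3-\al\xi_2^3$ near its two critical points $r_1\xi=\tfrac{\sqrt{\al}}{1+\sqrt{\al}}\xi$, $r_2\xi=-\tfrac{\sqrt{\al}}{1-\sqrt{\al}}\xi$, giving $\int\jb{F}^{-2b}d\xi_1\lesssim|\xi|^{-1/2}\jb{\tau-(\al+\kappa_j)\xi^3}^{-1/2}$ with $\kappa_j$ the critical value of $r\mapsto(1-\al)r^3+3\al r^2-3\al r$ --- can be made to work, but only because $\kappa_j\neq 0$ for $\al\in(0,1)$, i.e.\ because the critical points $r_1,r_2$ of the phase are not among the resonance roots $0,d_1,d_2$ of \eqref{d_1 and d_2}. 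This is precisely the fact you never verify, and it is not automatic: for \eqref{Rbilinear1} the analogous coincidence \emph{does} occur at $\al=4$ (the critical point $\xi/2$ becomes the resonance point $c_1\xi=c_2\xi$) and the estimate breaks there, which is why the paper flags $\al\neq 4$. Without checking that $\kappa_j\neq 0$, the claim that ``the analogous case split in $(\xi,\tau)$ closes the argument'' is an assertion, not a proof; if some $\kappa_j$ vanished, the critical value would equal the output characteristic $\tau-\al\xi^3$ and neither factor in your dichotomy would be large. For comparison, the paper avoids computing critical values altogether: it reduces to \eqref{calc44}, covers $\{|\xi|\gtrsim1\}$ by the region $\{|F'(\xi_1)|\gtrsim|\xi|^2\}$ (handled by the monotone substitution $\mu=F(\xi_1)$ on the three monotonicity intervals) and its complement, shows that on the complement $|G(\xi_1)|\gtrsim|\xi|^2$ and $|\xi_1|\sim|\xi|$ via the identity $\tfrac13F'-G=\al\xi(\xi_1-2\xi)$, deduces $\max$ of the three modulations $\gtrsim|\xi_1 G(\xi_1)|\gtrsim|\xi|^3$, and then runs a three-way case analysis on which modulation is maximal. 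Either route is fine, but yours needs the non-degeneracy check (and the actual derivation of the ``bound of the same form,'' including the off-critical contribution) written out before it constitutes a proof.
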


\begin{proposition} \label{PROP:Rcounterexample} 
If $s < 0$, the bilinear estimates \eqref{Rbilinear1} and  \eqref{Rbilinear2}
fail  for any $b \in \mathbb{R} $. 
\end{proposition}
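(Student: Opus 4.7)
My plan is to adapt the construction in the proof of Proposition \ref{PROP:counterexample1} to the non-periodic setting, exploiting the fact that on $\R$ the spatial Fourier variable $\xi$ is continuous so the test functions can be placed \emph{exactly} on the resonance curves with no Diophantine penalty. First pass to the dual formulation, rewriting \eqref{Rbilinear1} (respectively \eqref{Rbilinear2}) as the $L^2_{\xi,\tau}(\R^2)$-boundedness of the bilinear operator $\mathcal{B}_{s,b}$ defined by the weights in \eqref{Zdual1} with the sum $\sum_{\xi_1+\xi_2=\xi}$ replaced by the integral $\int d\xi_1$.

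For \eqref{Rbilinear1}, for each $N \gg 1$ I would take $f_N = \chi_{A_N}$ and $g_N = \chi_{B_N}$, where
\[
A_N = \{(\xi_1,\tau_1): |\xi_1 - c_1 N| \le N^{-2},\ |\tau_1 - \al(c_1 N)^3| \le 1\},
\]
and $B_N$ is the analogous rectangle centered at $(c_2 N,\al(c_2 N)^3)$, with $c_1,c_2$ the resonance roots from \eqref{c_1}. The width $N^{-2}$ in the $\xi$-direction is the critical scale: the Taylor expansion
\[
\al\xi_1^3 + \al\xi_2^3 - \xi^3 = 3N^2\big[(\al c_1^2 - 1)(\xi_1 - c_1 N) + (\al c_2^2 - 1)(\xi_2 - c_2 N)\big] + O(N^{-3}),
\]
together with the resonance identity $\al c_1^3 + \al c_2^3 = 1$ and $\xi = \xi_1 + \xi_2$, forces $|\tau - \xi^3| \lesssim 1$ on the Minkowski sum $A_N + B_N$. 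Hence all three weights $\jb{\tau-\xi^3}$, $\jb{\tau_1-\al\xi_1^3}$, $\jb{\tau_2-\al\xi_2^3}$ are $\sim 1$ on the relevant region, so every $b$-dependent factor drops out --- this is the key point that yields a counterexample valid simultaneously for every $b \in \R$.

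A routine convolution computation on a central rectangle $S_N \subset A_N + B_N$ of measure $\sim N^{-2}$, using $\jb{\xi}\sim\jb{\xi_1}\sim\jb{\xi_2}\sim N$ and the fact that the $(\xi_1,\tau_1)$-slice of the inner integrand has measure $\sim N^{-2}$ there, gives $\mathcal{B}_{s,b}(f_N,g_N)(\xi,\tau)\gtrsim N^{1+s}\cdot N^{-2s}\cdot N^{-2} = N^{-1-s}$. Therefore $\|\mathcal{B}_{s,b}(f_N,g_N)\|_{L^2}\gtrsim N^{-2-s}$, while $\|f_N\|_{L^2}\|g_N\|_{L^2}\sim N^{-2}$, and the estimate would force $N^{-s}\lesssim 1$ uniformly in $N$, which fails whenever $s<0$ as $N\to\infty$.

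The estimate \eqref{Rbilinear2} is handled by the same scheme after replacing $(c_1,c_2)$ by the resonant pair $(d_1,1-d_1)$ from \eqref{resonance2}, with $f_N$ placed on the $\xi^3$-curve at $(d_1 N,(d_1 N)^3)$ (matching $u \in X^{s,b}$) and $g_N$ on the $\al\xi^3$-curve at $((1-d_1)N, \al((1-d_1)N)^3)$; the output weight $\jb{\tau-\al\xi^3}$ stays $\sim 1$ by virtue of $d_1^3+\al(1-d_1)^3=\al$ and the analogous $O(N^2)\cdot O(N^{-2})$ Taylor estimate. The main delicacy, and the single point that requires care, is pinning down the scale $N^{-2}$ precisely: any wider and $\jb{\tau-\xi^3}$ grows by a positive power of $N$ that could be compensated by the exponent $1-b$ for a suitable $b$, reintroducing $b$-dependence and weakening the conclusion; any narrower and $\|f_N\|_{L^2}\|g_N\|_{L^2}$ shrinks past the threshold. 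It is precisely the quadratic vanishing of the resonance polynomial at its simple roots that selects this single critical scale and thereby produces a uniformly $b$-free lower bound.
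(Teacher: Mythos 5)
Your construction is correct and is essentially identical to the paper's own proof: the paper likewise takes characteristic functions of rectangles of dimensions $\sim N^{-2}\times 1$ centered at $\big(c_jN,\al(c_jN)^3\big)$ on the resonance curve, uses the convolution lower bound of Lemma \ref{rectangles} on a central rectangle $R_0$ of the same size, and arrives at the same comparison $N^{-2-s}\lesssim N^{-2}$, forcing $s\geq 0$. Your Taylor-expansion justification that all three modulation weights stay $\sim 1$ (hence the $b$-independence) and your treatment of \eqref{Rbilinear2} via the pair $(d_1,1-d_1)$ match the paper's argument as well.
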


\noindent
The proof of Proposition \ref{PROP:Rbilinear} is based on the usual argument 
with H\"older inequality, calculus lemmata, and change of variables in integration. (c.f. \cite{KPV4}.)
Since $\al \ne1$, we need to separate the domain more carefully.
First, we list some calculus lemmata.

\begin{lemma} [ Kenig-Ponce-Vega \cite{KPV4},  Bekiranov-Ogawa-Ponce \cite{BOP1}]\label{calc1}
For $l > \frac{1}{2}$, we have

\noindent
\textup{(a)}
\[ \int_\mathbb{R} \frac{dx}{\jb{x - \al}^{2l}\jb{x - \beta}^{2l}} \lesssim \frac{1}{\jb{ \al - \beta}^{2l}}.\]

\noindent
\textup{(b)}
\[ \int_\mathbb{R} \frac{dx}{\jb{x }^{2l} \sqrt{| \al - x |}} \lesssim \frac{1}{\jb{ \al}^{\frac{1}{2}}}.\]

\noindent
\textup{(c)}
For $l >\frac{1}{3}$,
\[ \int_\mathbb{R} \frac{dx}{\jb{ x^3  + a_2 x^2  + a_1x+a_0}^l} \lesssim  1. \]

\end{lemma}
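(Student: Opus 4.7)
The plan is to prove parts (a), (b), (c) by elementary calculus: a domain split for (a) and (b), and a uniform sublevel-set / layer-cake argument for (c). No deep ingredients are required.

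For part (a), I would split $\mathbb{R} = A \cup B$ with $A = \{x : |x - \al| \leq |\al - \beta|/2\}$ and $B = \mathbb{R}\setminus A$. On $A$, the triangle inequality gives $|x - \beta| \geq |\al - \beta|/2$, hence $\jb{x-\beta}^{-2l} \lesssim \jb{\al-\beta}^{-2l}$ pulls outside, leaving $\int_A \jb{x-\al}^{-2l}dx \lesssim 1$ by the hypothesis $2l > 1$. On $B$, the roles of $\al$ and $\beta$ interchange and the same bound results. This is routine.

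For part (b), the only non-trivial feature is the integrable singularity at $x = \al$. I would partition $\mathbb{R}$ at the scale $\jb{\al}/2$ around $\al$. On the inner region $|x-\al| \leq \jb{\al}/2$ we have $\jb{x} \sim \jb{\al}$, and pulling $\jb{\al}^{-2l}$ out leaves $\int_{|x-\al|\leq \jb{\al}/2}|x-\al|^{-1/2}dx \sim \jb{\al}^{1/2}$, giving $\jb{\al}^{1/2 - 2l} \leq \jb{\al}^{-1/2}$ precisely because $l > 1/2$. On the outer region $|x-\al| \gtrsim \jb{\al}$, the singular factor satisfies $|\al-x|^{-1/2} \lesssim \jb{\al}^{-1/2}$, so it pulls out and the remaining $\int \jb{x}^{-2l}dx$ is finite.

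The main obstacle is part (c), where the cubic $p(x) = x^3 + a_2 x^2 + a_1 x + a_0$ may have widely separated roots and a derivative that degenerates on its critical set, ruling out any naive change of variables $y = p(x)$ with uniform constants. My plan is to bypass this entirely via a sublevel-set bound that is uniform in the coefficients. Factoring $p(x) = \prod_{i=1}^{3}(x - z_i)$ over $\mathbb{C}$ and using $|x - z_i| \geq |x - \operatorname{Re}(z_i)|$ for real $x$, the inequality $|p(x)| \leq t$ forces $|x - z_i| \leq t^{1/3}$ for at least one $i$, hence $|x - \operatorname{Re}(z_i)| \leq t^{1/3}$. This yields
\begin{equation*}
\big|\{x \in \mathbb{R} : |p(x)| \leq t\}\big| \leq 6\,t^{1/3}
\end{equation*}
with constant independent of $a_0, a_1, a_2$. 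Combining this with the layer-cake identity
\begin{equation*}
\int_{\mathbb{R}} \jb{p(x)}^{-l} dx = \int_0^1 \big|\{x : |p(x)| < s^{-1/l} - 1\}\big|\,ds,
\end{equation*}
the estimate reduces to checking $\int_0^1 s^{-1/(3l)}ds < \infty$, which holds exactly when $l > 1/3$, matching the hypothesis. The only subtlety is verifying the sublevel-set bound, which is the step I expect to highlight.
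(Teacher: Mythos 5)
Your proof is correct in all three parts; the splittings in (a) and (b) go through exactly as you describe, and in (c) the sublevel-set bound $\big|\{x \in \R : |p(x)| \leq t\}\big| \leq 6\,t^{1/3}$ (union of three intervals of length $2t^{1/3}$ around $\operatorname{Re} z_i$, using $|x - z_i| \geq |x - \operatorname{Re} z_i|$ for real $x$) combined with the layer-cake identity does yield a bound uniform in $a_0, a_1, a_2$, which is the essential point. Note, however, that the paper itself contains no proof of this lemma: it is imported as a known calculus lemma with citations to Kenig--Ponce--Vega \cite{KPV4} and Bekiranov--Ogawa--Ponce \cite{BOP1}, so the real comparison is with those references. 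Your arguments for (a) and (b) are the standard ones found there. For (c) your route is genuinely different: the references treat such cubic-weight integrals by monotonicity and change of variables $u = p(x)$ on pieces where $|p'|$ is suitably bounded below, with separate handling near the critical points of the cubic, whereas your complex-root factorization avoids any case analysis on the discriminant or critical set. This buys both simplicity and generality -- the identical argument gives $\int_\R \jb{P(x)}^{-l}\, dx \lesssim_d 1$ for any monic polynomial $P$ of degree $d$ whenever $l > 1/d$, with constant independent of all coefficients -- at no cost in the application, since the paper only uses (c) through this uniformity.
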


\begin{proof}[Proof of Proposition \ref{PROP:Rbilinear}]

Note that $\jb{\xi}^s \lesssim \jb{\xi_1}^s\jb{\xi_2}^s$ for $s \geq 0$, 
and thus we prove \eqref{Rbilinear1} and \eqref{Rbilinear2} only for $ s= 0$.
First, we  prove  \eqref{Rbilinear1}.
As usual,  define the bilinear operator $\mathcal{B}_{s, b}$ by 
\begin{equation} \label{Rdual}
\mathcal{B}_{s, b} \big(f, g\big) (\xi, \tau) = \frac{\xi \jb{\xi}^s}{\jb{\tau - \xi^3}^{1-b} }
\iintt_{\substack{\xi = \xi_1 + \xi_2 \\\tau = \tau_1+\tau_2}}
\frac{f(\xi_1, \tau_1) g(\xi_2, \tau_2)}{ \jb{\xi_1}^s \jb{\xi_2}^s \jb{\tau_1 - \al \xi_1^3}^b \jb{\tau_2-\al\xi_2^3}^b}d\xi_1d\tau_1.
\end{equation}

\noindent
Then, \eqref{Rbilinear1} holds for $ s= 0$ if and only if
\begin{equation} \label{Rdual11} 
\left\| \mathcal{B}_{0, b}(f, g) \right\|_{L^2_{\xi, \tau} }
\lesssim  \| f \|_{L^2_{\xi, \tau} } \| g \|_{L^2_{\xi, \tau}} . 
\end{equation}

\noindent
We have the following lemma, which is basically proved in \cite[Lemma 2.4]{KPV4}.

\begin{lemma} \label{calc2}
For $\frac{1}{2} < b \leq \frac{3}{4}$, we have
\begin{equation} \label{calc22}
\sup_{\xi, \tau} \frac{|\xi|}{\jb{\tau - \xi^3}^{1-b} }
\bigg( \iint \frac{d \xi_1 d\tau_1}{\jb{\tau_1 - \al \xi_1^3}^{2b}\jb{\tau_2 - \al \xi_2^3}^{2b}} \bigg)^\frac{1}{2} \lesssim 1.
\end{equation}
\end{lemma}

\noindent
Then, by H\"older inequality and Lemma \ref{calc2}, we have
\begin{align*}
 \left\| \mathcal{B}_{0, b}(f, g) \right\|_{L^2_{\xi, \tau} }
&\lesssim \bigg\|\frac{|\xi|}{\jb{\tau - \xi^3}^{1-b} }
\bigg( \iint \frac{d \xi_1 d\tau_1}{\jb{\tau_1 - \al \xi_1^3}^{2b}\jb{\tau_2 - \al \xi_2^3}^{2b}} \bigg)^\frac{1}{2} \bigg\|_{L^\infty_{\xi, \tau}} \\
& \hphantom{XXXXXX} \times \bigg\| \bigg( \iint |f(\xi_1, \tau_1)|^2 |g(\xi - \xi_1, \tau - \tau_1)|^2  d \xi_1 d \tau_1\bigg)^\frac{1}{2} \bigg\|_{L^2_{\xi, \tau}} \\
& \lesssim \| f \|_{L^2_{\xi, \tau} } \| g \|_{L^2_{\xi, \tau}} . 
\end{align*}

\noindent
Hence,  \eqref{Rdual11} holds for $s\geq 0$ and $\frac{1}{2} < b \leq \frac{3}{4}$. 

\begin{remark} \label{KPVremark1}
\rm
By going through the proof of Lemma \ref{calc2} in \cite[Lemma 2.4]{KPV4}, 
using Lemma \ref{calc1} (a) and (b), 
we see that 
$ \eqref{calc22} \lesssim \sup_{\xi, \tau} \frac{|\xi|^\frac{3}{4}}{\jb{\tau - \xi^3}^{1-b} \jb{4\tau - \al \xi^3}^\frac{1}{4}} \lesssim 1,$
\noindent
holds true for $\frac{1}{2} < b \leq \frac{3}{4}$ and $0 <\al <1$.  
Note that it is crucial to have $\al \ne 4$ (and 0), 
which guarantees that at least one of $\jb{\tau - \xi^3}$ or $\jb{4\tau - \al \xi^3}$ is $\sim |\xi|^3$
for any $\xi, \tau \in \R$.

\end{remark}

Now, we turn to the proof of \eqref{Rbilinear2}.
Define the bilinear operator $\wt{\mathcal{B}}_{s, b}$ by 
\[\wt{\mathcal{B}}_{s, b} \big(f, g\big) (\xi, \tau)= \frac{\xi \jb{\xi}^s }{\jb{\tau - \al \xi^3}^{1-b} }
\iintt_{\substack{\xi = \xi_1 + \xi_2 \\\tau = \tau_1+\tau_2}}
\frac{f(\xi_1, \tau_1) g(\xi_2, \tau_2)}{ \jb{\xi_1}^s\jb{\xi_2}^s\jb{\tau_1 - \xi_1^3}^b \jb{\tau_2-\al\xi_2^3}^b}d\xi_1d\tau_1.\]

\noindent
Then, \eqref{Rbilinear2} holds for $s = 0$ if and only if
$\big\| \wt{\mathcal{B}}_{0, b}(f, g) \big\|_{L^2_{\xi, \tau} }
\lesssim  \| f \|_{L^2_{\xi, \tau} } \| g \|_{L^2_{\xi, \tau}} . $
As before, by H\"older inequality, we have 
\begin{align*}
\big\| \wt{\mathcal{B}}_{0, b}(f, g) \big\|_{L^2_{\xi, \tau} } 
\leq \bigg\|\frac{|\xi|}{\jb{\tau - \al \xi^3}^{1-b} }
\bigg( \iint \frac{d \xi_1 d\tau_1}{\jb{\tau_1 -  \xi_1^3}^{2b}\jb{\tau_2 - \al \xi_2^3}^{2b}} \bigg)^\frac{1}{2} \bigg\|_{L^\infty_{\xi, \tau}} \| f \|_{L^2_{\xi, \tau} } \| g \|_{L^2_{\xi, \tau}} . 
\end{align*}

\noindent
Therefore, it suffices to prove
\begin{equation} \label{calc4}
\sup_{\xi, \tau} \frac{|\xi|}{\jb{\tau - \al \xi^3}^{1-b}} 
\bigg( \iint \frac{d \xi_1 d\tau_1}{\jb{\tau_1 -  \xi_1^3}^{2b}\jb{\tau_2 - \al \xi_2^3}^{2b}} \bigg)^\frac{1}{2} 
\lesssim 1.
\end{equation}

\noindent
Moreover,  by applying Lemma \ref{calc1} (a)  to the integration in $\tau_1$ of \eqref{calc4}, it also suffices to show
\begin{equation} \label{calc44}
\sup_{\xi, \tau} \frac{|\xi|}{\jb{\tau - \al \xi^3}^{1-b}} 
\bigg( \int \frac{d \xi_1 }{\jb{\tau- \xi_1^3 - \al \xi_2^3  }^{2b}} \bigg)^\frac{1}{2} 
\lesssim 1.
\end{equation}

\noindent
Hence, we  divide $\mathbb{R}^4 = \{ (\xi, \xi_1, \tau, \tau_1) \}$ into finitely many regions and prove that
either \eqref{calc4} or \eqref{calc44} holds in each of them.
If $|\xi| \lesssim 1$, then  for $\frac{1}{6} < b \leq 1$, we have
$  \frac{|\xi|}{\jb{\tau - \al \xi^3}^{1-b}} \lesssim 1$.
Also, by Lemma \ref{calc1} (c), 
$\int \frac{d \xi_1 }{\jb{\tau- \xi_1^3 - \al (\xi - \xi_1)^3  }^{2b}} \lesssim 1$.
Hence, \eqref{calc44} holds when $|\xi| \lesssim 1$.

For the following argument, assume $\xi \gtrsim 1$.  Then, for fixed $\xi > 0$ and $\tau$, let 
\[F(\xi_1) = \tau - \al (\xi - \xi_1)^3 - \xi_1^3 = \tau - \al\xi^3 - \big( (1-\al) \xi_1^3 + 3\al \xi \xi_1^2 - 3 \al \xi^2 \xi_1\big).\]

\noindent
Then, 
$F'(\xi_1) = - 3 \big( (1-\al) \xi_1^2 + 2\al \xi \xi_1 -  \al \xi^2 \big).$
By solving $F'(\xi_1) = 0$, we have $\xi_1 = r_1 \xi, r_2 \xi$, 
where $r_1 = \frac{\al^\frac{1}{2}}{1+ \al ^\frac{1}{2}} $ and $r_2 =  -\frac{\al^\frac{1}{2}}{1- \al ^\frac{1}{2}}$.
Then, it follows that $\mu = F(\xi_1)$ is monotone on each of 
$ (-\infty, r_2 \xi )$, $ [r_2 \xi, r_1 \xi)$, and $ [r_1 \xi, \infty). $
Now, suppose $|F'(\xi_1)| \gtrsim |\xi|^2$. Then, by  change of variables of $\mu = F(\xi_1)$ on each of the intervals above, we have, for $b > \frac{1}{2}$, 
\[ \bigg|\int \frac{d\xi_1}{\jb{\tau - \al (\xi - \xi_1)^3 - \xi_1^3}^{2b}} \bigg| \lesssim \frac{1}{|\xi|^2} \bigg| \int \frac{F'(\xi_1)}{\jb{F(\xi_1)}^{2b}} d\xi_1 \bigg|
= \frac{1}{|\xi|^2} \int \frac{1}{\jb{\mu}^{2b}} d\mu \lesssim \frac{1}{|\xi|^{2}}. \]

\noindent
Hence, \eqref{calc44} holds, if we assume $|F'(\xi_1)| \gtrsim |\xi|^2$.

Now, let $G(\xi_1)  = (1-\al) \xi_1^2 + 3\al \xi \xi_1 - 3 \al \xi^2$.
i.e.  $\xi_1 G(\xi_1) =  \al\xi^3 - \xi_1^3 - \al \xi_2^3$.
Suppose $|G(\xi_1)| \gtrsim |\xi|^2$
and  $|\xi_1| \sim |\xi|$.
Then, we have
\begin{equation} \label{calc5}
\MAX  :=  \max(\jb{\tau - \al \xi^3}, \jb{ \tau_1 - \xi_1^3 }, \jb{\tau_2 - \al \xi_2^3} )
 \gtrsim |\xi_1 G(\xi_1)| \gtrsim |\xi_1| |\xi|^2 \gtrsim |\xi|^3 \text{ on } A.
\end{equation}

\noindent
If $\MAX = \jb{\tau_1 - \xi_1^3}$, then  LHS of \eqref{calc4} is at most
\begin{align*}
 \lesssim 
\frac{|\xi|^{1-2b}} {\jb{\tau - \al \xi^3}^{1-b}} \bigg( \int \frac{d\xi_1}{\jb{\xi_1}^{2b}} \bigg)^\frac{1}{2}
\sup_{\xi_1} \bigg(\int \frac{d\tau_2}{\jb{\tau_2 - \al(\xi-\xi_1)^3}^{2b}}\bigg)^\frac{1}{2} 
 \lesssim \jb{\xi}^{1-2b} \leq 1,
\end{align*}

\noindent 
for $b > \frac{1}{2}$. Hence, \eqref{calc4} holds.
A similar computation shows that \eqref{calc4} holds if $\MAX = \jb{\tau_2 - \al \xi_2^3}$.
Lastly, if $\MAX = \jb{\tau - \al \xi^3}$, then by \eqref{calc5}, we have
$ \frac{|\xi|}{\jb{\tau - \al \xi^3}^{1-b}} \lesssim |\xi|^{-2+3b} \lesssim 1$
for $ b \leq \frac{2}{3}, $ 
and by Lemma \ref{calc1} (c), 
$ \int \frac{d\xi_1}{\jb{\tau - \xi_1^3 - \al (\xi - \xi_1)^3}^{2b}} \lesssim 1$
for $ b > \frac{1}{6}. $ 
Hence, \eqref{calc44} holds in this case.

Finally, we need to show that 
\begin{equation} \label{SUBdivision}
\{ (\xi, \xi_1, \tau, \tau_1): \xi \gtrsim 1\} \subset 
\{|F'(\xi_1)| \gtrsim |\xi|^2\} \cap \{|G(\xi_1)| \gtrsim |\xi|^2, |\xi_1|\sim |\xi| \}.
\end{equation}

\noindent
Consider
$H(\xi_1) := \tfrac{1}{3}F'(\xi_1) - G(\xi_1) = \al\xi\xi_1 - 2 \al \xi^2 = \al\xi(\xi_1 - 2\xi).$
Then,  we have $H(\xi_1) \leq - \al \xi^2$ for $\xi_1 \leq \xi$. 
Hence, we have 
$ \max \big( |F'(\xi_1)|, |G(\xi_1)| \big)\gtrsim |\xi|^2 $ for  $\xi_1 \leq \xi.$
Also, we have $|F'(\xi_1)| \gtrsim \xi^2$ for $\xi_1 \geq \xi > 0$.
Hence, we have $\max \big( |F'(\xi_1)|, |G(\xi_1)| \big)\gtrsim |\xi|^2 $ for any $\xi_1 \in \mathbb{R}$.
Now, suppose $|F'(\xi_1)| \ll |\xi|^2$.
Since $\al < 1$, $F'(\xi_1) $ is a downward parabola whose vertical intercept is $3 \al \xi^2$.
Then, one can easily show that $|\xi_1| \sim |\xi|$ in this case (say, by solving $|F'(\xi_1) |\leq \al |\xi|^2$.)
Hence, \eqref{SUBdivision} holds.
This completes the proof of \eqref{Rbilinear2}.
\end{proof}

Next, we  present the proof of Proposition \ref{PROP:Rcounterexample}.
The basic idea of the proof is the same as that for KdV in \cite{KPV4}.
Note that, unlike KdV, the resonances occur at non-symmetric points (e.g. at $(\xi_1, \xi_2) = (c_1 N, c_2N)$ for \eqref{Rbilinear1}),
and there are two curves $\tau = \xi^3$ and $\tau =\al \xi^3$ under consideration.
Thus, instead of taking the rectangles parallel to the curves, 
we take them to be parallel to the $\xi$- and $\tau$-axes.

First, recall the following estimate for the convolution of the characteristic functions of two {\it parallel} rectangles.
It follows from a straightforward computation and thus we  omit its proof.  See \cite{OHTHESIS}.

\begin{lemma} \label{rectangles}
Let $R$ and $ \wt{R}$ be rectangles centered at $(a, b)$ and $(\wt{a}, \wt{b})$
whose dimensions are $2\al \times 2\beta$.
Let $R_0$ be the parallel translate of $R$ centered at $(a+\wt{a}, b+\wt{b})$.
Then, we have
\begin{equation*}
\chi_R * \chi_{\wt{R}} (\xi, \tau) \geq \al \beta \chi_{R_0}(\xi, \tau) = \tfrac{1}{4} \Area (R) \chi_{R_0} (\xi, \tau).
\end{equation*}
\end{lemma}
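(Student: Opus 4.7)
The plan is to reduce the inequality to an elementary geometric fact about axis-parallel rectangles. First I will unfold the convolution as
\[
\chi_R * \chi_{\wt R}(\xi,\tau) = \iint \chi_R(\xi',\tau')\, \chi_{\wt R}(\xi-\xi',\tau-\tau')\, d\xi'\, d\tau',
\]
recognizing the right-hand side as the two-dimensional Lebesgue measure of the set $R \cap \big((\xi,\tau) + (-\wt R)\big)$, where $-\wt R$ denotes the reflection of $\wt R$ through the origin. Since $\wt R$ is axis-parallel of dimensions $2\al\times 2\beta$ centered at $(\wt a,\wt b)$, the reflection $-\wt R$ is the axis-parallel rectangle of the same dimensions centered at $(-\wt a,-\wt b)$; translating by $(\xi,\tau)$ gives the axis-parallel rectangle of dimensions $2\al\times 2\beta$ centered at $(\xi - \wt a,\tau - \wt b)$.

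Next I will restrict to $(\xi,\tau) \in R_0$, so that $|\xi - (a+\wt a)| \le \al$ and $|\tau - (b+\wt b)|\le \beta$. Under this restriction the two axis-parallel rectangles whose intersection is being measured---namely the fixed $R$ centered at $(a,b)$ and the translated-reflected copy centered at $(\xi - \wt a, \tau - \wt b)$---have centers separated by a displacement $(\Delta\xi,\Delta\tau)$ with $|\Delta\xi|\le \al$ and $|\Delta\tau|\le \beta$.

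The remaining step is the elementary observation that two axis-parallel rectangles of identical dimensions $2\al\times 2\beta$ whose centers differ by $(\Delta\xi,\Delta\tau)$ with $|\Delta\xi|\le 2\al$ and $|\Delta\tau|\le 2\beta$ intersect in an axis-parallel rectangle of dimensions $(2\al - |\Delta\xi|)\times(2\beta - |\Delta\tau|)$. Plugging in the worst case allowed on $R_0$, namely $|\Delta\xi| = \al$ and $|\Delta\tau|=\beta$, gives an intersection area of at least $\al\cdot\beta = \tfrac14\,\Area(R)$, which is the claimed lower bound.

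No step poses a serious obstacle---the content of the lemma is planar geometry once the convolution has been rewritten as an area of intersection. The two bookkeeping points worth double-checking are the reflection built into the convolution, which is why the translated rectangle is centered at $(\xi-\wt a,\tau-\wt b)$ rather than $(\xi+\wt a,\tau+\wt b)$ and hence traces out precisely $R_0$ as $(\xi,\tau)$ varies, and the implicit assumption---encoded by the word ``parallel'' in the hypothesis---that both rectangles are axis-aligned with the same orientation, without which one cannot decompose the intersection into a product of one-dimensional overlaps.
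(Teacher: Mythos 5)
Your proof is correct: rewriting the convolution as the area of $R \cap \big((\xi,\tau)-\wt{R}\big)$ and computing the overlap of two parallel congruent rectangles whose centers differ by at most $(\al,\beta)$ is exactly the ``straightforward computation'' the paper alludes to (it omits the proof, deferring to the author's thesis). Both bookkeeping points you flag --- the reflection in the convolution placing the translated copy at $(\xi-\wt{a},\tau-\wt{b})$, and the rectangles sharing the same orientation so the intersection factors into one-dimensional overlaps --- are handled correctly.
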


\begin{proof}[Proof of Proposition \ref{PROP:Rcounterexample}]
 We only construct the counterexample to \eqref{Rbilinear1},
 since the construction of the counterexample to \eqref{Rbilinear2} is similar.
Recall that the resonance occurs in this case when $(\xi_1, \xi_2) = (c_1\xi, c_2 \xi)$. 
Thus, we would like to choose $f$ and $g$ such that
$ \supp f \sim \xi_1 = c_1N,$ $\supp g \sim \xi_2 = c_2N,$ and  $\supp f*g  \sim \xi = N. $

For large $N$, consider two rectangles $R_j$ of dimensions $\sim N^{-2} \times 1$ centered at $\big(c_j N, \al (c_jN)^3\big)$, $j = 1, 2$
such that the intersections of $\partial R_j$ and the curve $\tau = \al \xi^3$ are on the horizontal sides of $R_j$.
Note that the last condition can be satisfied for large $N$ since
the smallest slope of $\tau = \al \xi^3$ on $R_j$ is $ \sim 3\al c_j^2(N - N^{-2})^2 \sim N^2 $ for large $N$.
Now, let
$ f(\xi_1, \tau_1) = \chi_{R_1}$ and $ g(\xi_2, \tau_2) = \chi_{R_2}$. 
Then, we have $\| f\|_{L^2_{\xi, \tau}} = \| g\|_{L^2_{\xi, \tau}} \sim N^{-1}.$
On $R_j$, we have
$\jb{ \xi_j}^s \sim N^s$ and $\jb{\tau_j - \al \xi_j^3} \sim 1$.
Moreover, on $R_1 + R_2 = \supp f*g$, we have 
$\xi \sim N \ \text{ and } \ \jb{\tau - \xi^3} \sim 1 $,
since $N^3 = \al (c_1N)^3 + \al (c_2N)^3$.
Let $R_0$ be the rectangle centered at $(N, N^3)$ of the same size as $R_j$.
Then, we have $R_0 \subset R_1 + R_2$ and 
$ f*g(\xi, \tau)  \gtrsim N^{-2} \chi_{R_0}(\xi, \tau)$
by Lemma \ref{rectangles}.

Now, suppose that the bilinear estimate \eqref{Rbilinear1} holds.
i.e. we have
\begin{equation} \label{Rcounterexample1}
\big\| \mathcal{B}_{s, b}(f, g) \big\|_{L^2_{\xi, \tau}} \lesssim \| f\|_{L^2_{\xi, \tau}}\| g\|_{L^2_{\xi, \tau}},
\end{equation}

\noindent
where $\mathcal{B}_{s, b}(\cdot, \cdot)$ is defined in \eqref{Rdual}.
Then, for any large $N$, we have RHS of \eqref{Rcounterexample1} $\sim N^{-2}$, and
$ \text{LHS of }\eqref{Rcounterexample1} \gtrsim N^{1-s} N^{-2} \|\chi_{R_0} \|_{L^2_{\xi, \tau}} \sim N^{-2-s},$
which implies $s \geq 0$.
\end{proof}

\section{On the Ill-posedness Results}

There are also -so called- ``ill-posedness" results for dispersive equations such as the KdV equation \eqref{KDV}.
However, this term often refers to the necessary conditions for uniform continuity or smoothness of the solution map $\Phi_t : u_0 \in H^s \longmapsto u(t) \in H^s$. 
The Cauchy problem is {\it not necessarily ill-posed} in the sense of the usual definition, even when these results hold.
However, since the contraction argument provides smoothness of the solution map
it is often natural to consider a {\it strengthened}  notion of 
well-posedness requiring the solution map to be uniformly continuous/smooth.
In this latter sense, the following results may be regarded as ``ill-posedness" results. 

For KdV, Bourgain \cite{BO3} proved that if the solution map is $C^3$, then $s \geq -\frac{3}{4}$ on $\R$ and
$s \geq -\frac{1}{2}$ on $\T$.  
Tzvetkov \cite{TZ} improved Bourgain's result and showed that $C^2$ is enough on $\R$.
It was also shown that if the solution map is uniformly continuous, 
then $s \geq -\frac{3}{4}$ on $\R$ and
$s \geq -\frac{1}{2}$ on $\T$.
These results are obtained from the corresponding ill-posedness results of mKdV
and the (modified) Miura transform, which is not available for the Majda-Biello system \eqref{MB}.
See Kenig-Ponce-Vega \cite{KPV5} 
and Christ-Colliander-Tao \cite{CCT}.

When $\al = 1$, by setting $v_0 = \sqrt{2} u_0$ and $v(t) = \sqrt{2} u(t)$, we see that
\eqref{MB} reduces to a single KdV equation: $u_t + u_{xxx} + 2u u_x = 0$.
Hence, it follows from \cite{CCT} that the solution map  is not uniformly continuous
for $s < -\frac{3}{4} $ on $\R$ and for  $s < - \frac{1}{2}$ on $\T$.

Let $0 < \al < 1$. 
Following Bourgain \cite{BO3}, consider the following Cauchy problem:
\begin{equation} \label{illposedMB}
\begin{cases}
u_t + u_{xxx} + v v_x = 0 \\
v_t + \al v_{xxx} + (uv)_x = 0\\
\big( u(x, 0), v(x, 0) \big) = \big( \delta \phi(x) ,  \delta \psi(x) \big)
\end{cases}
\end{equation}

\noindent
where $\dl \geq 0$ and $x \in \mathbb{T}$ or $ \mathbb{R}$.
Let $\big( u(x, t;\dl), v(x, t;\dl) \big)$ and $\big( u(t;\dl), v(t;\dl) \big)$ denote the solution to \eqref{illposedMB}.
First, note that with $\dl = 0$, $\big( u(x, t; 0), v(x, t; 0) \big)  \equiv 0$ is the unique solution.
Also, by writing as integral equations, we have
\begin{equation*}
\begin{cases}
u(t;\dl) = \dl S(t) \phi - \int_0^t S(t - t') \dx \big( \frac{v^2}{2} \big) (t') dt' \\
v(t;\dl) = \dl S_\al(t) \psi - \int_0^t S_\al(t - t') \dx \big( uv \big) (t') dt' ,
\end{cases}
\end{equation*}

\noindent 
where $S(t) = e^{t\dx^3}$ and $S_\al(t) = e^{t\al \dx^3}$.
By taking derivatives in $\dl$ at $\dl = 0$,  we have 
$\dd u(t;0) = S(t) \phi =: \phi_1$ and 
$\dd v(t;0) = S_\al(t) \psi =: \psi_1$.
By taking the second and third derivatives in $\dl$ at $\dl = 0$, we have
\begin{align*}
& \begin{cases}
\dd^2 u(t; 0) = - \int_0^t S(t - t') \dx \big( \psi_1 \big) (t') dt'  =: \phi_2 \\
\dd^2 v(t; 0) = - \int_0^t S_\al(t - t') \dx \big( 2 \phi_1 \psi_1 \big) (t') dt'  =: \psi_2, 
\end{cases}
\\ & \begin{cases}
\dd^3 u(t; 0) =  \int_0^t S(t - t') \dx \big( 3 \psi_1 \psi_2 \big) (t') dt'  =: \phi_3 \\
\dd^3 v(t; 0) =  \int_0^t S(t - t') \dx \big( 3 \phi_1 \psi_2+ 3 \phi_2 \psi_1 \big) (t') dt'  =: \psi_3 .
\end{cases}
\end{align*}

Note that if the solution map $\Phi_t: (u_0, v_0)  \in H^s \times H^s 
\longmapsto \big( u(t), v(t) \big) \in H^s \times H^s$ is $C^k$ for fixed $|t| \ll 1$, then we must have
\begin{equation} \label{C^ksmooth}
\big\| \dd^k \big(u, v\big) (\cdot, t; 0)   \big\|_{H^s_x \times H^s_x} 
= \| (\phi_k, \psi_k) (\cdot, t) \|_{H^s_x \times H^s_x}
\lesssim \left\| (\phi, \psi ) \right\|^k_{H^s\times H^s} 
\end{equation}

\noindent
from the smoothness of $\Phi_t$ at the zero solution. 
This yields the ill-posedness results on $\mathbb{T}$ and $\R$:
Theorems \ref{THM:illposedonT}  and  \ref{THM:illposedonR}.
In the following, we assume that \eqref{MB} is  well-posed in $H^s \times H^s$ 
over a small time interval and fix $|t| \ll 1$ such that the solution map $\Phi_t$ is well-defined.

\begin{proof}[Proof of Theorem \ref{THM:illposedonT}]

We only prove the results involving $c_1$.
The results for $d_1$ and $d_2$ can be proved analogously.

\noindent
$\bullet$ {\bf Case (1):} $c_1 \in \mathbb{Q}$

In this case, we can choose $N\in \mathbb{N} \to \infty$ such that $c_1 N \in \mathbb{N}$.
For fixed such $N$, let $\phi \equiv 0$ and  
$\psi(x) = N^{-s} \big(\cos(c_1Nx) + \cos(c_2Nx) \big).$
Then, we have $\| (\phi, \psi)\|_{H^s \times H^s} = \|\psi\|_{H^s} \sim 1$.
A direct computation shows
$ \psi_1 (x, t)  = N^{-s} \big(\cos(c_1Nx + \al (c_1N)^3t) + \cos(c_2Nx + \al (c_2N)^3t) \big)$
and thus
\begin{align*}
S(t - t') \dx \big(\psi_1^2\big) (x, t') 
& = -N^{-2s + 1} 
\Big[  \sin \big(N x + N^3 t + (\al c_1^3  + \al c_2^3  - 1)N^3 t' \big) \\
& +c_1 \sin \big( 2 c_1 N x   + 8 (c_1N)^3 t + 2 (\al - 4 ) (c_1N)^3 t' \big)  \\
&+ c_2 \sin \big( 2 c_2 N x  + 8 (c_2N)^3 t + 2 (\al - 4 ) (c_2N)^3 t' \big)  \\
&+ c_3 \sin \big( c_3 Nx + (c_3 N)^3t + (\al c_1^3  - \al c_2^3 - c_3^3 )N^3t' \big)  \Big],
\end{align*}

\noindent
where $c_3 = c_1 - c_2$.
From \eqref{resonance11}, we see that the coefficient of $t'$ in the first term is zero for any $N$. 
Also, note that $\al - 4 < 0$ and 
$(\al c_1^3  - \al c_2^3 - c_3^3 ) N^3
 = O(N^3)$
 for $ 0 < \al < 1$.
Thus, we have
$\phi_2 (x, t)
=  t N^{-2s+1}  \sin ( Nx + N^3t) + O( N^{-2s -2}  )$.
Hence if we assume the solution map $\Phi_t$ is $C^2$, then from \eqref{C^ksmooth},
we have
\[ N^{-s+1} \sim  \| \phi_2 (\cdot, t)\|_{H^s_x}  \leq \| (\phi_2, \psi_2)(\cdot, t) \|_{H^s_x \times H^s_x} \lesssim \| (\phi, \psi) \|^2_{H^s \times H^s} \sim 1\]

\noindent
for any $N$ satisfying $c_1 N \in \mathbb{N}$.
Hence, we must have $s \geq 1$, if the solution map $\Phi_t$ is $C^2$.

\noindent
$\bullet$ {\bf Case (2):} $c_1 \in \mathbb{R} \setminus \mathbb{Q}$

Given $N \in \mathbb{N}$,  let $\phi \equiv 0$ and  
$\psi(x) = N^{-s} (\cos([c_1N]x) + \cos ( [c_2N]x ) )$,
where $[x] =$ the closest integer to $x$ as in the proof of Proposition  \ref{PROP:counterexample1}.
From the computation in Case (1) replacing $c_jN$ with $[c_j N]$, we have
\begin{align} \label{phi2}
\phi_2 (x, t) 
& =  N^{-2s+1} \frac{ \cos ( Nx + \al [c_1N ]^3 t + \al [c_2N ]^3 t) 
- \cos ( Nx  +N^3t) }{\al [c_1N ]^3  + \al [c_2N ]^3 - N^3}
+ O( N^{-2s -2}  ).
\end{align}

If $\nu_{c_1} > 1$, then
it follows from \eqref{resres11} that for there are infinitely many $N$ such that
$\big| \al [c_1N ]^3  + \al [c_2N ]^3 - N^3 \big| \ll 1.$
Then, by Mean Value Theorem, 
we have
$\phi_2 (x, t) \sim - t N^{-2s+1} \sin (Nx + N^3t ) +  O( N^{-2s -2}  ),$
and hence, we have $\| \phi_2(\cdot, t)\|_{H^s_x} \sim N^{-s+1}$ for infinitely many $N$.
This implies $s \geq 1$, if $\nu_{c_1} > 1$ and the solution map $\Phi_t$ is $C^2$.

On the other hand, it follows from \eqref{resres111} and \eqref{phi2} that for any $\eps >0$, we have
$\| \phi_2(\cdot, t)\|_{H^s_x} \lesssim N^{-s +\nu_{c_1} + \eps}$
for all sufficiently large $N \in \mathbb{N}$.
Thus, if $\nu_{c_1} \leq 1$, then we can not deduce any condition on $s$ (especially when $s \geq \nu_{c_1}$)
even if we assume that the solution map $\Phi_t$ is $C^2$.

Now, assume that $\nu_{c_1} \leq 1$ and that the solution map $\Phi_t$ is $C^3$.
Since $\phi \equiv 0$, we have $\phi_1 \equiv 0$ and $\psi_2 \equiv 0$.
Thus, we have 
$\psi_3 (t) = 3 \int_0^t S_\al (t - t') \dx \big(\phi_2 \psi_1\big) (t') dt'.$
From \eqref{phi2}, we have
\begin{align} \label{phi2psi1}
 \phi_2 \psi_1(x, t) & = 
\frac{N^{-3s+1}}{ 2(\al [c_1N ]^3 t + \al [c_2N ]^3 - N^3)} 
 \times \Big[
\sum_{j = 1}^2 \cos ( [c_j N]x + \al [c_jN ]^3 t) \notag \\
& \hphantom{XX} + \text{ 6 more terms with cosines } \Big] 
+ \text{ lower order terms}.
\end{align}

\noindent
Then, from \eqref{resres11}, it follows  that for any $\eps > 0$, there are infinitely many $N$ such that
\begin{align} \label{psi3}
\psi_3 (x, t) 
&\sim \frac{t N^{-3s+2}} {\al [c_1N ]^3  + \al [c_2N ]^3 -N^3}  
\sum_{j = 1}^2 \sin ( [c_j N]x + \al [c_jN ]^3 t)  
+ \text{lower order terms }\notag \\
& \gtrsim t N^{-3s+1 + \nu_{c_1} - \eps} 
\sum_{j = 1}^2 \sin ( [c_j N]x + \al [c_jN ]^3 t)  . 
\end{align}

\noindent
Hence, we have
$\| \psi_3(\cdot, t) \|_{H^s_x} \sim N^{-2s+1 + \nu_{c_1} - \eps}. $
By letting $N \to \infty$, this implies that $-2s+1 + \nu_{c_1} - \eps \leq 0$ for any $\eps > 0$, if the solution map $\Phi_t$ is $C^3$.
Hence,  if the solution map is $C^3$, then we must have $s \geq \frac{1}{2} + \frac{1}{2}\nu_{c_1}$.
We point out that a careful examination of \eqref{phi2psi1} and \eqref{psi3} shows 
$\| \psi_3(\cdot, t) \|_{H^s_x} \lesssim N^{-2s+2}. $
i.e. this part of the argument is only for 
 $\nu_{c_1} \leq 1$.
\end{proof}

\begin{proof}[Proof of Theorem \ref{THM:illposedonR}]

Let $\phi \equiv 0$ and
\[\psi (x) = 2\g^{-\frac{1}{2}}N^{-s} \Big( \cos(c_1 N x) \int_{-\g}^\g e^{i \xi x} d\xi +
+ \cos(c_2 N x) \int_{-\g}^\g e^{i \xi x} d\xi \Big),\]
where $\g = \eps N^{-{2}}$ and we  choose $\eps$ later. Then, 
 $\| (\phi, \psi) \|_{H^s \times H^s} = \| \psi \|_{H^s} \sim 1$. 
Then, \[\psi_1 = S_\al(t) \psi = \g^{-\frac{1}{2}}N^{-s} \Big( \intt_{|\xi \pm c_1N| <\g} e^{i (\xi x+ \al \xi^3t)} d\xi +
\intt_{|\xi \pm c_2N| <\g} e^{i (\xi x+ \al \xi^3t)} d\xi 
\Big).\]

\noindent
Let $ A = \{ (\xi_1, \xi_2) \in \mathbb{R}^2 : |\xi_1 \pm c_1N| <\g,  \ |\xi_2 \pm c_2N| <\g\}$ and $\xi = \xi_1 + \xi_2$.
Then, we have
\[ \phi_2(x, t) 
 = \g^{-1}N^{-2s}  \iint_A i \xi e^{i\xi x} e^{i\xi^3t}\frac{e^{i(\al \xi_1^3 + \al \xi_2^3 - \xi^3) t} - 1}
{\al \xi_1^3 + \al \xi_2^3 - \xi^3}d\xi_1 d\xi_2.\]

\noindent
Note that $\| \phi_2(\cdot, t)\|_{H^s_x} = \big\| \jb{\xi}^s |\ft{\phi_2}(\xi, t)| \big\|_{L^2_\xi} 
\geq \big\| \jb{\xi}^s |\ft{\phi_2}(\xi, t)| \chi_{|\xi | = N + O(\g)} \big\|_{L^2_\xi} $.
Thus, by restricting our attention to $\{ \xi \in \mathbb{R} : \xi = N + O(\g) \}$ in 
\begin{align*}
\ft{\phi_2}(\xi, t) 
= \g^{-1}N^{-2s} i \xi  \, e^{i\xi^3t} 
\intt_{\substack{ \{\xi = \xi_1 + \xi_2\}\cap A}} \frac{e^{i(\al \xi_1^3 + \al \xi_2^3 - \xi^3) t} - 1}
{\al \xi_1^3 + \al \xi_2^3 - \xi^3}d\xi_1,
\end{align*}

\noindent
we see that  the only contribution to $\{ \xi \in \mathbb{R} : \xi = N + O(\g) \}$
comes from $B = \{ \xi = \xi_1 + \xi_2, \ |\xi_1 - c_1N| <\g$ and $|\xi_2 - c_2N| <\g\} $ since $c_1 + c_2 = 1$.
Note that for sufficiently large $N$, we have
\[\begin{cases}|\xi_j^3 - (c_j N)^3| = |(\xi_j - c_jN)(\xi_j^2 + \xi_j c_jN + (c_j N)^2| \lesssim \g N^2  \sim \eps \\
|\xi^3 - N^3| = |(\xi - N) (\xi^2 + \xi N + N^2)| \lesssim \g N^2  \sim \eps 
\end{cases}\]

\noindent
on $B$. Then, using $\al (c_1N)^3 + \al (c_2N)^3 - N^3 = 0$, we have
$ |\al \xi_1^3 +\al \xi_2^3 - \xi^3| 
= |\al (\xi_1^3 - (c_1N)^3) + \al (\xi_2^3 - (c_2N)^3) - (\xi^3 - N^3)| \lesssim \eps.$
Since $\lim_{\theta \to 0} \frac{e^{i\theta t} - 1}{\theta } = it$, 
we have
 $\text{Im} \frac{e^{i\theta t} - 1}{\theta } \geq \frac{t}{2}$ for sufficiently small  $\theta $.
Now, by choosing $\eps$ small such that the above inequality holds with $|\theta| = |\al \xi_1^3 +\al \xi_2^3 - \xi^3| \lesssim \eps$, we have
\begin{align*}
\big|\ft{\phi_2}(\xi, t) \chi_{|\xi | = N + O(\g)} \big| 
\sim \g^{-1}N^{-2s+1}    \bigg|\intt_{B} \frac{e^{i(\al \xi_1^3 + \al \xi_2^3 - \xi^3) t} - 1}
{\al \xi_1^3 + \al \xi_2^3 - \xi^3}d\xi_1\bigg|
\geq  \g^{-1}N^{-2s+1}   \frac{t}{2}\int_B 1 \, d \xi_1 .
\end{align*}

\noindent
From Lemma \ref{rectangles}, we have
$\int_B 1 \, d \xi_1 = \chi_{|\ \cdot\ - c_1 N| < \g}*\chi_{|\ \cdot\ - c_2 N| < \g}(\xi ) 
\geq \tfrac{1}{2}  \g \chi_{|\xi - N | < \g}(\xi). $
Hence, we have
$ \| \phi_2( \cdot, t) \|_{H^s_x} 
\gtrsim t N^{-s + 1} \| \chi_{|\xi - N | < \g}(\xi) \|_{L^2_\xi} \sim t \g^\frac{1}{2}N^{-s + 1}
\sim N^{-s}. $
Therefore, 
it follows from \eqref{C^ksmooth} that we have $s\geq 0$ if the solution map $\Phi_t$ is $C^2$.
\end{proof}

\section{APPENDIX: Local Well-Posedness Result on $\T_\ld$ for $\al= 1$ without the Mean 0 Assumption}

Assuming the mean 0 condition for $u$ and $v$, 
the bilinear estimate \eqref{KPVZbilinear} for KdV on $\mathbb{T}_\ld$ along with the standard computation 
establishes the local well-posedness of the Majda-Biello system \eqref{MB} in $H^s(\mathbb{T}_\ld) \times H^s(\mathbb{T}_\ld)$ for $s \geq -\frac{1}{2}.$
In this appendix, we  establish the same result without the mean assumption on $u$ and $v$.

If the spatial means of $u$ and $v$ are not zero, we consider $u \mapsto u - (2\pi\ld)^{-1}\ft{u_0}(0)$
and $v \mapsto v - (2\pi\ld)^{-1} \ft{v_0}(0)$ along with the conservation $E_1$ and $E_2$ of the means of $u$ and $v$.
This modifies the  Majda-Biello system into the mean 0 system:
\begin{equation} \label{mean0MB}
\begin{cases}
u_t + u_{xxx} + q v_x + v v_x = 0 \\
v_t + v_{xxx} + q u_x + pv_x + (u v)_x = 0, 
\end{cases}
\end{equation}

\noindent
where $ p $ and $q$ are the spatial means of the original $u$ and $v$.
Now, consider the linear part of \eqref{mean0MB}:
\begin{equation} \label{mean0linear}
\bigg( \dt  + \dx^3  + \begin{pmatrix} 0 & q \\ q& p \end{pmatrix} \bigg) 
\begin{pmatrix} u \\ v \end{pmatrix} = 0.
\end{equation}

\noindent
When $q \ne 0$, the linear terms are {\it mixed}.
In this case, it does not make sense to consider the solution space as a product of the scalar $X^{s, b} $ spaces.
By taking the space-time Fourier transform of  \eqref{mean0linear}, we see that
the Fourier transforms of free solutions are ``supported on'' $\tau I - A(\xi)$, 
where $I $ is the $2\times 2$ identity matrix and 
$A(\xi ) = \Big(\begin{smallmatrix} \xi^3 & -q \xi \\ -q \xi & \xi^3 - p \xi \end{smallmatrix} \Big).$
Since $A(\xi) $ is self-adjoint, it is diagonalizable via an orthogonal matrix $M(\xi)$
(with $M(0) := I$.)
i.e.
we have $A(\xi) = M(\xi)D(\xi)M^{-1}(\xi) $, where 
$D(\xi) =  \Big(\begin{smallmatrix} d_1(\xi) & 0 \\ 0 & d_2(\xi) \end{smallmatrix} \Big)$  
and $ d_1(\xi) $, $d_2(\xi)$ are the eigenvalues of $A(\xi)$ given by
\begin{equation} \label{XEIGENVALUE}
d_j(\xi) = \xi^3 -\tfrac{p \xi}{2} + (-1)^j L\xi, \ j = 1, 2, 
\end{equation}

\noindent
with $L := L(p, q) = \frac{1}{2}\sqrt{ p^2 + 4q^2}$.
Then, we define the {\it vector-valued} $X^{s, b}$ space as follows:

\begin{definition}
Define $X^{s, b}_{p, q} (\mathbb{T}_\ld \times \mathbb{R}) = \big\{ (u, v) \in \mathcal{S}'  : \big\|(u, v) \big\|_{X^{s, b}_{p, q}} < \infty \big\}$,
via the norm
\begin{align*}
\|(u, v) \|_{X^{s, b}_{p, q}(\mathbb{T}_\ld \times \mathbb{R})} & = \left\| \jb{\xi}^s \big( I + |\tau - A(\xi)|\big)^b 
\left( \begin{smallmatrix}  \ft{u}(\xi, \tau) \\ \ft{v}(\xi, \tau) \end{smallmatrix} \right) \right\|_{L^2_{\xi, \tau}(\mathbb{Z}/\ld \times \mathbb{R})} \\
& = \bigg( \frac{1}{2\pi \ld} \int \sum_{\xi \in \mathbb{Z}/\ld} 
\Big[ \jb{\xi}^{2s} \big( I + |\tau - A(\xi)|\big)^{2b} 
\left( \begin{smallmatrix}  \ft{u}(\xi, \tau) \\ \ft{v}(\xi, \tau) \end{smallmatrix} \right),
\left( \begin{smallmatrix}  \ft{u}(\xi, \tau) \\ \ft{v}(\xi, \tau) \end{smallmatrix} \right) 
\Big]_{\mathbb{C}^2} d \tau \bigg)^{1/2},
\end{align*}

\noindent
where $ [\cdot, \cdot]_{\mathbb{C}^2}$ is the usual Euclidean inner product on $\mathbb{C}^2$.
\end{definition}

\noindent We  drop the subscripts $p$ and $q$ when there is no confusion.

\begin{remark} \label{XREMARK1} \rm
Since $\tau I - A(\xi)$ is self-adjoint, $\big(\tau I - A(\xi)\big)^2$ is a positive matrix, 
with a unique positive  square root. We define $|\tau I - A(\xi)|$ by such a unique square root. 
Then, $ I + |\tau I - A(\xi)| $ is also positive definite and we can define $\big(I + |\tau I - A(\xi)| \big)^{2b}$ by
$M(\xi) \big(I + |\tau I - D(\xi)| \big)^{2b} M^{-1}(\xi) $.
\end{remark}

\begin{remark} \label{XREMARK2} \rm
Note that the $X^{s, b}_{p, q}$ norm is {\it not} defined as a weighted $L^2$ norm of $|\ft{u}|$ and $|\ft{v}|$, 
unlike the scalar $X^{s, b}$ norm (which is a weighted $L^2$ norm of $|\ft{u}|$.)
Since $M(\xi)$ is an orthogonal matrix for all $\xi$, we have
$\left| M^{-1}(\xi) \left( \begin{smallmatrix}  \ft{u}(\xi, \tau) \\ \ft{v}(\xi, \tau) \end{smallmatrix} \right) \right|_{\mathbb{C}^2}
= \left| \left( \begin{smallmatrix}  \ft{u}(\xi, \tau) \\ \ft{v}(\xi, \tau) \end{smallmatrix} \right) \right|_{\mathbb{C}^2}$
for all $\xi \in \mathbb{Z}/\ld$ and $\tau \in \mathbb{R}$.
Thus we can take the inverse Fourier transform of 
$M^{-1}(\xi) \left( \begin{smallmatrix}  \ft{u}(\xi, \tau) \\ \ft{v}(\xi, \tau) \end{smallmatrix} \right) $. 
Then, by letting
\begin{equation} \label{XDIAGONAL}
 \left( \begin{smallmatrix}  \ft{U}(\xi, \tau) \\ \ft{V}(\xi, \tau) \end{smallmatrix} \right) =
M^{-1}(\xi) \left( \begin{smallmatrix}  \ft{u}(\xi, \tau) \\ \ft{v}(\xi, \tau) \end{smallmatrix} \right) ,
\end{equation}

\noindent 
we have
\begin{align*}
\|(u, v) & \|_{X^{s, b}_{p, q}(\mathbb{T}_\ld \times \mathbb{R})}
 =\bigg( \iint \jb{\xi}^{2s} \Big|\jb{\tau I - A(\xi) }^b \left( \begin{smallmatrix}  \ft{u}(\xi, \tau) \\ \ft{v}(\xi, \tau) \end{smallmatrix} \right) 
\Big|^2_{\mathbb{C}^2} d\xi^\ld d\tau\bigg)^\frac{1}{2} \\
& =\bigg( \iint \jb{\xi}^{2s} \Big| \jb{\tau I - D(\xi) }^b \left( \begin{smallmatrix}  \ft{U}(\xi, \tau) \\ \ft{V}(\xi, \tau) \end{smallmatrix} \right) 
\Big|^2_{\mathbb{C}^2} d\xi^\ld d\tau\bigg)^\frac{1}{2} 
 = \big( \| U \|^2_{X^{s, b}_1} + \| V\|^2_{X^{s, b}_2} \big)^{1/2}, 
\end{align*}

\noindent
where 
\begin{equation}\label{XXSB}
\| f \|_{X^{s, b}_j} = \big\| \jb{\xi}^s \jb{\tau - d_j (\xi) }^b \ft{f}(\xi, \tau)\big \|_{L^2_{\xi, \tau}}, \ j = 1, 2.
\end{equation}

\noindent
i.e. the $X^{s, b}_{p, q}$ norm 
 is defined as a weighted $L^2$ norm of the {\it diagonal} terms $|\ft{U}|$ and $|\ft{V}|$.
Hence, we can assume that $\ft{U}$ and $\ft{V}$ are nonnegative
in proving the estimates. 
\end{remark}

Using $X^{s, b}_{p, q}$ and other related function spaces, 
we establish the following LWP result.
In this appendix, we  only sketch the proof pointing out the difference from the scalar KdV case.
For details, see \cite{OHTHESIS}.

\begin{theorem} \label{THM:XLWP}
Let $\ld \geq 1$ and $p, q \in \mathbb{R}$.
The Cauchy problem \eqref{mean0MB} with mean 0 initial data $(u_0, v_0)$ is 
locally well-posed in $H^s(\mathbb{T}_\ld) \times H^s(\mathbb{T}_\ld)$ for $s \geq -\frac{1}{2}$. 
\end{theorem}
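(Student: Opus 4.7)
The plan is to carry out the standard Picard iteration in a vector-valued Bourgain space adapted to the diagonalized linear operator, reducing the main nonlinear estimate to a bilinear inequality for scalar $X^{s,b}$ spaces with slightly shifted dispersion symbols, so that the classical KdV bilinear estimate \eqref{KPVZbilinear} essentially transfers. First observe that $A(\xi) = \xi^3 I + \xi B$ with the constant symmetric matrix $B = \bigl(\begin{smallmatrix} 0 & -q \\ -q & -p \end{smallmatrix}\bigr)$; hence the eigenvectors of $A(\xi)$ are \emph{independent of $\xi$}, and $M(\xi) \equiv M$ is a fixed orthogonal matrix (with the convention $M(0) := I$, natural since $A(0) = 0$). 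Setting $(U, V)^T := M^{-1}(u, v)^T$, the linear parts of \eqref{mean0MB} decouple into two scalar dispersive equations with symbols
\[ d_j(\xi) = \xi^3 + c_j \xi, \qquad c_j := -\tfrac{p}{2} + (-1)^j L, \quad j = 1,2, \]
while the nonlinearity $\dx(v^2/2, uv)^T$ becomes a finite constant-coefficient linear combination of terms $\dx(W_1 W_2)$ with $W_1, W_2 \in \{U, V\}$. The mean-$0$ assumption transfers to $\ft{U}(0) = \ft{V}(0) = 0$, and by Remark \ref{XREMARK2} the $X^{s,b}_{p,q}$ norm equals $(\|U\|^2_{X^{s,b}_1} + \|V\|^2_{X^{s,b}_2})^{1/2}$, where $X^{s,b}_j$ is the scalar Bourgain space \eqref{XXSB} built on $d_j$.

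Next I would introduce the vector-valued companions $Y^s_{p,q}$ and $Z^s_{p,q}$ analogously; the homogeneous and Duhamel linear estimates then pass componentwise from the scalar ones recalled in Section 3, since each $d_j$ is a smooth real symbol. The core is the bilinear estimate
\[ \|\dx(W_1 W_2)\|_{Z^s_j(\T_\ld \times \R)} \lesssim \ld^{0+} \|W_1\|_{Y^s_{j_1}} \|W_2\|_{Y^s_{j_2}}, \qquad s \geq -\tfrac{1}{2}, \]
for all $j, j_1, j_2 \in \{1,2\}$ and all mean-$0$ $W_1, W_2$. The decisive point is the modified resonance identity: using $\xi = \xi_1 + \xi_2$,
\[ d_j(\xi) - d_{j_1}(\xi_1) - d_{j_2}(\xi_2) = 3\xi\xi_1\xi_2 + L\bigl[(-1)^j \xi - (-1)^{j_1}\xi_1 - (-1)^{j_2}\xi_2\bigr], \]
replacing \eqref{P1algebra}. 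The linear correction is $O(\max(|\xi|, |\xi_1|, |\xi_2|))$ and is dominated by $|3\xi\xi_1\xi_2|$ whenever $\min(|\xi_1|, |\xi_2|) \geq C(p,q,\ld)$; in that regime $\max\bigl(\jb{\tau - d_j(\xi)}, \jb{\tau_1 - d_{j_1}(\xi_1)}, \jb{\tau_2 - d_{j_2}(\xi_2)}\bigr) \gtrsim |\xi\xi_1\xi_2|$, and the Kenig-Ponce-Vega/CKSTT proof of \eqref{KPVZbilinear} transfers verbatim. In the complementary bounded-frequency regime the estimate is trivial by Cauchy-Schwarz and Lemma \ref{L^4strichartz}, since there are no derivatives to recover. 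Summing the finitely many combinations $(j, j_1, j_2)$ yields the vector-valued bound on the transformed nonlinearity, and Theorem \ref{THM:XLWP} follows by contraction in $Y^s_{p,q}$ exactly as in the proof of Theorem \ref{THM:YLWP}.

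The main obstacle will be the careful treatment of the \emph{mixed} cases $(j, j_1, j_2)$ where the correction $L[(-1)^j\xi - (-1)^{j_1}\xi_1 - (-1)^{j_2}\xi_2]$ does not vanish, i.e., $j_1 \neq j_2$ or $j \neq j_1$ (it does vanish when $j = j_1 = j_2$). These terms introduce explicit $p, q$-dependent constants into the resonance analysis and force the high/low-frequency splitting described above, which must be organized so as to preserve the $\tfrac{3}{2}$-derivative gain that underlies the sharp threshold $s = -\tfrac{1}{2}$. A secondary routine nuisance is to verify that the orthogonality of $M$, together with $\ft{U}(0) = \ft{V}(0) = 0$, really gives an isometric decomposition of the vector-valued spaces $Y^s_{p,q}$, $Z^s_{p,q}$ into the scalar components $Y^s_j$, $Z^s_j$ on the mean-$0$ subspace.
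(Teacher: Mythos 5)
Your overall strategy is the paper's: diagonalize the linear part, reduce the vector-valued $X^{s,b}_{p,q}$ norm to the scalar spaces $X^{s,b}_j$ built on the eigenvalues $d_j(\xi)=\xi^3-\tfrac{p\xi}{2}+(-1)^jL\xi$, and run the KPV/CKSTT argument on the modified resonance function $3\xi\xi_1\xi_2 + L\big[(-1)^{j}\xi-(-1)^{j_1}\xi_1-(-1)^{j_2}\xi_2\big]$. Your observation that $A(\xi)=\xi^3 I+\xi B$ with $B$ constant, so that $M$ can be taken independent of $\xi$, is correct and is a genuine (if cosmetic) simplification over the paper, which keeps $M(\xi)$ and only uses $|\mu_j(\xi)|\le 1$ to bound the coefficients $C_{j,k,l}$ by $3$.

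The gap is in your frequency dichotomy. You claim the resonance lower bound $\MAX\gtrsim|\xi\xi_1\xi_2|$ holds once $\min(|\xi_1|,|\xi_2|)\ge C(p,q,\ld)$, and that the complementary regime is ``trivial by Cauchy--Schwarz and Lemma \ref{L^4strichartz}, since there are no derivatives to recover.'' The complementary regime is \emph{not} a bounded-frequency regime: it contains the high--high--low interactions $|\xi_2|\sim 1/\ld$, $|\xi_1|\sim|\xi|\sim N\to\infty$, where at $s=-\tfrac12$ the multiplier $|\xi|\jb{\xi}^{s}\jb{\xi_1}^{-s}\jb{\xi_2}^{-s}\sim N$ is large and the full $\tfrac32$-derivative gain is still needed. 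Worse, this is exactly where the mixed correction is dangerous: when the correction equals $\pm 2L\xi$, the resonance function factors as $|\xi|\,|3\xi_1\xi_2\pm 2L|$, and one must rule out near-cancellation of $3\xi_1\xi_2$ against $2L$ for a small-times-large product $\xi_1\xi_2$. The paper's Case (3) handles precisely this: using that nonzero frequencies in $\Z/\ld$ satisfy $|\xi_j|\ge 1/\ld$, that the large frequencies exceed $\max(L_1/\ld,1)$, and that $L=L_1/\ld^2$ under the scaling, one gets $|\xi_a\xi_b|\ge L$ for every relevant pair and hence $|3\xi_a\xi_b\pm 2L|\ge|\xi_a\xi_b|$. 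Your write-up, as organized, leaves this regime uncovered, and it is the regime that determines the $s=-\tfrac12$ threshold. A secondary omission: the $L^2_\xi L^1_\tau$ part of the $Z^s_{p,q}$ norm does not transfer ``verbatim'' from \cite{CKSTT4}; it requires a measure estimate for the level sets of the \emph{modified} resonance function (the paper's Lemma \ref{XCLOSETOCURVE} on $\Omega_{j,k,l}(\xi)$), which you should at least state and check survives the linear correction $L[\cdots]$.
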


\noindent
Then, we obtain 
Theorem \ref{MTHM:mean0LWP} as a corollary.
As in the scalar case, 
the proof of Theorem \ref{THM:XLWP} is based on a contraction argument in $Y^s_{p,q} \subset C_t H^s_x $,
where 
\[\|(u, v) \|_{Y^s_{p,q}} = \| (u, v) \|_{X^{s,\frac{1}{2}}_{p, q}} + \| \jb{\xi}^{s}(\ft{u}, \ft{v}) (\xi, \tau)\|_{L^2(d\xi^\ld, L^1_{\tau})}.\] 

\noindent
Let $S(t)$ be the linear semigroup for the linear system \eqref{mean0linear}.  i.e. $S(t)$ is defined via
$\big( S(t) (u_0, v_0)^T   \big)^{\wedge} (\xi) 
= e^{itA(\xi)} (\ft{u_0}(\xi), \ft{v_0}(\xi))^T $,
where $T$ denotes the transpose.
As in \cite{BO1}, the Duhamel term 
$ \int_0^tS(t - t') \big(f(t'), g(t')\big)^T dt' $ can be written as
\begin{align*}
\int_0^t S(t -t') \col{f(x, t')}{g(x, t')} dt 
& = -i \iint e^{ix \xi} M(\xi) 
\col{(\tau - d_1(\xi))^{-1}
(e^{it \tau } - e^{itd_1(\xi)})\ft{F}(\xi, \tau)}
{ (\tau - d_2(\xi))^{-1}(e^{it \tau } - e^{itd_2(\xi)})\ft{G}(\xi, \tau)} d\tau d\xi^\ld  \\
& = -i \iint e^{ix \xi} ( \tau I - A(\xi) )^{-1} (e^{i t \tau } I - e^{itA(\xi)})
\col{\ft{f}(\xi, \tau)}{ \ft{g}(\xi, \tau)} d\tau d\xi^\ld,  
\end{align*}

\noindent
where $(\ft{F}, \ft{G})^T = M^{-1} (\ft{f}, \ft{g})^T$.
This computation leads us to define $ \| (u, v) \|_{Z^s_{p, q}} $ via
\[ \| (u, v) \|_{Z_{p, q}^s} = \| (u, v) \|_{X_{p, q}^{s,-\frac{1}{2}}} + \left\| \jb{\xi}^s \jb{ \tau I -A(\xi) } ^{-1}  
(u, v)^T \right\|_{L^2(d\xi^\ld, L^1_{\tau} ) }. \]

Here are some basic properties of $X^{s,b}_{p,q}$,  $Y^s_{p, q}$, and $Z_{p, q}^s$.
Their proofs are straightforward modifications from the scalar case. 
However, we always need to reduce the estimates for $(u, v)$ to those for the diagonal terms $(U, V)$ given by \eqref{XDIAGONAL}.
For details, see \cite{OHTHESIS}.

\begin{lemma}[Linear Estimates] \label{XFREESOLN}
\[ \begin{cases}
 \| \eta(t) S(t) (u_0,v_0)^T \|_{Y^s_{p, q}} \lesssim \|(u_0, v_0)\|_{H^s \times H^s} \\
 \big\|  \eta(t) \int_0^t S(t -t') (f(t'),g(t'))^T dt \big\|_{Y^{s}_{p, q}}  \lesssim  \| (f, g) \|_{Z_{p, q}^s}. 
\end{cases}\]
\end{lemma}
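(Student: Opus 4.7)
The plan is to diagonalize the system via the orthogonal matrix $M(\xi)$ and thereby reduce both linear estimates to the scalar estimates of Bourgain \cite{BO1} and Colliander-Keel-Staffilani-Takaoka-Tao \cite{CKSTT4} applied to two separate equations with dispersion relations $d_1(\xi)$ and $d_2(\xi)$ from \eqref{XEIGENVALUE}.

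First, at the Fourier level, I would define the diagonal initial data and forcing by $(\ft{U_0}, \ft{V_0})^T = M^{-1}(\xi) (\ft{u_0}, \ft{v_0})^T$ and $(\ft{F}, \ft{G})^T = M^{-1}(\xi) (\ft{f}, \ft{g})^T$. Since $M(\xi)$ is orthogonal for every $\xi$, we have $|(\ft{U_0}, \ft{V_0})|_{\mathbb{C}^2} = |(\ft{u_0}, \ft{v_0})|_{\mathbb{C}^2}$ pointwise in $\xi$, so $\| (u_0, v_0) \|_{H^s \times H^s} = \| (U_0, V_0) \|_{H^s \times H^s}$. By Remark \ref{XREMARK2}, the $X^{s,b}_{p,q}$ norm likewise decouples as $\|(u,v)\|_{X^{s,b}_{p,q}}^2 = \|U\|_{X^{s,b}_1}^2 + \|V\|_{X^{s,b}_2}^2$ with $X^{s,b}_j$ defined in \eqref{XXSB}. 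The same diagonalization, applied to $\jb{\tau I - A(\xi)}^{-1}$ through the identity $\jb{\tau I - A(\xi)}^{-1} = M(\xi) \jb{\tau I - D(\xi)}^{-1} M^{-1}(\xi)$ and the pointwise orthogonality of $M(\xi)$ in $\xi$, will convert the $L^2(d\xi^\ld; L^1_\tau)$ component of the $Y^s_{p,q}$ and $Z^s_{p,q}$ norms into the analogous scalar quantities $\| \jb{\xi}^s \ft{U}(\xi, \tau)\|_{L^2_\xi L^1_\tau} + \| \jb{\xi}^s \ft{V}(\xi, \tau)\|_{L^2_\xi L^1_\tau}$ and similarly for $Z^s_{p,q}$ with the weights $\jb{\tau - d_j(\xi)}^{-1}$.

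Next, since $S(t) = M(\xi) \, \mathrm{diag}(e^{itd_1(\xi)}, e^{itd_2(\xi)}) \, M^{-1}(\xi)$ on the Fourier side, the free evolution becomes $\ft{U}(\xi, t) = e^{itd_1(\xi)} \ft{U_0}(\xi)$ and $\ft{V}(\xi, t) = e^{itd_2(\xi)} \ft{V_0}(\xi)$, which are free solutions of the scalar linear equations with symbols $d_j(\xi)$. Because $d_j(\xi) = \xi^3 + O(\xi)$ is a cubic perturbation, the Bourgain-type linear estimates of \cite{BO1}, \cite{CKSTT4}, \cite{KPV6}, giving $\|\eta(t) e^{itd_j(\dx/i)} \phi\|_{Y^s_j} \lesssim \|\phi\|_{H^s}$, apply verbatim (the lower-order terms in $d_j$ do not affect the proofs, which use only translation invariance of the weight). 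For the Duhamel term, the computation already displayed in the excerpt writes $\int_0^t S(t-t') (f,g)^T dt'$ in a form that, after conjugation by $M$, is precisely the diagonal pair of scalar Duhamel terms driven by $F$ and $G$, whence the scalar Duhamel estimate $\| \eta(t) \int_0^t e^{i(t-t')d_j(\dx/i)} F_j(t')\,dt'\|_{Y^s_j} \lesssim \|F_j\|_{Z^s_j}$ applies componentwise.

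The only real subtlety, and the step I would write out most carefully, is the passage through the $L^2_\xi L^1_\tau$ piece of the norms: the off-diagonal structure of $M(\xi)$ means the full $\ft{u}(\xi,\tau)$ equals an $\xi$-dependent linear combination of $\ft{U}(\xi,\tau)$ and $\ft{V}(\xi,\tau)$, but because this combination is unitary in $\mathbb{C}^2$ and acts only in the $(u,v)$ coordinates (not mixing $\tau$), we may estimate $|\ft{u}(\xi,\tau)| + |\ft{v}(\xi,\tau)| \lesssim |\ft{U}(\xi,\tau)| + |\ft{V}(\xi,\tau)|$ pointwise in $(\xi,\tau)$ and then take $L^1_\tau$ followed by $L^2_\xi$. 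The corresponding weight $\jb{\tau I - A(\xi)}^{-1}$ on the $Z^s_{p,q}$ side is bounded in operator norm by $\max_j \jb{\tau - d_j(\xi)}^{-1}$, which likewise reduces the $L^2_\xi L^1_\tau$ term to its diagonal scalar analogue. Once this reduction is in place, summing the two scalar estimates yields both inequalities of the lemma, and the implicit constants depend only on $p$, $q$, and $\ld$ through $L = L(p,q)$.
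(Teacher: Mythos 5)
Your proposal is correct and follows exactly the route the paper indicates: diagonalize via the orthogonal matrix $M(\xi)$, reduce to the diagonal components $(U,V)$ using the pointwise unitarity of $M(\xi)$ in the $(u,v)$ coordinates, and apply the scalar $X^{s,b}$ linear estimates to the phases $d_1(\xi)$ and $d_2(\xi)$ (the paper only sketches this, deferring details to the thesis, but explicitly states that the proof is a "straightforward modification from the scalar case" via reduction to the diagonal terms). Your added care with the $L^2_\xi L^1_\tau$ piece and the operator bound on $\jb{\tau I - A(\xi)}^{-1}$ is exactly the right point to spell out.
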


\noindent
The following lemmata are for the $X_j^{s, b}$ spaces defined in   \eqref{XXSB}.

\begin{lemma} \label{XEMBED1}
Let $f(x, t)$ be a function on $ \mathbb{T}_\ld \times \mathbb{R} $.
Then, we have 
\begin{equation*} 
 \| f \|_{L^4_t L^2_x} \lesssim \| f \|_{ X_j^{0, \frac{1}{4}}},  \ j = 1, 2.
\end{equation*}
\end{lemma}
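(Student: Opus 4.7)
The plan is to reduce the estimate to a one-dimensional Sobolev embedding after applying Plancherel in the spatial variable, Minkowski's inequality in time, and an appropriate modulation that sweeps away the phase $d_j(\xi)$.

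First, I would apply Plancherel on $\T_\ld$ at each fixed $t$ to write
\[
\|f(\cdot, t)\|_{L^2_x(\T_\ld)} \sim \|\ft{f}(\xi, t)\|_{L^2(d\xi^\ld)},
\]
so that $\|f\|_{L^4_t L^2_x} \sim \|\,\|\ft{f}(\xi, t)\|_{L^2(d\xi^\ld)}\,\|_{L^4_t}$. Since $4\geq 2$, Minkowski's inequality then allows me to swap the order of the norms, yielding the upper bound
$\|\,\|\ft{f}(\xi, \cdot)\|_{L^4_t}\,\|_{L^2(d\xi^\ld)}$. This reduces matters to controlling, for each fixed $\xi$, the $L^4_t$ norm of the scalar function $t \mapsto \ft{f}(\xi, t)$.

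Next, for each fixed $\xi \in \Z/\ld$, I would modulate by a unimodular factor, setting $g(\xi, t) := e^{-itd_j(\xi)}\ft{f}(\xi, t)$. Since $|e^{-itd_j(\xi)}| = 1$, one has $\|g(\xi, \cdot)\|_{L^4_t} = \|\ft{f}(\xi, \cdot)\|_{L^4_t}$, while taking a temporal Fourier transform yields $\ft{g}(\xi, \tau) = \ft{f}(\xi, \tau + d_j(\xi))$, and hence after a change of variables
\[
\|g(\xi, \cdot)\|_{H^{1/4}_t(\R)} = \|\jb{\tau - d_j(\xi)}^{1/4}\ft{f}(\xi, \tau)\|_{L^2_\tau(\R)}.
\]
The one-dimensional Sobolev embedding $H^{1/4}(\R) \hookrightarrow L^4(\R)$ (sharp endpoint of the fractional Sobolev embedding with $\frac{1}{2} - \frac{1}{4} = \frac{1}{4}$) then gives
\[
\|\ft{f}(\xi, \cdot)\|_{L^4_t} \lesssim \|\jb{\tau - d_j(\xi)}^{1/4}\ft{f}(\xi, \tau)\|_{L^2_\tau}.
\]
Squaring in $\xi$ and integrating against $d\xi^\ld$ then produces exactly $\|f\|_{X_j^{0, 1/4}}$ on the right-hand side.

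Since the only role played by the phase $d_j(\xi)$ is through the modulation $e^{-itd_j(\xi)}$, the argument is uniform in $j = 1, 2$ and in the parameters $p, q$ that determine $d_j$ via \eqref{XEIGENVALUE}, with the implicit constant coming purely from the Sobolev embedding and Plancherel. I do not anticipate any obstacle: the discreteness of the spatial Fourier variable on $\mathbb{Z}/\ld$ is irrelevant beyond using the correct normalized counting measure, and the periodic/non-periodic distinction plays no role since the temporal Sobolev embedding is applied only at fixed $\xi$.
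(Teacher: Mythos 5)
Your proof is correct and is exactly the standard argument behind this embedding (Plancherel in $x$, Minkowski since $4\geq 2$, then the modulation $e^{-itd_j(\xi)}$ reducing matters to the one-dimensional Sobolev embedding $H^{1/4}_t(\R)\hookrightarrow L^4_t(\R)$ at each fixed $\xi$); the paper does not reproduce a proof but cites the references where this is precisely the argument used for the scalar case, and your observation that the phase $d_j(\xi)$ enters only through a unimodular factor is the whole content of the "straightforward modification" to the vector-valued setting.
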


\begin{lemma} \label{XEMBED2}
Let $\ld \geq 1 $ and $\g = \max(C/\ld, 1) $.
Let $f(x, t)$ be a function on $ \mathbb{T}_\ld \times \mathbb{R} $
such that $\supp \ft{f}(\xi, t) \subset [1/\ld, \g] \text{ for all } t \in \mathbb{R}$.
Then, we have
\begin{equation*}
\| f \|_{L^4_t L^\infty_x} \lesssim \ld^{0+} \big\| | \dx |^\frac{1}{2} f \big\|_{X_j^{0, \frac{1}{4}}}, 
\text{ and }
\| f \|_{L^2_t L^\infty_x} \lesssim \ld^{0+} \big\| | \dx |^\frac{1}{2} f \big\|_{ L^2_{x, t}}, \ j = 1, 2. 
\end{equation*}
\end{lemma}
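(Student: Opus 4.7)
The plan is to reduce both inequalities to the scalar analogs already recorded as Lemmata \ref{embed2} and \ref{embed3}, by exploiting the fact that the new phase function $d_j(\xi)$ defined in \eqref{XEIGENVALUE} differs from $\xi^3$ only by a \emph{linear} correction. Writing $d_j(\xi) = \xi^3 + c_j \xi$ with $c_j = -\tfrac{p}{2} + (-1)^j L$, one can absorb the extra term $c_j\xi$ into the time variable by a Galilean-type change of frame.

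More precisely, the first step is to introduce the transformed function
\[
g(x,t) := f(x - c_j t, \, t), \qquad (x,t) \in \mathbb{T}_\ld \times \mathbb{R}.
\]
A direct computation of the space-time Fourier transform shows
$\ft{g}(\xi,\tau) = \ft{f}(\xi, \tau + c_j \xi)$, so by the change of variable $\tau \mapsto \tau + c_j\xi$ we obtain
\[
\|g\|_{X^{0,\frac{1}{4}}(\mathbb{T}_\ld\times\mathbb{R})} = \|f\|_{X_j^{0,\frac{1}{4}}(\mathbb{T}_\ld\times\mathbb{R})}.
\]
Moreover, the spatial shift preserves the pointwise-in-$t$ spatial Fourier support, so $\supp \ft{g}(\xi,t) \subset [1/\ld, \g]$ whenever $\supp \ft{f}(\xi,t) \subset [1/\ld, \g]$; it preserves $L^q_x$ norms for every fixed $t$, so $\|g\|_{L^4_t L^\infty_x} = \|f\|_{L^4_t L^\infty_x}$; and $|\dx|^{\frac{1}{2}}$ commutes with it, giving $\||\dx|^{\frac{1}{2}} g\|_{X^{0,\frac{1}{4}}} = \||\dx|^{\frac{1}{2}} f\|_{X_j^{0,\frac{1}{4}}}$. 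Applying the scalar Strichartz-type estimate of Lemma \ref{embed2} to $g$ then yields the first inequality.

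The second inequality is even easier: the right-hand side $\||\dx|^{\frac{1}{2}} f\|_{L^2_{x,t}}$ is entirely independent of the linear dispersion symbol, so it is a direct application of Lemma \ref{embed3} to $f$ itself; no Galilean shift is needed. Alternatively, the same transform argument works and returns exactly Lemma \ref{embed3} applied to $g$.

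The main (and only) obstacle is verifying carefully that the Galilean change of frame is compatible with the periodic spatial setting and with the discrete counting measure $d\xi^\ld$ used in the $X^{s,b}_j$ norm; since $c_j \in \mathbb{R}$ is in general irrational while $\xi \in \mathbb{Z}/\ld$, one has to check that the substitution $\tau \mapsto \tau + c_j \xi$ is done for each fixed $\xi \in \mathbb{Z}/\ld$ separately (which is harmless because the temporal integration is over $\mathbb{R}$). Everything else is a transcription of the scalar proofs with the $X^{s,b}$ weight replaced by the $X_j^{s,b}$ weight, exactly as in the diagonalization discussion of Remark \ref{XREMARK2}.
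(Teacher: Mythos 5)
Your proof is correct. Note that the paper itself does not write out an argument for this lemma: it only records that the proofs of the appendix lemmata are ``straightforward modifications from the scalar case'' and defers the details to \cite{OHTHESIS}. Your Galilean reduction is a clean way to make that modification exact rather than merely analogous: since $d_j(\xi)-\xi^3 = c_j\xi$ with $c_j = -\tfrac{p}{2}+(-1)^jL$ is linear, the shear $g(x,t)=f(x-c_jt,\,t)$ is well defined on $\mathbb{T}_\ld\times\mathbb{R}$ (translation of a periodic function by an arbitrary real amount), gives $\ft{g}(\xi,\tau)=\ft{f}(\xi,\tau+c_j\xi)$, hence $\|g\|_{X^{0,\frac14}}=\|f\|_{X_j^{0,\frac14}}$ by the substitution $\tau\mapsto\tau+c_j\xi$ performed fibrewise in $\xi\in\mathbb{Z}/\ld$, and preserves the frequency support, the $L^4_tL^\infty_x$ norm, and the action of $|\dx|^{\frac12}$; so Lemma \ref{embed2} applied to $g$ yields the first estimate with literally the same constant, and the second estimate is Lemma \ref{embed3} verbatim since its right-hand side carries no dispersive weight. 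The alternative the paper presumably intends --- rerunning the scalar proof with $\jb{\tau-\xi^3}$ replaced by $\jb{\tau-d_j(\xi)}$ --- also works, because that proof only uses that the phase is a real-valued function of $\xi$ (the level sets $\{\tau: |\tau-h(\xi)|\sim L\}$ have measure $\sim L$ regardless of $h$); your route buys an exact isometric identity instead of a parallel computation, at the cost of the (correctly handled) check that the shear interacts harmlessly with the discrete frequency lattice.
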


Before discussing the important estimates, 
we'd like to discuss the scaling on the mean 0 system \eqref{mean0MB}
on $[0, 2 \pi \ld) \times \mathbb{R}$.
\eqref{mean0MB} was obtained by $u \to u - p$ and $v \to v - q$ from the Majda-Biello system \eqref{MB},
where $p$ and $q$ are the spatial means of the original $u$ and $v$ on $[0, 2\pi \ld)$, respectively.
Now, consider the scaling $\mathbb{T}_\ld = [0, 2\pi\ld) \mapsto \mathbb{T}_{\s\ld} = [0, 2\pi \s\ld)$ 
on  \eqref{MB} given by
\begin{equation*} 
u^\s(x, t) = \tfrac{1}{\s^2}u(\tfrac{t}{\s^3}, \tfrac{x}{\s}), 
\text{ and }
v^\s(x, t) = \tfrac{1}{\s^2}v(\tfrac{t}{\s^3}, \tfrac{x}{\s})
\end{equation*}

\noindent
Note that the scaling does {\it not} preserve the means of $u$ and $v$.
Rather, we have 
$p^\s =$ the mean of $u^\s  =  p / \s^2$,  and $q^\s =$ the mean of $v^\s = q / \s^2. $
Then, after scaling, we need to consider the following equation rather than \eqref{mean0MB}.
\begin{equation*}
\begin{cases}
 u^\s_t +  u^\s_{xxx} + q^\s v^\s_x + v^\s v^\s_x = 0\\
 v^\s_t +   v^\s_{xxx} + q^\s u^\s_x + p^\s  v^\s_x +  (u^\s v^\s)_x = 0 
\end{cases}
\end{equation*}

\noindent
on $[0, 2 \pi \s \ld) \times \mathbb{R}$,
where 
$p^\s = p / \s^2$ and $q^\s = q / \s^2.$
Hence, $p$ and $q$ in the definition of $X^{s, b}_{p, q}$, $Y^s_{p, q}$, and $Z^s_{p, q}$ need to be modified accordingly when we apply scaling.
i.e. we need to consider $X^{s, b}_{p^\s, q^\s}([0, 2\pi \s \ld) \times \mathbb{R})$ and so on.

Now, we state the $L^4$ Strichartz estimate for the vector-valued $X^{s, b}_{p, q}$ spaces, following Bourgain \cite{BO1}.
The proof is a straightforward modification of the argument in \cite{BO1}, and hence is omitted.
See \cite{OHTHESIS} for the full proof.

\begin{lemma} \label{LEM:XL4}
For a vector-valued function $(u, v) $ on $[0, 2\pi \ld ) \times \mathbb{R} $, 
\[ \| (u, v) \|_{L^4_{x, t}([0, 2\pi \ld ) \times \mathbb{R})} 
\lesssim \| (u, v) \|_{X^{0,\frac{1}{3}}_{p,q}([0, 2\pi \ld ) \times \mathbb{R})}. \] 
\noindent Moreover, the implicit constant $ C = C(\ld)$ is a non-increasing function of $\ld$. 

\end{lemma}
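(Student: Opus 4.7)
The plan is to reduce the vector-valued estimate to two applications of the scalar Bourgain $L^4$ Strichartz estimate (Lemma \ref{L^4strichartz}), via diagonalization followed by a Galilean change of variables. First I will pass to the diagonal components. Define $(\ft{U}, \ft{V})^T = M^{-1}(\xi) (\ft{u}, \ft{v})^T$ as in \eqref{XDIAGONAL}; by Remark \ref{XREMARK2},
\[ \|(u,v)\|_{X^{0,1/3}_{p,q}}^2 = \|U\|_{X^{0,1/3}_1}^2 + \|V\|_{X^{0,1/3}_2}^2. \]
Writing $\ft{u}(\xi,\tau) = M_{11}(\xi)\ft{U}(\xi,\tau) + M_{12}(\xi)\ft{V}(\xi,\tau)$, I split $u = u^{(1)} + u^{(2)}$ with $\widehat{u^{(j)}} = M_{1j}\ft{W_j}$ (where $W_1=U$, $W_2=V$), and analogously $v = v^{(1)} + v^{(2)}$. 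Since $M(\xi)$ is orthogonal, each entry satisfies $|M_{ij}(\xi)| \leq 1$ uniformly in $\xi$, so
\[ \|u^{(j)}\|_{X^{0,1/3}_j} \leq \|W_j\|_{X^{0,1/3}_j}, \qquad \|v^{(j)}\|_{X^{0,1/3}_j} \leq \|W_j\|_{X^{0,1/3}_j}. \]
Thus by the triangle inequality in $L^4_{x,t}$ (used componentwise via $|(u,v)|_{\mathbb{C}^2} \leq |u| + |v|$), it is enough to prove a scalar estimate $\|f\|_{L^4_{x,t}} \lesssim \|f\|_{X^{0,1/3}_j}$ for $j=1,2$.

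Next I invoke a Galilean transformation to reduce the modified weight $\jb{\tau - d_j(\xi)}$ to the standard KdV weight $\jb{\tau - \xi^3}$. By \eqref{XEIGENVALUE}, $d_j(\xi) = \xi^3 + a_j \xi$ with the constant $a_j = -p/2 + (-1)^j L$ independent of $\xi$. Given $f$ on $\T_\ld \times \R$, set $\wt{f}(x,t) = f(x - a_j t, t)$; a direct computation yields $\widehat{\wt{f}}(\xi,\tau) = \ft{f}(\xi, \tau - a_j\xi)$, so (after the change of variable $\tau' = \tau - a_j\xi$ in the defining integral) $\|\wt f\|_{X^{0,1/3}} = \|f\|_{X^{0,1/3}_j}$, while translation invariance gives $\|\wt f\|_{L^4_{x,t}} = \|f\|_{L^4_{x,t}}$. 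Bourgain's $L^4$ estimate (Lemma \ref{L^4strichartz}) then yields $\|f\|_{L^4_{x,t}} \lesssim \|f\|_{X^{0,1/3}_j}$ with implicit constant non-increasing in $\ld$.

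Combining the two steps gives
\[ \|(u,v)\|_{L^4_{x,t}} \lesssim \|U\|_{X^{0,1/3}_1} + \|V\|_{X^{0,1/3}_2} \lesssim \|(u,v)\|_{X^{0,1/3}_{p,q}}, \]
and the monotonicity of $C(\ld)$ in $\ld$ is inherited from Lemma \ref{L^4strichartz}, since neither the diagonalization nor the Galilean shift depends on $\ld$. The main subtle point is that $M(\xi)$ is $\xi$-dependent, so pointwise in spacetime $|u|^2 + |v|^2 \neq |U|^2 + |V|^2$ in general; this blocks any direct Parseval-style identity at the $L^4$ level and forces the componentwise splitting above. Fortunately, the pointwise orthogonality of $M(\xi)$ on $\mathbb{C}^2$ ensures the splitting costs only a universal multiplicative constant, which is harmless.
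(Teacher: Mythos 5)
Your proof is correct. The first half (diagonalizing via $M(\xi)$, splitting $u$ and $v$ into the pieces carried by $\ft{U}$ and $\ft{V}$, and using $|M_{ij}(\xi)|\le 1$ together with the triangle inequality in $L^4$) is exactly the reduction the paper indicates: it states that one must ``reduce the proofs for $(u,v)$ to those for the diagonal terms $(U,V)$.'' Where you genuinely diverge is in the scalar step. The paper (which omits the proof, referring to \cite{OHTHESIS}) says the remaining work is ``a straightforward modification of the argument in \cite{BO1},'' i.e.\ one reruns Bourgain's counting argument with the modified phases $d_j(\xi)$, the point being that the linear perturbation $-\frac{p\xi}{2}+(-1)^jL\xi$ does not affect the relevant level-set counts. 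You instead observe that $d_j(\xi)-\xi^3=a_j\xi$ with $a_j$ constant and eliminate it by a Galilean change of variables $x\mapsto x-a_jt$ on $\T_\ld$, which is an isometry of $L^4_{x,t}$ and converts $X^{0,\frac13}_j$ into the standard $X^{0,\frac13}$, so that Lemma \ref{L^4strichartz} applies as a black box. This is cleaner and avoids repeating any Fourier-analytic counting; the trade-off is that it exploits the special structure (linear-in-$\xi$ perturbation of the dispersion) and would not survive if the eigenvalue correction were genuinely nonlinear in $\xi$, whereas redoing Bourgain's argument is more robust. Two cosmetic points: your intermediate formula should read $\widehat{\wt f}(\xi,\tau)=\ft f(\xi,\tau+a_j\xi)$ for $\wt f(x,t)=f(x-a_jt,t)$ (the sign is immaterial to the conclusion), and your observation that the $\xi$-dependence of $M(\xi)$ blocks a pointwise Parseval identity at the $L^4$ level is exactly the right thing to flag --- the componentwise splitting at the cost of an absolute constant is the correct fix.
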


\noindent
The proof of Lemma \ref{LEM:XL4} immediately provides the following corollary for the diagonal terms $U_1$ and $U_2$ 
defined by 
$ (\ft{U}_1, \ft{U}_2)^T = M^{-1} (\ft{u}, \ft{v})^T.$

\begin{corollary} \label{COR:XL4}
 Let $X_j^{s, b}$ be as in \eqref{XXSB},  $j = 1, 2$.  Then, we have
\[
\| U_j \|_{L^4_{x, t}(\mathbb{T}_\ld \times \mathbb{R})} 
\lesssim \| U_j \|_{X^{0, \frac{1}{3}}_j (\mathbb{T}_\ld \times \mathbb{R})}
\leq \|(u, v) \|_{X^{0, \frac{1}{3}}_{p, q}(\mathbb{T}_\ld \times \mathbb{R})}, \  j = 1, 2. \]

\end{corollary}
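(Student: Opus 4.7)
The plan is to handle the two inequalities in the corollary separately, since the second is essentially a tautology while the first reduces to the scalar setting. For the second inequality $\|U_j\|_{X_j^{0, 1/3}} \leq \|(u,v)\|_{X^{0, 1/3}_{p, q}}$, the identity
\[
\|(u,v)\|_{X^{s, b}_{p, q}}^2 = \|U_1\|_{X_1^{s, b}}^2 + \|U_2\|_{X_2^{s, b}}^2
\]
established in Remark \ref{XREMARK2}, specialized to $(s, b) = (0, \tfrac{1}{3})$, makes the claim immediate: each summand on the right is bounded by the full sum.

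For the first inequality $\|U_j\|_{L^4_{x,t}} \lesssim \|U_j\|_{X_j^{0, 1/3}}$, I would reduce to the scalar KdV $L^4$ Strichartz estimate of Lemma \ref{L^4strichartz} via a Galilean change of variables. From \eqref{XEIGENVALUE}, the dispersion relation for $U_j$ is $\tau = d_j(\xi) = \xi^3 + \beta_j \xi$, where $\beta_j = -\tfrac{p}{2} + (-1)^j L$. Setting $W_j(x, t) := U_j(x - \beta_j t, t)$, a direct computation yields $\ft{W_j}(\xi, \tau) = \ft{U_j}(\xi, \tau + \beta_j \xi)$, so that after the substitution $\tau' = \tau + \beta_j \xi$ one obtains
\[
\|W_j\|_{X^{0, 1/3}(\T_\ld \times \R)} = \big\| \jb{\tau' - d_j(\xi)}^{1/3} \ft{U_j}(\xi, \tau') \big\|_{L^2_{\xi, \tau'}} = \|U_j\|_{X_j^{0, 1/3}(\T_\ld \times \R)}.
\]
The shift $(x, t) \mapsto (x - \beta_j t, t)$ is measure preserving on $\T_\ld \times \R$, hence $\|W_j\|_{L^4_{x,t}} = \|U_j\|_{L^4_{x,t}}$. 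Applying the first half of Lemma \ref{L^4strichartz} to $W_j$ then yields the desired estimate with the same implicit constant.

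Nothing in the argument genuinely goes beyond the proof of Lemma \ref{LEM:XL4}, since that proof already diagonalizes $(u,v)$ into $(U_1, U_2)$ and applies the Bourgain $L^4$ estimate to each component along its own curve $\tau = d_j(\xi)$ after the Galilean reduction. The only routine point to verify is that the implicit constant inherited from Lemma \ref{L^4strichartz} retains the non-increasing dependence on $\ld$ advertised in Lemma \ref{LEM:XL4}; this is automatic because the spatial period $\ld$ is unaffected by the substitution. I do not anticipate any real obstacle: the corollary is essentially a repackaging of the scalar ingredient feeding the proof of Lemma \ref{LEM:XL4}.
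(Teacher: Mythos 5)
Your proof is correct, and it is more explicit than what the paper offers: the paper simply asserts that the corollary is ``immediately provided'' by the proof of Lemma \ref{LEM:XL4}, whose own proof is omitted as ``a straightforward modification of the argument in \cite{BO1}'' --- i.e.\ the reader is expected to rerun Bourgain's $L^4$ counting argument for each perturbed dispersion curve $\tau = d_j(\xi)$ after diagonalizing. Your treatment of the second inequality via Remark \ref{XREMARK2} is exactly the paper's intended reading. For the first inequality you instead exploit the specific structure $d_j(\xi) = \xi^3 + \beta_j\xi$ with $\beta_j = -\tfrac{p}{2} + (-1)^j L$ constant in $\xi$ (which holds here because $A(\xi) = \xi^3 I + \xi B$ with $B$ independent of $\xi$, so the eigenvalues are exactly cubic-plus-linear), and reduce to the already-stated scalar Lemma \ref{L^4strichartz} by the Galilean boost $W_j(x,t) = U_j(x-\beta_j t, t)$; the computation $\ft{W_j}(\xi,\tau) = \ft{U_j}(\xi, \tau + \beta_j\xi)$ and the substitution $\tau' = \tau + \beta_j\xi$ do convert $\jb{\tau - \xi^3}$ into $\jb{\tau' - d_j(\xi)}$ while preserving both the $L^4_{x,t}$ norm and the $\ld$-dependence of the constant, since the shear is measure preserving on $\T_\ld \times \R$ for each fixed $t$. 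What your route buys is that no part of Bourgain's argument needs to be revisited for the perturbed phase, and the implicit constant is literally inherited from the unperturbed case; what it costs is that it relies on the lower-order term being exactly linear in $\xi$, whereas a direct modification of the counting argument (the paper's implicit route) would tolerate more general lower-order perturbations.
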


\noindent
Lastly, we state the crucial bilinear estimate for establishing the local well-posedness result.
\begin{proposition} \label{PROP:XBILINEAR}
Let $\ld \geq 1$. 
Let $B(\cdot, \cdot)$ be a bilinear operator defined by
\begin{equation} \label{XBILINEAROP}
B( \vec{u}, \vec{v}) = B \left(
\left( \begin{smallmatrix} u_1 \\ u_2 \end{smallmatrix} \right),
\left( \begin{smallmatrix}  v_1 \\  v_2 \end{smallmatrix} \right)
\right)
= \left(  \tfrac{1}{2}u_2 v_2, \tfrac{1}{2} (u_1 v_2 + u_2 v_1) \right).
\end{equation}

\noindent
Then,  for mean 0 functions $\vec{u}$ and $\vec{v}$ on $\mathbb{T_\ld} \times \mathbb{R}$, we have,
for $s \geq - \frac{1}{2}$,
\begin{equation*} \label{XBILINEAR}
 \big\|  \dx \big( B( \vec{u}, \vec{v}) \big) \big\|_{Z_{p, q}^{s}} \lesssim \ld^{0+} 
\| \vec{u} \|_{X^{s, \frac{1}{2}}_{p, q} }\| \vec{v} \|_{X_{p, q}^{s, \frac{1}{2}}}. 
\end{equation*}

\end{proposition}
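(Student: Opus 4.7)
The plan is to diagonalize the vector-valued spaces, reducing Proposition \ref{PROP:XBILINEAR} to a finite family of scalar bilinear estimates of KdV type attached to the eigen-dispersions $d_j(\xi)$, and then to adapt the proof of the classical periodic KdV estimate \eqref{KPVZbilinear}. The starting observation is the decomposition
\[A(\xi) - \xi^3 I = \xi N, \qquad N := \mtx{0}{-q}{-q}{-p},\]
which shows that for $\xi \ne 0$ the eigenvectors of $A(\xi)$ coincide with those of the fixed matrix $N$. Hence on every nonzero Fourier mode $M(\xi) \equiv M_0$ is a fixed orthogonal matrix, and since $\vec u, \vec v$ are mean zero this covers their entire spectral support. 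Writing $\col{\ft{U_1}}{\ft{U_2}} = M_0^{-1}\col{\ft{u_1}}{\ft{u_2}}$ and analogously $\col{\ft{V_1}}{\ft{V_2}} = M_0^{-1}\col{\ft{v_1}}{\ft{v_2}}$, Remark \ref{XREMARK2} yields $\|\vec u\|_{X^{s,1/2}_{p,q}}^2 = \sum_{j=1}^2 \|U_j\|_{X^{s,1/2}_j}^2$ and an analogous orthogonal splitting for $\|\cdot\|_{Z^s_{p,q}}$.

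Expanding \eqref{XBILINEAROP} in the diagonal basis, each scalar component of $M_0^{-1}\partial_x B(M_0\vec U, M_0\vec V)$ is a finite linear combination, with entries of $M_0$ as coefficients, of terms $\partial_x(U_i V_j)$, $i,j \in \{1,2\}$. Consequently Proposition \ref{PROP:XBILINEAR} reduces to proving, for all $i, j, k \in \{1,2\}$ and $s \geq -\tfrac12$, the scalar bilinear estimate
\[\|\partial_x(U_i V_j)\|_{Z^s_k(\T_\ld \times \R)} \lesssim \ld^{0+} \|U_i\|_{X^{s,1/2}_i}\|V_j\|_{X^{s,1/2}_j},\]
where $Z^s_k$ is the scalar $X$-type space with weight $\jb{\tau - d_k(\xi)}^{-1}$ and the corresponding $L^2(d\xi^\ld, L^1_\tau)$ piece.

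The driving algebraic identity, replacing \eqref{P1algebra}, is
\[d_k(\xi) - d_i(\xi_1) - d_j(\xi_2) = 3\xi\xi_1\xi_2 + \mu_k\xi - \mu_i\xi_1 - \mu_j\xi_2, \qquad \xi = \xi_1 + \xi_2,\]
with $\mu_j = -p/2 + (-1)^j L$ denoting the linear-in-$\xi$ correction of $d_j$ in \eqref{XEIGENVALUE}. When $i=j=k$, the linear perturbation vanishes by $\xi = \xi_1 + \xi_2$ and this collapses to the exact KdV identity $3\xi\xi_1\xi_2$, so the scalar estimate follows verbatim from the KdV argument of \cite{KPV4,CKSTT4} applied to the dispersion $d_k$, the proof of which only uses the cubic identity and $\tau$-translation invariance. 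For off-diagonal $(i,j,k)$, the linear perturbation simplifies (again using $\xi = \xi_1 + \xi_2$) to one of $\pm 2L\xi$, $\pm 2L\xi_1$, or $\pm 2L\xi_2$, and in particular is $O(\jb{\xi} + \jb{\xi_1} + \jb{\xi_2})$, hence absorbed by the cubic gain $|3\xi\xi_1\xi_2|$ as soon as the frequencies exceed a threshold depending only on $p,q$. The remaining finitely many low-frequency modes are estimated trivially by H\"older, Lemma \ref{XEMBED2}, and Corollary \ref{COR:XL4}.

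The main technical obstacle will be controlling the $L^2(d\xi^\ld, L^1_\tau)$ component of $Z^s_k$, treated in the scalar KdV case via the cardinality bound of Lemma \ref{closetocubic11} on the near-resonance set. For the perturbed dispersion $d_k$, for fixed $\xi$ the map $\xi_1 \mapsto d_k(\xi) - d_i(\xi_1) - d_j(\xi - \xi_1)$ is still a quadratic polynomial in $\xi_1$ with leading coefficient $-3\xi$ (the linear corrections only shift its constant and linear terms), so the three-case analysis of Lemma \ref{closetocubic11}, depending on whether $\xi_1$ is near the vertex of this parabola or far from it, goes through with identical bookkeeping to yield the $O(\ld M^{2/3})$ bound. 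Plugging this into the Cauchy-Schwarz in $\tau$ estimate of Part 2 of the proof of Proposition \ref{PROP:bilinear1}, together with Lemmas \ref{XEMBED1}, \ref{XEMBED2}, and \ref{LEM:XL4}, closes the argument at $s = -\tfrac12$.
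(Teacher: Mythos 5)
Your proposal is correct and follows essentially the same route as the paper's proof: diagonalize via the orthogonal matrix $M(\xi)$, reduce to scalar bilinear estimates for the components $U_j, V_k$ with dispersions $d_j(\xi)$, exploit the identity $d_k(\xi)-d_i(\xi_1)-d_j(\xi_2)=3\xi\xi_1\xi_2+O\big(L(|\xi|+|\xi_1|+|\xi_2|)\big)$ with the perturbation absorbed by the cubic gain above a frequency threshold $\max(L_1/\ld,1)$, and control the $L^2_\xi L^1_\tau$ piece by the quadratic-in-$\xi_1$ counting lemma. Your observation that $A(\xi)=\xi^3 I+\xi N$ forces $M(\xi)$ to be a single constant orthogonal matrix on all nonzero modes is a correct streamlining the paper does not state explicitly; the only imprecision is describing the below-threshold regime as ``finitely many modes estimated trivially,'' whereas the case of one small and two large frequencies is infinite, requires $|\xi_{\min}|\ge 1/\ld$ together with $L=L_1/\ld^2$ to rule out the zero of $3\xi_i\xi_j\pm 2L$, and is handled by the non-trivial $L^4_tL^\infty_x$ and $L^2_tL^\infty_x$ estimates of Lemma \ref{XEMBED2} exactly as in the $s=-\frac12$ KdV endpoint.
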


Like other lemmata, in proving Lemma \ref{LEM:XL4} and Proposition \ref{PROP:XBILINEAR}, 
we need to reduce the proofs for $(u, v)$ to those for the diagonal terms $(U, V)$ given by \eqref{XDIAGONAL} 
so that we can assume that $\ft{U}$  are $\ft{V}$ nonnegative.
Then, the main modification comes from estimating the sum and difference of the eigenvalues $d_1(\xi)$ and $d_2(\xi)$ of $A(\xi)$
and their scaling property.
To illustrate this, we  briefly discuss the proof of Proposition \ref{PROP:XBILINEAR} following 
Colliander-Keel-Staffilani-Takaoka-Tao \cite{CKSTT4}.

For simplicity, let $s = -\frac{1}{2}$.
First, consider the $X^{-\frac{1}{2}, -\frac{1}{2}} $ part of the $Z^{-\frac{1}{2}} $ norm.
By duality and an integration by parts, 
it suffices to prove
\begin{equation} \label{XDUALBILINEAR}
 \bigg|  \iint \Big[  \big(B( \vec{u}, \vec{v})\big) (x, t), \ \vec{w} (x, t) \Big]_{\mathbb{R}^2} dx dt \bigg| 
\lesssim \ld^{0+}  \| \vec{u} \|_{X^{-\frac{1}{2},\frac{1}{2}}}  \| \vec{v} \|_{X^{-\frac{1}{2},\frac{1}{2}}}
\| \vec{w} \|_{X^{-\frac{1}{2},\frac{1}{2}}} 
\end{equation}

\noindent
for all $\vec{u}, \vec{v}, \vec{w} $ with the spatial mean 0.
Let
$\ft{\vec{U}} = M^{-1} \ft{\vec{u}}$, 
$\ft{\vec{V}} =$ $ M^{-1} \ft{\vec{v}} $, 
and $\ft{\vec{W}} $ $= M^{-1} \ft{\vec{w}}$. 
By Remark \ref{XREMARK2}, we know that $\| \vec{u} \|_{X^{-\frac{1}{2},\frac{1}{2}}}$,  
$\| \vec{v} \|_{X^{-\frac{1}{2},\frac{1}{2}}}$, and $ \| \vec{w} \|_{X^{-\frac{1}{2},\frac{1}{2}}} $
are defined in terms of $(|\ft{U_1}|, |\ft{U_2}|)$, $(|\ft{V_1}|, |\ft{V_2}|)$, and$(|\ft{W_1}|, |\ft{W_2}|)$.
Therefore, without loss of generality, we can assume that the components of 
$\ft{\vec{U}}$, $\ft{\vec{V}}$, and $\ft{\vec{W}}$ are all nonnegative.
Moreover, since $\vec{u}, \vec{v},$ and $\vec{w} $ have mean 0 in $x$, we assume $\xi_1, \xi_2, \xi_3 \ne 0$.
The left hand side of \eqref{XDUALBILINEAR} can be written as 
\begin{align*} 
 \bigg| \iintt_{\substack{
\tau_1 + \tau_2 + \tau_3  = 0 \\
\xi_1 + \xi_2 + \xi_3 = 0
}}
\Big[  B\big( M(\xi_1)\ft{\vec{U}}(\xi_1, \tau_1), M(\xi_2)\ft{\vec{V}}(\xi_2, \tau_2) \big), 
\ M(\xi_3) \ft{\vec{W}} (\xi_3, \tau_3) \Big]_{\mathbb{R}^2}  d\xi_2^\ld d\xi_3^\ld d\tau_2 d\tau_3  \bigg| 
\end{align*}

\noindent
With $  M(\xi) = \mtx{\mu_1(\xi)}{\mu_2(\xi)}{\mu_3(\xi)}{\mu_4(\xi)}$, we have
\[=\bigg| \iintt_{\substack{
\tau_1 + \tau_2 + \tau_3  = 0 \\
\xi_1 + \xi_2 + \xi_3 = 0
}}
\frac{1}{2} \sum_{j, k, l = 1}^2 C_{j,k,l}(\xi_1, \xi_2, \xi_3)\ft{U_j}(\xi_1, \tau_1) \ft{V_k}(\xi_2, \tau_2) \ft{W_l}(\xi_3, \tau_3) 
d\xi_2^\ld d\xi_3^\ld d\tau_2 d\tau_3 \bigg|, \]

\noindent 
where
\begin{align*}
C_{j, k, l}(\xi_1, \xi_2, \xi_3)  & = \mu_j (\xi_1) \mu_{k+2}(\xi_2)\mu_{l+2}(\xi_3) \\
 & + \mu_{j+2}(\xi_1) \mu_k (\xi_2) \mu_{l+2}(\xi_3)  
 + \mu_{j+2}(\xi_1) \mu_{k+2}(\xi_2)\mu_l(\xi_3).  
\end{align*}

\noindent 
Since $M(\xi)$ is orthogonal, we have
$ |\mu_1(\xi)|^2 + |\mu_3(\xi)|^2 
= |\mu_2(\xi)|^2 + |\mu_4(\xi)|^2 = 1$.
In particular, we have $ |\mu_j(\xi)| \leq 1$ for all $\xi \in \mathbb{Z}/\ld$ and $  j = 1, 2, 3, 4 $.  
Thus, $|C_{j, k, l}(\xi_1, \xi_2, \xi_3)| \leq 3$ for all $\xi_1, \xi_2, \xi_3 \in \mathbb{Z}/\ld$ and $j, k, l = 1, 2$.
Hence, it is enough to prove, with $\xi_1, \xi_2, \xi_3 \neq 0$, 
\begin{align*} 
\iintt_{\substack{
\tau_1 + \tau_2 + \tau_3  = 0 \\
\xi_1 + \xi_2 + \xi_3 = 0
}}
 & \ft{U_j}(\xi_1, \tau_1) \ft{V_k}(\xi_2, \tau_2) \ft{W_l}(\xi_3, \tau_3)
 d\xi_2^\ld d\xi_3^\ld d\tau_2 d\tau_3  
 \lesssim \ld^{0+}  \| \vec{u} \|_{X^{-\frac{1}{2},\frac{1}{2}}}  
\| \vec{v} \|_{X^{-\frac{1}{2},\frac{1}{2}}}  \| \vec{w} \|_{X^{-\frac{1}{2},\frac{1}{2}}} ,
\end{align*}

\noindent 
for $ j, k, l = 1, 2$.
Recall that $d_1(\xi)$ and $d_2(\xi)$ are given by \eqref{XEIGENVALUE}
where $ L = \frac{1}{2} \sqrt{ p^2+ 4q^2}.$
Now, let $p_1$ and $ q_1$ denote the means of $u_0$ and $v_0$ for the original periodic problem on $[0, 2 \pi)$, i.e. with $ \ld = 1$.
Let $ L_1 = \frac{1}{2}  \sqrt{ p_1^2+ 4q_1^2}.$  
Then, by the scaling property discussed, 
we have $L = L_1 / \ld^2$.
Note that $L_1$ is really the constant given by each given initial value problem on $ [0, 2 \pi ) $
and $L$ depends on $\ld$ as shown above.

\noindent 
\textbf{{$\bullet$ Case (1):}} $|\xi_1|, |\xi_2|, |\xi_3| \geq \max( L_1 / \ld, 1) $

Since $\xi_1 + \xi_2 + \xi_3 = 0$, we have
$3 \xi_1 \xi_2 \xi_3 = \xi_1^3 + \xi_2^3 + \xi_3^3. $
Thus we have
\[ \big| \sum_{m = 1}^3 \tau_m - d_{j_m}(\xi_m) \big| 
 = \big| 3\xi_1 \xi_2 \xi_3 + L \big( (-1)^{j} \xi_1 + (-1)^{k} \xi_2 + (-1)^{l} \xi_3 \big) \big| .\]

\noindent 
For $(j_1, j_2, j_3) = (1, 1, 1)$, we have 
$ \big| \sum_{m = 1}^3 \tau_m - d_{j_m}(\xi_m) \big| 
=  3 |\xi_1 \xi_2\xi_3| $.
For $(j_1, j_2, j_3) = (1, 1, 2)$, we separate into two cases. 
If $\xi_1\xi_2<0$, then $| 3 \xi_1 \xi_2 - 2L| \geq |\xi_1 \xi_2| $ since $L \geq 0$.
Otherwise, i.e. if $\xi_1\xi_2 > 0$, then we have
$| 3 \xi_1 \xi_2 - 2L|  = \xi_1 \xi_2 + 2(\xi_1\xi_2 - L) \geq \xi_1 \xi_2$.

\noindent
since  \[ \xi_1 \xi_2 - L \geq 
\begin{cases}
\frac{L_1^2}{\ld^2} - \frac{L_1}{\ld^2} \geq 0 & \text{ if } L_1 \geq 1\\
1 - \frac{L_1}{\ld^2} \geq 1 - L_1\geq 0 & \text{ if } L_1 < 1.
\end{cases}
\]

\noindent
Therefore, we have
$| \sum_{m = 1}^3 \tau_m - d_{j_m}(\xi_m) | 
 =   |\xi_3 |\, | 3 \xi_1 \xi_2 - 2L|  
 \geq  |\xi_1\xi_2\xi_3| $.
All the other cases follow in a similar way and thus we have
$\big| \sum_{m = 1}^3 \tau_m - d_{j_m}(\xi_m) \big| 
 \gtrsim | \xi_1 \xi_2 \xi_3|$
for $ j_1, j_2, j_3 = 1, 2 $. 
Then, the rest follows as in \cite{CKSTT4}, using H\"older and Corollary \ref{COR:XL4}.

\noindent
\textbf{{$\bullet$ Case (2):}} $|\xi_1|, |\xi_2|, |\xi_3| \leq \max( L_1 / \ld, 1) $

We have $\jb{ \xi_1}, \jb{ \xi_2}, \jb{ \xi_3} \lesssim \max(L_1/\ld, 1) \leq \max(L_1, 1) \lesssim 1$ in this case.
Thus, the argument from \cite{CKSTT4} directly applies here, using H\"older and Corollary \ref{COR:XL4}.

 Lastly, since $\xi_1 + \xi_2 + \xi_3 = 0$, we  consider the case when one of the frequencies 
is small and the other two are large. Without loss of generality, we assume $ |\xi_3| $ is small, i.e.

\noindent \textbf{{$\bullet$ Case (3):}} 
$ 0 < |\xi_3| \leq \max( L_1 / \ld , 1) \leq |\xi_1|, |\xi_2| $

In this case, we have $|\xi_1 \xi_2| - L \geq 0 $ as in Case (1).
Since $\xi_3 \in \mathbb{Z}/\ld \setminus \{0\}$, we have $|\xi_3| \geq 1 / \ld$.
Thus, we have
$ |\xi_1 \xi_3| - L \geq \frac{L_1}{\ld} \frac{1}{\ld} - \frac{L_1}{\ld^2} = 0 $
Similarly, we have $|\xi_2 \xi_3 | - L \geq 0 $.  
Hence by repeating the computation in Case (1), we have
\[ 1 \lesssim |\xi_3|^{-\frac{1}{2}} \frac{ \jb{ \tau_1 - d_j(\xi_1)}^\frac{1}{2} +\jb{ \tau_2 - d_k(\xi_1)}^\frac{1}{2}
+ \jb{ \tau_3 - d_l(\xi_3)}^\frac{1}{2}} { \jb{\xi_1}^\frac{1}{2}\jb{\xi_2}^\frac{1}{2}\jb{\xi_3}^\frac{1}{2}} 
\ \text{ for } j, k, l = 1, 2. \]

\noindent
Then, the rest follows as in \cite{CKSTT4} using Lemmata \ref{XEMBED1}  and   \ref{XEMBED2}.

Next, we consider the estimate for the $L^2_\xi L^1_\tau$ part of the $Z^{-\frac{1}{2}}$ norm.
The basic idea is to reduce the proof to the previous case by applying Cauchy-Schwarz in $\tau$.
(See \cite{CKSTT4} and the proof of Proposition \ref{PROP:bilinear1}.)
Then, the main issue is when
there is no contribution from $\jb{\tau_1 - d_j(\xi_1)}$ or $\jb{\tau_2 - d_k(\xi_2)}$.
Suppose
$\jb{\tau_1 - d_j(\xi_1)} , \jb{\tau_2 - d_k(\xi_2)} \ll \jb{\tau - d_l(\xi)}^{\frac{1}{100}}$.
Also, assume
\begin{equation} \label{XNOTSMALL}
\max( |\xi|,|\xi_1|, |\xi_2|), \med( |\xi|,|\xi_1|, |\xi_2|) \geq \max(L_1/\ld, 1).
\end{equation}

\noindent
Let
$\G_{j, k, l}^\xi(\xi_1)  =   - d_l(\xi)  + d_j(\xi_1) + d_k(\xi_2)$.
Then, in this case, we have
\[\tau  - d_l(\xi)  =\G_{j, k, l}^\xi(\xi_1) + o(\jb{\tau  - d_l(\xi)}^\frac{1}{100}) 
 = \G_{j, k, l}^\xi(\xi_1) + o\big(|\G_{j, k, l}^\xi(\xi_1)|^\frac{1}{100}\big).\]

\noindent 
Let $\Omega_{j, k, l} (\xi)$ be a set defined by
\begin{multline*}
 \Omega_{j, k, l} (\xi) = \big\{ \eta \in \mathbb{R}  : \eta = \G_{j, k, l}^\xi(\xi_1) + o(|\G_{j, k, l}^\xi(\xi_1)|^\frac{1}{100})\\
 \text{ for some } \xi_1, \xi_2 \in \mathbb{Z}/\ld \text{ with } \xi = \xi_1 + \xi_2 \text{, satisfying \eqref{XNOTSMALL}} \big\} .
\end{multline*} 

\noindent
Then, the rest follows as in  \cite{CKSTT4} and 
Part 2 in the proof of Proposition \ref{PROP:bilinear1} 
thanks to the following lemma.

\begin{lemma} \label{XCLOSETOCURVE}
Fix $\xi \in \mathbb{Z} / \ld  \setminus \{0\} $. Then, for all dyadic $M \geq 1$, we have
\begin{equation} \label{ZTHINSET}
\left| \Omega_{j, k, l} (\xi) \cap \big\{ | \eta| \sim M \big\} \right| \lesssim \ld^\frac{3}{2} M^\frac{2}{3}
\end{equation}
for $j, k, l = 1, 2$.
\end{lemma}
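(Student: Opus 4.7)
The plan is to follow the scheme of the proof of Lemma~\ref{closetocubic11}, replacing the cubic analysis there with a simpler \emph{quadratic} one, which is available here because in the $\alpha = 1$ setting of the appendix the cubic contributions in $\Gamma_{j,k,l}^\xi$ cancel.

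The first step is an explicit computation of $\Gamma_{j,k,l}^\xi(\xi_1) = -d_l(\xi) + d_j(\xi_1) + d_k(\xi_2)$. Using $d_m(\eta) = \eta^3 - (p/2)\eta + (-1)^m L\eta$, $\xi_2 = \xi - \xi_1$, and the identity $\xi_1^3 + \xi_2^3 = \xi^3 - 3\xi\xi_1\xi_2$, the $p$-linear terms cancel (since $\xi_1 + \xi_2 = \xi$) and one obtains
$$
\Gamma_{j,k,l}^\xi(\xi_1) = 3\xi\, \xi_1^2 + \bigl(-3\xi^2 + L[(-1)^j - (-1)^k]\bigr)\xi_1 + L[(-1)^k - (-1)^l]\xi.
$$
The crucial observation is that this is a quadratic polynomial in $\xi_1$ (with $\xi$ fixed) whose leading coefficient is exactly $3\xi$; the $L$-dependent contributions are only of lower order in $\xi_1$.

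With this algebraic reduction in hand, the argument of Lemma~\ref{closetocubic11} carries over. For each fixed $\xi_1$, the $\eta$'s of interest form a set of Lebesgue measure $\lesssim M^{\frac{1}{100}}$, and the condition $|\eta| \sim M$ forces $|\Gamma_{j,k,l}^\xi(\xi_1)| \sim M$. Thus the task reduces to counting $\xi_1 \in \Z/\ld$ with $|\Gamma_{j,k,l}^\xi(\xi_1)| \sim M$. Since $\Gamma_{j,k,l}^\xi$ is quadratic in $\xi_1$ with leading coefficient $3\xi$, a standard sublevel-set estimate (complete the square: absorb the linear term into a translation and bound the resulting set $\{|\xi_1 - \xi_0|^2 \lesssim M/|\xi|\}$) shows that $\{\xi_1 \in \R : c_0 M \leq |\Gamma_{j,k,l}^\xi(\xi_1)| \leq C_0 M\}$ is a union of at most four intervals of total length $\lesssim (M/|\xi|)^{1/2}$.

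Counting the lattice points of $\Z/\ld$ in this set yields $\lesssim \ld (M/|\xi|)^{1/2} + O(1)$ admissible $\xi_1$. Because $\xi \in \Z/\ld \setminus \{0\}$, we have $|\xi| \geq 1/\ld$, and hence $\ld (M/|\xi|)^{1/2} \leq \ld^{3/2} M^{1/2}$. Multiplying by the per-$\xi_1$ factor $M^{1/100}$ produces the claimed bound
$$
\bigl| \Omega_{j,k,l}(\xi) \cap \{|\eta|\sim M\} \bigr| \lesssim \ld^{3/2} M^{\frac{1}{2}+\frac{1}{100}} \leq \ld^{3/2} M^{2/3},
$$
using $M \geq 1$ and $\tfrac{1}{2}+\tfrac{1}{100} \leq \tfrac{2}{3}$. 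The restriction \eqref{XNOTSMALL} on $\xi_1, \xi_2$ can only shrink the counted set, so it is harmless. The only mildly delicate point is confirming that the sublevel-set bound really depends only on the leading coefficient $3\xi$ and not on the $L$-terms, which is a routine completing-the-square check; once that is in place, there is no genuine obstacle.
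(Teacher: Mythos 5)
Your proof is correct and follows exactly the route the paper intends: the paper omits its own argument, stating only that it is ``analogous to'' Lemma \ref{closetocubic11}, and your reconstruction — computing that $\G^\xi_{j,k,l}$ is a quadratic in $\xi_1$ with leading coefficient $3\xi$ (the $p$-terms cancelling and the $L$-terms being lower order), bounding the sublevel set by $\lesssim (M/|\xi|)^{1/2}$, and converting $|\xi|\geq 1/\ld$ into the extra $\ld^{1/2}$ relative to \eqref{omega11} — is precisely that analogue. The only cosmetic difference is that you replace the three-case monotonicity analysis of Lemma \ref{closetocubic11} by a single completing-the-square sublevel-set estimate, which is a harmless streamlining.
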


\noindent
We omit the proof of this lemma since the proof is analogous to those 
of Lemma 7.4 in \cite{CKSTT4} and Lemma \ref{closetocubic11}.

\end{document}